\newtheorem{theorem}{Theorem}[section]
\newtheorem{remark}[theorem]{Remark}
\newtheorem{lemma}[theorem]{Lemma}
\newtheorem{proposition}[theorem]{Proposition}
\newtheorem{corollary}[theorem]{Corollary}
\newtheorem{conjecture}[theorem]{Conjecture}
\newtheorem{claim}[theorem]{Claim}
\begin{document}
\textwidth 150mm \textheight 225mm
\title{The Erd\H{o}s-Gy\'{a}rf\'{a}s function with respect to Gallai-colorings
\thanks{Supported by the National Natural Science Foundation of China (No. 11871398)
and China Scholarship Council (No. 201906290174).}
}
\author{{Xihe Li$^{1,2}$, Hajo Broersma$^{2,}$\thanks{Corresponding author. Orcid id: https://orcid.org/0000-0002-4678-3210}, Ligong Wang$^{1}$}\\
{\small $^{1}$ School of Mathematics and Statistics,}\\ {\small Northwestern Polytechnical University, Xi'an, Shaanxi 710129, PR China}\\
{\small $^{2}$ Faculty of Electrical Engineering, Mathematics and Computer Science,}\\ {\small University of Twente, P.O. Box 217, 7500 AE Enschede, The Netherlands}\\
{\small E-mail: lxhdhr@163.com; h.j.broersma@utwente.nl; lgwangmath@163.com}}
\date{}
\maketitle
\begin{center}
\begin{minipage}{120mm}
\vskip 0.3cm
\begin{center}
{\small {\bf Abstract}}
\end{center}
{\small For fixed $p$ and $q$, an edge-coloring of the complete graph $K_n$ is said to be a $(p, q)$-coloring if every $K_p$ receives at least $q$ distinct colors. The function $f(n, p, q)$ is the minimum number of colors needed for $K_n$ to have a $(p, q)$-coloring. This function was introduced about 45 years ago, but was studied systematically by Erd\H{o}s and Gy\'{a}rf\'{a}s in 1997, and is now known as the Erd\H{o}s-Gy\'{a}rf\'{a}s function. In this paper, we study $f(n, p, q)$ with respect to Gallai-colorings, where a Gallai-coloring is an edge-coloring of $K_n$ without rainbow triangles. Combining the two concepts, we consider the function $g(n, p, q)$ that is the minimum number of colors needed for a Gallai-$(p, q)$-coloring of $K_n$. Using the anti-Ramsey number for $K_3$, we have that $g(n, p, q)$ is nontrivial only for $2\leq q\leq p-1$. We give a general lower bound for this function and we study how this function falls off from being equal to $n-1$ when $q=p-1$ and $p\geq 4$ to being $\Theta(\log n)$ when $q = 2$. In particular, for appropriate $p$ and $n$, we prove that $g=n-c$ when $q=p-c$ and $c\in \{1,2\}$, $g$ is at most a fractional power of $n$ when $q=\lfloor\sqrt{p-1}\rfloor$, and $g$ is logarithmic in $n$ when $2\leq q\leq \lfloor\log_2 (p-1)\rfloor+1$.

\vskip 0.1in \noindent {\bf Key Words}: \  Erd\H{o}s-Gy\'{a}rf\'{a}s function; Gallai-coloring; Ramsey theory \vskip
0.1in \noindent {\bf AMS Subject Classification (2020)}: \ 05C55; 05D10 }
\end{minipage}
\end{center}

\section{Introduction}
\label{sec:ch-introduction}

Let $p$ and $q$ be positive integers with $2\leq q\leq \binom{p}{2}$. An edge-coloring of the complete graph $K_n$ is said to be a {\it $(p, q)$-coloring} if every $K_p$ receives at least $q$ distinct colors. The function $f(n, p, q)$ is defined to be the minimum number of colors that are needed for $K_n$ to have a $(p, q)$-coloring. This function was first introduced by Erd\H{o}s and Shelah \cite{Erd1,Erd2}, but Erd\H{o}s and Gy\'{a}rf\'{a}s \cite{ErGy} were the first to study it in depth; it is now known as the Erd\H{o}s-Gy\'{a}rf\'{a}s function. This function generalizes the multicolored Ramsey number, since determining $f(n, p, 2)$ is equivalent to determining the Ramsey number of $K_p$.

In \cite{ErGy}, Erd\H{o}s and Gy\'{a}rf\'{a}s determined various thresholds for $f(n, p, q)$. In particular, they proved that $q={p \choose 2}-p+3$ is the smallest value of $q$ such that $f(n, p, q)$ is linear in $n$, and $q={p \choose 2}-\left\lfloor\frac{p}{2}\right\rfloor+2$ is the smallest value of $q$ such that $f(n, p, q)$ is quadratic in $n$. In \cite{CFLS2}, Conlon et al. proved that $q=p$ is the smallest value of $q$ such that $f(n, p, q)$ is polynomial in $n$.

The exact value of the Erd\H{o}s-Gy\'{a}rf\'{a}s function is very difficult to determine, even for some small values of $p$ and $q$. For example, the best known lower bound for $f(n, 4, 3)$ is $O(\log n)$ \cite{FoSu}, while the best until now upper bound is $e^{O(\sqrt{\log n})}$ \cite{Mub}. There is clearly a large gap between the lower and upper bound. On the other hand, some special cases of this function are closely related to other interesting problems. For example, $f(n,9,34)$ relates to a Tur\'{a}n type hypergraph problem posed by Brown, Erd\H{o}s and S\'{o}s \cite{BrES,ErGy}, $f(n,5,9)$ relates to sets containing no 3-term arithmetic progression \cite{Axe}, and $f(n,3,3)$ and $f(n,5,9)$ relate to some problems on properly colored complete graphs \cite{ErGy,Ros}. For more information on this function, we refer to \cite{AxFM,CFLS1,CFLS2,FiPS,PoSh,SaSe1,SaSe2} and Section 3.5.1 of \cite{CoFS}.

A graph with an edge-coloring is called {\it rainbow} if all its edges are colored differently. A {\it Gallai-$k$-coloring} is a $k$-edge-coloring of a complete graph $K_n$ without rainbow triangles (that is, every triangle receives at most two colors). In this paper, we investigate the Erd\H{o}s-Gy\'{a}rf\'{a}s function within the framework of Gallai-colorings. A Gallai-coloring of the complete graph $K_n$ is said to be a {\it Gallai-$(p, q)$-coloring} if every $K_p$ receives at least $q$ distinct colors. We define $g(n, p, q)$ to be the minimum number of colors that are needed for $K_n$ to have a Gallai-$(p, q)$-coloring. Clearly, we have $f(n,p,q)\leq g(n,p,q)$ if both functions are defined for these values of $n$, $p$ and $q$.

For studying $g(n, p, q)$ it is convenient to introduce the following function. For $1\leq q\leq \binom{p}{2}$, let $g^k_q(p)$ be the smallest positive integer $n$ such that every Gallai-$k$-coloring of $K_n$ contains a copy of $K_p$ receiving at most $q$ distinct colors. Restated, $g^k_q(p)-1$ is the largest positive integer $n$ such that there is a Gallai-$k$-coloring of $K_n$ in which every $K_p$ receives at least $q+1$ distinct colors, i.e., such that $g(n, p, q+1)\leq k$. Throughout the remainder of the paper, we concentrate on the function $g^k_q(p)$ and derive upper and lower bounds and some exact values for this function. We reflect on what these results on $g^k_q(p)$ imply for the function $g(n, p, q)$ in Section~\ref{sec:ch-remark}. It is worth noting that Erd\H{o}s introduced an analogue of the function $g^k_q(p)$ when he posed the problem on $f(n, p, q)$ in his original paper~\cite{Erd1}.

We first point out that $g^k_q(p)$ is nontrivial only for $1\leq q\leq p-2$ (equivalently, $g(n, p, q)$ is nontrivial only for $2\leq q\leq p-1$). When $q\geq p-1$, we can deduce $g^k_q(p)$ using the following anti-Ramsey result.

\noindent\begin{theorem}\label{th:antiR} {\normalfont (\cite{ErSS,GySi})}
At most $p-1$ colors can be used in any Gallai-coloring of $K_p$.
\end{theorem}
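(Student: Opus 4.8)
The plan is to prove the bound by induction on $p$, removing a single vertex at each step. For the base cases I would note that $K_2$ is a single edge using exactly one color (which equals $p-1$), and $K_3$ is a triangle that by the rainbow-triangle-free condition uses at most two colors (again $p-1$); these small cases anchor the induction.

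For the inductive step I would assume the statement for $K_{p-1}$ and take an arbitrary Gallai-coloring of $K_p$. I would fix a vertex $v$ and delete it, leaving a copy of $K_{p-1}$ whose induced coloring is still a Gallai-coloring, since deleting a vertex cannot create a rainbow triangle. The induction hypothesis then yields at most $p-2$ colors on $K_p - v$, so it remains only to show that the $p-1$ edges incident with $v$ contribute at most one color that is not already present in $K_p - v$.

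This last claim is the heart of the argument, and it is the only place where the rainbow-triangle-free hypothesis enters. I would argue by contradiction: suppose two edges $vu$ and $vw$ carried distinct \emph{new} colors $a$ and $b$, meaning neither occurs among the edges of $K_p - v$. The edge $uw$ lies inside $K_p - v$, so its color $c$ is an old color and hence differs from both $a$ and $b$; since also $a \neq b$, the triangle $vuw$ would receive three distinct colors, a rainbow triangle, contradicting the Gallai condition. Therefore the edges at $v$ add at most one new color, and the total number of colors is at most $(p-2)+1 = p-1$, completing the induction.

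I do not anticipate a serious obstacle: the entire proof reduces to the single triangle inspection in the new-color claim. The only points requiring mild care are confirming that vertex deletion preserves the Gallai property and cleanly separating ``new'' from ``old'' colors so that the count $(p-2)+1$ is neither an over- nor an undercount. A heavier alternative would be to invoke the Gallai partition structure theorem, but the vertex-peeling induction is entirely self-contained, so I would prefer it.
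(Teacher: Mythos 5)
Your proof is correct. Note that the paper itself does not prove this statement: it is quoted as a known anti-Ramsey result from the cited references \cite{ErSS,GySi}, so there is no in-paper proof to match line by line. That said, your argument is fully consistent with the paper's toolkit, and its crucial step is precisely the paper's Lemma~\ref{le:Gallai}: your claim that the edges at the deleted vertex $v$ contribute at most one color not already present on $K_p-v$ is exactly the statement $\left|C(v,V)\setminus C(V)\right|\leq 1$ with $V=V(K_p)\setminus\{v\}$, and your rainbow-triangle inspection is the same short contradiction argument the paper gives for that lemma. The paper even runs your vertex-peeling idea in the opposite direction (adding vertices one at a time, each bringing at most one new color) in Case~1 of the proof of Theorem~\ref{th:q=p-3}. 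Your base cases and the count $(p-2)+1=p-1$ are right, and the induced coloring of $K_p-v$ is trivially still a Gallai-coloring, so the induction closes. The alternative you mention, inducting via the Gallai partition (Theorem~\ref{th:Gallai}) and bounding the number of colors by $2+\sum_{i}(\left|V_i\right|-1)=2+p-m\leq p-1$ when $m\geq 3$, and by $1+(p-2)$ when $m=2$, also works and is the style of counting the paper uses repeatedly (for instance in the proof of Theorem~\ref{th:q=p-2}), but it requires the structure theorem plus a small case split on $m$, whereas your peeling argument needs only the rainbow-triangle-free condition.
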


\noindent\begin{corollary}\label{co:q=p-1}
For integers $k\geq 1$, $p\geq 3$ and $q\geq p-1$, there is no Gallai-$k$-coloring of $K_n$ in which every $K_p$ receives at least $q+1$ distinct colors. Thus $g^k_q(p)=p$ for $q\geq p-1$.
\end{corollary}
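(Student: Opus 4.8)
The plan is to derive everything directly from Theorem~\ref{th:antiR} together with the hereditary nature of the Gallai property. First I would record the elementary but crucial observation that the restriction of any Gallai-coloring of $K_n$ to a vertex subset $S$ is again a Gallai-coloring of $K_{|S|}$: deleting vertices cannot create a rainbow triangle, so the absence of rainbow triangles is preserved under passing to induced subgraphs. In particular, whenever $n\geq p$, every copy of $K_p$ sitting inside a Gallai-$k$-coloring of $K_n$ inherits a Gallai-coloring, and Theorem~\ref{th:antiR} then guarantees that this $K_p$ uses at most $p-1$ distinct colors.

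Next I would exploit the hypothesis $q\geq p-1$. Since this gives $q+1\geq p>p-1$, the upper bound from the previous step says that no copy of $K_p$ can possibly receive $q+1$ or more colors. Consequently, as soon as $n\geq p$ (so that at least one copy of $K_p$ exists), the defining requirement ``every $K_p$ receives at least $q+1$ distinct colors'' is violated by every copy of $K_p$, and hence there is no Gallai-$k$-coloring of $K_n$ meeting this requirement. This establishes the first assertion of the corollary.

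Finally I would translate this into the value of $g^k_q(p)$ using the characterization recorded just before the statement, namely that $g^k_q(p)-1$ is the largest $n$ for which some Gallai-$k$-coloring of $K_n$ has every $K_p$ receiving at least $q+1$ colors. For $n=p-1$ there is no copy of $K_p$ at all, so the requirement holds vacuously and any Gallai-$k$-coloring of $K_{p-1}$ (which exists since $k\geq 1$) witnesses feasibility; combined with the infeasibility for all $n\geq p$ proved above, the largest feasible $n$ is exactly $p-1$. Hence $g^k_q(p)-1=p-1$, i.e. $g^k_q(p)=p$.

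I do not expect a genuine obstacle here, as the statement is a direct corollary of Theorem~\ref{th:antiR}; the only points requiring care are the hereditary step (so that Theorem~\ref{th:antiR} applies to each $K_p$ embedded in $K_n$, and not merely to a stand-alone $K_p$) and the vacuous boundary case $n=p-1$, which is precisely what pins the value at $p$ rather than something larger.
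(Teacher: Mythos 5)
Your proposal is correct and follows exactly the route the paper intends: the corollary is stated as an immediate consequence of Theorem~\ref{th:antiR}, and your argument simply fills in the details the paper leaves implicit (the hereditary nature of Gallai-colorings under taking induced subgraphs, and the vacuous case $n=p-1$ that pins the value at $g^k_q(p)=p$). No gaps; the careful handling of the boundary case is exactly what makes the deduction complete.
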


Moreover, if $k< q$, then it is obvious that $g^k_q(p)=p$. In the sequel, we will always assume that $k\geq q$ and $1\leq q\leq p-2$ when we consider $g^k_q(p)$. Note that we have the following inequalities:
$$g^k_q(p)\leq g^{k+1}_q(p),\ \ g^k_{q+1}(p)\leq g^k_q(p)\ \ {\rm and\ \ } g^{k+1}_{q+1}(p)\leq g^k_q(p),$$
as we now explain. The first two inequalities hold by the definition of $g^k_q(p)$. For the third inequality, let $n_0=g^{k+1}_{q+1}(p)-1$. Then there exists a Gallai-($k+1$)-coloring $G$ of $K_{n_0}$ in which every $K_p$ receives at least $q+2$ colors. Let $G'$ be an edge-coloring of $K_{n_0}$ obtained from $G$ by unifying colors $k$ and $k+1$. Clearly $G'$ is a Gallai-$k$-coloring in which every $K_p$ receives at least $q+1$ colors. Thus $g^k_q(p)\geq n_0+1=g^{k+1}_{q+1}(p)$.

In \cite{FoGP}, Fox, Grinshpun and Pach proved the following asymptotic result. Note that for $k=3$ and $q=2$, this result is a special case of the multicolor generalization of the well-known Erd\H{o}s-Hajnal conjecture.

\noindent\begin{theorem}\label{th:general} {\normalfont (\cite{FoGP})}
Let $k$ and $q$ be fixed positive integers with $q\leq k$. Every Gallai-$k$-coloring of $K_n$ contains a set of order $\Omega (n^{\binom{q}{2}/\binom{k}{2}}\log^{c_{k,q}}_2 n)$ which uses at most $q$ colors, where $c_{k,q}$ is only depending on $k$ and $q$. Moreover, this bound is tight apart from the constant factor.
\end{theorem}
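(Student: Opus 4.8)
The engine of the whole argument is the classical Gallai structure theorem: every Gallai-coloring of $K_n$ with $n\ge 2$ admits a partition $V(K_n)=V_1\cup\cdots\cup V_m$ into $m\ge 2$ parts such that every pair of parts is joined by edges of a single color and the reduced complete graph $R$ on the $m$ parts uses at most two colors in total. The plan is to run a double induction on $k$ and $n$, at each step peeling off the (at most two) colors that the theorem exposes on $R$. The shape of the exponent $\binom{q}{2}/\binom{k}{2}$ signals that the correct complexity measure is not the number of colors but the number of unordered \emph{pairs} of colors that may co-occur: a clique spanning at most $q$ colors realizes at most $\binom{q}{2}$ of the $\binom{k}{2}$ possible pairs, and in the extremal regime these pairs are introduced roughly one per recursive level, so a clique confined to $q$ colors survives only a $\binom{q}{2}/\binom{k}{2}$ fraction of the levels. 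I would establish the $\Omega$ lower bound and the matching construction separately.

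For the lower bound I would track the quantity $R_{k,q}(n)$, the largest $t$ for which every Gallai-$k$-coloring of $K_n$ is guaranteed to contain a clique on $t$ vertices spanning at most $q$ colors. Applying the structure theorem and letting $a,b$ denote the (at most two) colors on the reduced graph $R$, there are two competing moves. The \textbf{deep} move recurses inside the largest part, preserving $k$ and $q$ but shrinking $n$ to roughly $n/m$. The \textbf{wide} move applies Ramsey's theorem to the $2$-colored $R$ to extract a monochromatic sub-collection of at least $\tfrac{1}{2}\log_2 m$ parts (in color $a$, say), selects inside each chosen part a clique using colors from one \emph{common} palette of size $q-1$, and glues these cliques through the single color $a$, so that the union uses at most $q$ colors. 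To force the internal palettes to agree I would carry the refined statistic ``largest clique using colors from a prescribed $q$-subset'' and average over the $\binom{k}{q}=O(1)$ choices of palette by pigeonhole. Balancing the $n\mapsto n/m$ loss of the deep move against the $\log m$ gain and the budget drop $q\mapsto q-1$ of the wide move, with the color-pair bookkeeping described above, yields a recursion whose solution is $n^{\binom{q}{2}/\binom{k}{2}}$ up to a power of $\log n$, each wide step paying a single logarithmic factor and hence producing the exponent $c_{k,q}$.

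For tightness I would exhibit a single Gallai-$k$-coloring in which every $\le q$-colored clique is small. The construction is an iterated substitution: fix a base gadget on $N=O_k(1)$ vertices, namely a balanced $2$-coloring into which smaller Gallai-colorings are substituted, arranged so that one \emph{new} color-pair is activated per level and the $\binom{k}{2}$ pairs are distributed as evenly as possible across the levels; then take the $t$-fold substitution power on $n=N^{t}$ vertices. A clique restricted to $q$ colors can descend only through those levels whose activated pair lies among its available $\binom{q}{2}$ pairs, so a charging argument caps its size at $O\!\left(n^{\binom{q}{2}/\binom{k}{2}}\right)$ up to the same polylogarithmic correction, matching the lower bound apart from the constant factor.

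The main obstacle, in both directions, is to pin down the exponent \emph{and} the polylogarithmic power $c_{k,q}$ simultaneously. On the lower-bound side this rests on the palette-consistency device and on balancing the deep and wide recursions so that neither the exponent nor the accumulated $\log$-factors leak away; on the construction side it rests on choosing the base gadget so that the per-level pair-activation is genuinely uniform, since any imbalance corrupts the exponent. Reconciling the two polylogarithmic powers into the same $c_{k,q}$ is the most delicate point, and is where I expect the real work to lie.
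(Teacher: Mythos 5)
A preliminary point: the paper you are working from does not prove this statement at all. Theorem 1.3 is quoted from Fox, Grinshpun and Pach \cite{FoGP} and used as a black box (the authors even explain why it is of limited use for their purposes). So your proposal can only be judged against the actual proof in \cite{FoGP} and on its own internal correctness.

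Judged that way, the lower-bound half of your sketch has a fatal gap: the two moves you allow can never produce a polynomial bound. Write $g_q(n)$ for your $R_{k,q}(n)$. Your deep move gives $g_q(n)\geq g_q(n/m)$, with no gain, and your wide move gives $g_q(n)\geq \tfrac{1}{2}(\log_2 m)\,g_{q-1}(n/m)$ (in fact with $g_{q-1}$ replaced by the smaller prescribed-palette quantity). Since the palette budget drops at every wide move, it can be invoked at most $q-1$ times along any branch of the recursion, and since $g_1(n)$ is genuinely polylogarithmic for Gallai colorings (iterated substitution of random $2$-colorings forces all monochromatic cliques to have polylog size), unfolding your recursion yields at most $(\log n)^{q-1}\cdot g_1(\cdot)=\mathrm{polylog}(n)$. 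Already for $k=3$, $q=2$ the truth is $\Theta(n^{1/3}\log^2 n)$, which no combination of ``preserve and shrink'' and ``pay a palette color for a log factor'' can reach. The move you are missing---and the heart of the argument in \cite{FoGP}---is the case where \emph{both} reduced-graph colors lie in the target palette $Q$: then one keeps \emph{all} $m$ parts rather than a Ramsey-selected $\tfrac{1}{2}\log_2 m$ of them, and the restricted-palette quantity becomes superadditive, $g_Q(n)\geq\sum_i g_Q(n_i)$; for balanced parts and a target of the form $n^c$ with $c\leq 1$ this gives $m(n/m)^c=m^{1-c}n^c\geq n^c$, which is exactly what sustains a polynomial exponent through the recursion. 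Fox, Grinshpun and Pach then need a weighted Ramsey lemma to control the levels whose join pair is \emph{not} contained in the palette, and the exponent $\binom{q}{2}/\binom{k}{2}$ emerges from averaging, over the $\binom{k}{q}$ palettes, the ``polynomial depth'' of the Gallai tree compatible with each palette. None of this mechanism is present in your outline; the pair-counting heuristic you state at the start is correct intuition, but your recursion does not implement it.

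The tightness half also has a quantitative flaw. With a base gadget of constant size $N=O_k(1)$ iterated $t\approx\log_N n$ times, a $q$-colored clique is still free, at each level whose activated pair is not inside its palette, to occupy a monochromatic clique of the base coloring, of size roughly $2\log_2 N\geq 2$. Multiplying over the $\bigl(1-\binom{q}{2}/\binom{k}{2}\bigr)t$ such levels costs a factor $n^{\varepsilon}$ with $\varepsilon=\Theta(\log\log N/\log N)>0$ for every constant $N$, so the construction overshoots the target by a \emph{polynomial}, not polylogarithmic, factor. To be tight apart from constants one must do what \cite{FoGP} do: use a bounded number of substitution levels (essentially one per pair of colors), each a Ramsey-extremal $2$-coloring of polynomial size $n^{1/\binom{k}{2}}$, so that each ``foreign'' level costs only $O(\log n)$.
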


It is worth noticing that the problem studied by Fox, Grinshpun and Pach is to find the largest subgraph $K_p$ using at most $q$ colors in every Gallai-$k$-coloring of $K_n$, for fixed $k$ and $q$, when $n$ is sufficiently large. But in this paper, we mainly focus on the problem to determine the smallest $n$ such that there is a $K_p$ using at most $q$ colors in every Gallai-$k$-coloring of $K_n$, for fixed $p$ and $q$, when $k\in [1, +\infty)$ (or $k\to \infty$). Therefore, the above theorem cannot give us much support, since it requires that $n$ is sufficiently large, in fact, $$n\geq n_0= 2^{2^{2^{2^{8k^2}}}}.$$ But we can prove an upper bound of $2^{\frac{2k(p-2)}{q}+1}$ on $g^k_{q}(p)$ (see Theorem \ref{th:q} below). If $2^{\frac{2k(p-2)}{q}+1}\geq n_0$, then $k=o(p)$, which implies that for fixed $p$ and $q$, only $o(p)$ $g^k_{q}(p)$'s can be bounded using the above theorem.

\noindent\begin{theorem}\label{th:q}
For integers $p,q,k$ with $p\geq 3$, $1\leq q\leq p-2$ and $k\geq q$, we have $g^k_{q}(p)\leq 2^{\frac{2k(p-2)}{q}+1}$.
\end{theorem}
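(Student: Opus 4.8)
The plan is to build the desired $K_p$ greedily by repeatedly applying the classical structure theorem of Gallai: every Gallai-coloring of a complete graph on at least two vertices admits a partition of its vertex set into $m\ge 2$ parts (``blocks'') so that between any two blocks only one color appears and, over all pairs of blocks, at most two colors are used in total. I would process $K_n$ through a nested sequence of such partitions, maintaining a current block $B$ (initially $V(K_n)$) and a growing clique $S$. At each stage I take a Gallai partition of $B$, locate a monochromatic clique $\{R,Z_1,\dots,Z_{s-1}\}$ in its reduced two-coloring, say in color $\gamma$, pick one vertex from each $Z_j$, add these $s-1$ vertices to $S$, and recurse into $R$. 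The key point is that each newly chosen vertex is joined in the single color $\gamma$ to everything inside $R$ (hence to all vertices added in later stages) and to the other vertices chosen in the same batch; thus the whole batch contributes only the one color $\gamma$ to $S$.

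When the process terminates, $S$ is a clique whose edges are colored exactly by the sequence of batch-colors $\gamma_1,\gamma_2,\dots$, one color per batch. Writing $P$ for the total number of peeled vertices, I would run two independent bookkeeping arguments. For the colors: while Theorem~\ref{th:antiR} already limits how many colors can occur, the useful step is to restrict attention to the $q$ most frequently occurring batch-colors; since the peels are distributed among at most $k$ colors, these $q$ colors account for at least $\tfrac{q}{k}P$ of the peeled vertices, and the sub-clique they span uses at most $q$ colors. For the size: each stage that peels $s-1$ vertices should cost only a bounded factor in $|B|$, namely $|B|$ should drop by at most $4^{\,s-1}$, because a two-colored reduced graph on $M$ blocks always contains a monochromatic clique on roughly $\tfrac12\log_2 M$ blocks. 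Summing the logarithms of these factors over all stages then gives $n\le 4^{P}$, that is, $P\ge \tfrac12\log_2 n$.

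Combining the two estimates, the $q$ retained colors account for at least $\tfrac{q}{k}\cdot\tfrac12\log_2 n=\tfrac{q}{2k}\log_2 n$ peeled vertices. For $n\ge 2^{\,2k(p-2)/q+1}$ this lower bound equals $(p-2)+\tfrac{q}{2k}$, which strictly exceeds $p-2$; as it counts an integer number of vertices it is therefore at least $p-1$, and together with one final ``anchor'' vertex this furnishes a $K_p$ using at most $q$ colors, which is exactly $g^k_q(p)\le 2^{\,2k(p-2)/q+1}$. The integer rounding here is precisely what lets the ``$+1$'' in the exponent suffice instead of a larger constant. The step I expect to be the main obstacle is making the size estimate rigorous: at every stage I must guarantee a monochromatic clique that is simultaneously large (to harvest many peels) and contains a block $R$ heavy enough to recurse into (so that $|B|$ is not destroyed). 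Establishing this ``heavy monochromatic clique'' lemma --- a weighted Ramsey statement producing a monochromatic clique $\mathcal Q$ together with a block of weight at least $|B|/4^{\,|\mathcal Q|-1}$ --- is the technical heart, and calibrating it against the color-averaging so as to land on the precise exponent $\tfrac{2k(p-2)}{q}$ is the delicate part.
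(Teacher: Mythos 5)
Your color bookkeeping and the endgame are fine: the batches do span a clique whose edges use exactly the batch colors, the top-$q$ selection accounts for at least $\tfrac{q}{k}P$ peeled vertices using at most $q$ colors, and the integrality argument together with the final anchor vertex correctly converts $P\ge\tfrac12\log_2 n$ into a $q$-colored $K_p$. The gap is exactly where you predicted it, but it is not merely technical: the ``heavy monochromatic clique lemma'' you need is \emph{false}. Consider the $2$-coloring of $K_5$ in which color $1$ induces a $C_5$ and color $2$ induces the complementary $C_5$, and substitute into each of the five vertices a block of weight $|B|/5$. Both color classes of this reduced graph are triangle-free, so every monochromatic clique of blocks has $s=2$, while every block has weight $|B|/5<|B|/4^{s-1}=|B|/4$. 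Nor can this be escaped by choosing a different Gallai partition: if the colorings inside the five blocks use fresh colors, one checks that $\{V_1,\dots,V_5\}$ is the \emph{only} Gallai partition (any refinement or crossing part puts at least three colors between parts), and the same structure can recur inside every block. On such inputs your algorithm is forced to peel one vertex per stage with shrink factor exactly $5$, so the best guarantee available in your framework is $n\le 5^{P}$, i.e.\ $P\ge\log_5 n$, which calibrates to a bound of order $5^{k(p-2)/q}=2^{(\log_2 5)\,k(p-2)/q}$ and misses the claimed $2^{\frac{2k(p-2)}{q}+1}$, since $\log_2 5>2$. More generally, recursing into a single block of a monochromatic clique of a reduced $2$-coloring on $M$ equal blocks only yields $s\approx\tfrac12\log_2 M$, hence a per-peel factor $M^{1/(s-1)}$ that is always strictly above $4$.

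The paper proves the theorem without invoking Theorem~\ref{th:Gallai} at all. Its Lemma~\ref{le:K3} shows, by a convexity count of non-rainbow triangles, that every rainbow-triangle-free coloring of $K_n$ with $n\ge 4$ has a vertex $v$ and a color $\ell$ with $d_\ell(v)>n/4$; one then peels $v$ and recurses into the monochromatic neighborhood $N_\ell(v)$ (Lemma~\ref{le:reduce}). The crucial difference is that $N_\ell(v)$ need not respect any Gallai partition: in the blown-up pentagon, the color-$1$ neighborhood of a vertex of $V_1$ contains the two whole blocks $V_2\cup V_5$, of weight $2|B|/5>|B|/4$. This is what delivers the per-peel factor of exactly $4$ that your calibration requires. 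After that, the paper's bookkeeping over all $\binom{k}{q}$ color $q$-subsets (the quantities $\alpha(i)$ in its proof, with $\sum_i\alpha(i)\le k(p-2)/q$ by averaging) plays precisely the role of your top-$q$ selection. If you replace your Gallai-partition/weighted-Ramsey step by this single-vertex monochromatic-neighborhood step, the rest of your argument goes through essentially as written.
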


We postpone all proofs of our results to later sections. Note that Theorem \ref{th:q} implies that $g(n,p,q)>\frac{q-1}{2(p-2)}(\log_2 n -1)$, where $p\geq 3$, $2\leq q\leq p-1$ and $n\geq 2^{2p-3}$. In \cite{ErGy}, Erd\H{o}s and Gy\'{a}rf\'{a}s obtained an upper bound for $f(n, p, q)$ using the Lov\'{a}sz Local Lemma. However, it seems difficult to determine a nontrivial general upper bound for $g(n, p, q)$ (or, equivalently, lower bound for $g^k_q(p)$). Although we can prove some nontrivial results (see, for example Proposition \ref{prop:sp} below) using the Lov\'{a}sz Local Lemma, it cannot help us much in determining an upper bound for $g(n, p, q)$. A graph with an edge-coloring is called $q$-{\it colored} if its edges are colored with at most $q$ distinct colors.

\noindent\begin{proposition}\label{prop:sp} For fixed integers $s,$ $q,$ $k,$ and appropriately large integer $p$ with $s\geq 4$ and $k\geq \max \left\{\binom{s}{2}, 2q+1\right\}$, there exists a $k$-edge-coloring of $K_n$ with
$$n=\left(\frac{(s-2)pL^{1/\left(1-\binom{s}{2}\right)}}{(c+o(1))\left(\binom{s}{2}-2.1\right) \ln \left(pL^{1/\left(1-\binom{s}{2}\right)}\right)}\right)^{\left(\binom{s}{2}-2.1\right)/(s-2)}$$
such that there is neither a rainbow $K_s$ nor a $q$-colored $K_p$, where $c$ is a constant and $L=\binom{s}{2}(k-1)^{2-\binom{s}{2}}(k-2)\cdots\left(k-\binom{s}{2}+1\right)$.
\end{proposition}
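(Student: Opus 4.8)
The plan is to use the probabilistic method, but the two requirements pull in opposite directions and must be handled by different mechanisms: avoiding a $q$-colored $K_p$ wants the $k$ colors spread out, whereas avoiding a rainbow $K_s$ wants colors to repeat inside every small clique. The first instinct---color each edge of $K_n$ independently and uniformly from $[k]$ and feed both families of bad events into the Lov\'{a}sz Local Lemma---disposes of the $q$-colored $K_p$ events with ease, since the probability that a fixed $K_p$ uses at most $q$ colors is at most $\binom{k}{q}(q/k)^{\binom{p}{2}}$, which is super-exponentially small in $p$ (here $k\geq 2q+1$ forces $q/k<\tfrac12$). However, a fixed $K_s$ is rainbow with probability $\frac{k(k-1)\cdots(k-\binom{s}{2}+1)}{k^{\binom{s}{2}}}$, a \emph{positive constant} independent of $n$, while each such event is entangled with $\Theta(n^{s-2})$ others through shared edges; the Local Lemma inequality for this family then forces $n$ to be bounded. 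Hence uniform coloring cannot by itself destroy the rainbow copies, and I would instead forbid them \emph{deterministically} through a structured skeleton, reserving the randomness for the $q$-colored $K_p$ condition.

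For the skeleton, I would recursively partition the vertex set into $s-1$ classes and keep the color of an edge constant on each unordered pair of classes, recursing the same construction inside each class. Any $K_s$ either lies inside one class---where induction applies---or meets at least two classes; in the latter case, since $s$ vertices cannot be spread over $s-1$ classes without a collision, some class holds two of them, say $a,b$, and for any third vertex $c$ in another class the edges $ac$ and $bc$ lie between the same pair of classes and receive the same color. Thus no $K_s$ is rainbow, robustly so regardless of which palette colors are later assigned to the slots. The number of color-roles attached to a single $K_s$ in this scheme is of order $\binom{s}{2}$ (the $\binom{s-1}{2}$ cross-class pairs together with the $s-1$ within-class recursions), which is exactly why the hypothesis $k\geq\binom{s}{2}$ is natural and is the combinatorial origin of the constant $L$.

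With the skeleton fixed, I would choose the color of each (recursion-node, class-pair) slot randomly from $[k]$ and apply the Local Lemma to the single family ``this $K_p$ sees at most $q$ colors.'' The probability of such an event is at most $\binom{k}{q}(q/k)^{t}$, where $t$ is the number of independent slots the $K_p$ meets, and the dependency degree is polynomial in $n$; balancing these and solving for the largest admissible $n$ should produce the stated fractional-power bound, with the logarithm in the denominator appearing, in the spirit of the Erd\H{o}s--Gy\'{a}rf\'{a}s Local Lemma computation, from taking the logarithm of the reciprocal probability, and the constant $2.1$ being a slight weakening of the natural value $2$ that absorbs the lower-order $o(1)$ terms. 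The main obstacle I anticipate is the \emph{worst-case} $K_p$: a clique concentrated so as to align with the skeleton and therefore meet as few independent random slots as possible. Lower-bounding $t$ uniformly over all such cliques, and then tuning the number of classes and the recursion depth against the resulting Local Lemma condition, is the delicate step, and it is precisely this balance that pins down the exponent $(\binom{s}{2}-2.1)/(s-2)$.
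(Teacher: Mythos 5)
Your diagnosis of why the naive uniform coloring fails is exactly right (a fixed $K_s$ is rainbow with constant probability while it conflicts with $\Theta(n^{s-2})$ other copies), but the fix you propose does not work, and the obstruction is precisely the "worst-case $K_p$" you defer to the end. In your skeleton the colors of \emph{all} edges of a given $K_p$ are determined by the slots it meets, and every $K_p$ meets only $O(p)$ slots: each slot lives at a branching node of the subtree of the recursion tree spanned by the clique's $p$ vertices, a tree with $p$ leaves has at most $p-1$ branching nodes, and each such node carries at most $\binom{s-1}{2}$ slots, so $t\leq \binom{s-1}{2}(p-1)$ for every clique (with $t=p-1$ attained by cliques that split two ways at every level). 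Hence no bad event has probability below roughly $(q/k)^{O(p)}$ --- exponentially small in $p$, not in $p^2$. At the same time, any two cliques that both cross the top-level partition share a top-level slot, so the dependency degree of a single event (equivalently, the number of essentially distinct events one must beat) is of order $n^{\Theta(p)}$. Every form of the Local Lemma or union bound then requires $(q/k)^{O(p)}\cdot n^{\Theta(p)}<1$, which forces $n=O(p)$. But the proposition asserts $n=\Theta\bigl((p/\ln p)^{(\binom{s}{2}-2.1)/(s-2)}\bigr)$, and the exponent $\bigl(\binom{s}{2}-2.1\bigr)/(s-2)$ is at least $1.95$ for all $s\geq 4$, i.e.\ $n$ must be super-linear in $p$. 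Collapsing the edge-level randomness onto shared slots destroys exactly the $p^2$-versus-$p$ advantage the argument needs; this is not a delicate tuning issue but a hard cap of the approach.

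The paper's resolution is different in kind: it keeps independent randomness on every edge and instead \emph{biases} the color distribution. Each edge gets color $k$ with probability $1-r$ and each of the other $k-1$ colors with probability $r/(k-1)$, where $r\to 0$. A rainbow $K_s$ can contain at most one edge of the dominant color, so $\Pr(A_S)\leq L\, r^{\binom{s}{2}-1}\to 0$, curing precisely the constant-probability problem you identified --- and this computation is where $L$ actually comes from (a falling-factorial bound on the rainbow probability, not a count of skeleton color-roles; likewise $k\geq\binom{s}{2}$ merely keeps that falling factorial positive, while $k\geq 2q+1$ is what bounds the $q$-colored-$K_p$ probability by $\binom{k}{q}(1-r/2)^{\binom{p}{2}}$, which remains super-exponentially small in $p$ provided $rp^2\gg p\ln n$). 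Spencer's weighted form of the Local Lemma is then applied to both families at once, with weight $1+\epsilon$ on rainbow events and weight $\exp\bigl(\Theta(n^{\alpha}(\ln n)^2L^{\beta})\bigr)$ on clique events, under the parameter choice $r=c_1n^{-\alpha}L^{-\beta}$, $p=c_2n^{\alpha}(\ln n)L^{\beta}$ with $\alpha=(s-2)/\bigl(\binom{s}{2}-2.1\bigr)$ and $\beta=1/\bigl(\binom{s}{2}-1\bigr)$; the stated exponent and the constant $2.1$ come from this balance, not from tuning a recursion depth.
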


When $q=1$, $g^k_1(p)$ is the smallest positive integer $n$ such that every Gallai-$k$-coloring of $K_n$ contains a monochromatic copy of $K_p$. Fox, Grinshpun and Pach \cite{FoGP} posed the following conjecture, which was verified independently by Chung and Graham \cite{ChGr} and Gy\'{a}rf\'{a}s et al. \cite{GSSS} for $p=3$, and by Liu et al. \cite{LMSSS} for $p=4$, using the language of Gallai-Ramsey numbers.

\noindent\begin{conjecture}\label{conj:Kt} {\normalfont (\cite{FoGP})}
For integers $k\geq 3$ and $p\geq 3$,
$$g^k_1(p)=
\left\{
   \begin{aligned}
    &(R_2(K_p)-1)^{k/2}+1, & & \mbox{if $k$ is even},\\
    &(p-1)\cdot(R_2(K_p)-1)^{(k-1)/2}+1, & & \mbox{if $k$ is odd},
   \end{aligned}
   \right.$$
where $R_2(K_p)$ is the 2-colored Ramsey number for $K_p$.
\end{conjecture}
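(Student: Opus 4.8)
The plan is to prove matching lower and upper bounds for $g^k_1(p)$. Writing $R=R_2(K_p)-1$ and $N_k=g^k_1(p)-1$ for the largest order of a Gallai-$k$-coloring of a complete graph with no monochromatic $K_p$, the claimed formula is equivalent to $N_{2t}=R^t$ and $N_{2t+1}=(p-1)R^t$. Both expressions obey the single recursion $N_k=R\cdot N_{k-2}$ with base values $N_1=p-1$ and $N_2=R$, so I would establish the conjecture by proving the lower bound $N_k\ge R\cdot N_{k-2}$ through an explicit construction and the upper bound $N_k\le R\cdot N_{k-2}$ by induction on $k$, after checking the two base cases (for $k=1$ the extremal coloring is a monochromatic $K_{p-1}$, and for $k=2$ it is the extremal $2$-coloring of $K_R$).

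For the lower bound I would use iterated substitution. For even $k=2t$, start from a $2$-coloring of $K_R$ in colors $\{1,2\}$ with no monochromatic $K_p$, and for $j=2,\dots,t$ build $H_j$ by substituting a copy of $H_{j-1}$ into each vertex of a copy of this extremal $2$-coloring of $K_R$ recolored with two fresh colors $\{2j-1,2j\}$; then $H_t$ has $R^t$ vertices and uses $2t$ colors. For odd $k=2t+1$, take $p-1$ disjoint copies of $H_t$ and color every edge between distinct copies with one new color $2t+1$, giving $(p-1)R^t$ vertices. In both constructions every triangle meeting two or three substitution parts sees at most two colors (all edges between a fixed pair of parts share one color), so no rainbow triangle arises; and a monochromatic $K_p$ in a cross color would project to a monochromatic $K_p$ in the reduced $2$-coloring (resp.\ would require $p$ distinct copies), while one in a lower color would lie inside a single part, each case being excluded inductively. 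This yields Gallai-$k$-colorings of the stated orders with no monochromatic $K_p$.

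For the upper bound I would apply Gallai's structure theorem: any Gallai-$k$-coloring of $K_n$ admits a partition $V_1,\dots,V_m$ with $m\ge 2$ whose reduced coloring on $K_m$ uses at most two colors, say $\{1,2\}$, and in which all edges between two fixed parts receive a single color. Lifting a monochromatic $K_p$ from the reduced coloring shows that the reduced coloring itself has no monochromatic $K_p$, hence $m\le R$. If colors $1$ and $2$ did not occur inside any part, each $V_i$ would be a Gallai-coloring in the remaining $k-2$ colors, giving $|V_i|\le N_{k-2}$ and therefore $n\le m\cdot N_{k-2}\le R\cdot N_{k-2}$, which is exactly the desired recursion.

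The hard part is precisely that the two reduced colors may reappear inside the parts, so the clean color-reduction above is not available in general. The obstruction is that a color-$1$ clique of size $a$ inside $V_i$ and a color-$1$ clique of size $b$ inside $V_j$ amalgamate into a color-$1$ clique of size $a+b$ whenever the edges between $V_i$ and $V_j$ have color $1$, forcing constraints such as $a+b\le p-1$ across the whole partition. Turning these local constraints into the global bound $n\le R\cdot N_{k-2}$ seems to require a delicate analysis of how monochromatic cliques in the two reduced colors are distributed among the parts, possibly after choosing the Gallai partition to minimize $m$ or to be otherwise extremal. It is exactly this step that currently resists a uniform treatment for all $p$: the conjecture has so far been confirmed only for $p=3$ and $p=4$, where the interaction between the reduced colors and the interior colorings can be resolved by explicit case analysis.
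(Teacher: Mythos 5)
The statement you are proving is Conjecture~\ref{conj:Kt}: in the paper it is an \emph{open conjecture} of Fox, Grinshpun and Pach \cite{FoGP}, not a theorem, and the paper contains no proof of it --- it records only that the conjecture has been verified for $p=3$ \cite{ChGr,GSSS} and $p=4$ \cite{LMSSS}. So there is no proof of the paper's to compare yours against, and your proposal, as you yourself concede in the final paragraph, is not a proof either. To its credit, the parts you do carry out are sound: the substitution construction giving $N_{2t}\geq (R_2(K_p)-1)^t$ and $N_{2t+1}\geq (p-1)(R_2(K_p)-1)^t$ is correct (a monochromatic clique in a fresh color meets each part in at most one vertex, so it projects injectively to the reduced coloring, and substitution preserves the Gallai property), and it is essentially the known extremal construction from \cite{FoGP}. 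Likewise, your upper-bound sketch correctly isolates the obstruction: after applying Theorem~\ref{th:Gallai}, the clean recursion $N_k\leq (R_2(K_p)-1)\cdot N_{k-2}$ only follows when the two reduced colors do not recur inside the parts, and when they do recur, monochromatic cliques in a reduced color amalgamate across parts, so $|V_i|\leq N_{k-2}$ fails and no uniform accounting is known.

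The genuine gap, then, is exactly the step you flag, and it is worth being explicit that it is not a technical loose end but the entire open content of the conjecture. The proofs for $p=3$ and $p=4$ do not rescue your induction; they replace it with intricate case analyses of the Gallai partition (splitting on the number of parts $m$, on which parts carry the reduced colors internally, and on the sizes of the largest monochromatic cliques in each part) that so far have no extension to general $p$. A useful calibration of the distance between what is known and what is conjectured is the paper's own Proposition~\ref{prop:Kp}, whose bound $g^k_1(p)<2^{2k(p-2)-3}$ is the best general upper bound available there, against a conjectured truth of order $(R_2(K_p)-1)^{k/2}$; as the paper notes in Section~\ref{sec:ch-proof p=5}, this leaves a gap of $2^{\left(\frac{1}{4}+o(1)\right)kp}\leq g^k_1(p)\leq 2^{(2-o(1))kp}$ in the exponent. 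In short: your lower bound is correct and standard, your reduction of the conjecture to the recursion $N_k=R\cdot N_{k-2}$ is the right framing, but the upper-bound half of that recursion is precisely what remains unproved, for this paper and in the literature.
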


We can slightly improve Theorem \ref{th:q} for $q=1$ by proving the following upper bound on $g^k_1(p)$.

\noindent\begin{proposition}\label{prop:Kp} For integers $k\geq 3$ and $p\geq 5$, we have $g^k_1(p)< 2^{2k(p-2)-3}$.
\end{proposition}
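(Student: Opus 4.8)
The plan is to pass to the equivalent extremal quantity $N:=g^k_1(p)-1$, the largest order of a complete graph admitting a Gallai-$k$-coloring with no monochromatic $K_p$, and to prove $N<2^{2k(p-2)-3}$. It is convenient to work with the refined parameter $M(k;b_1,\dots,b_k)$, the largest order of a complete graph carrying a Gallai-coloring with color set $\{1,\dots,k\}$ in which, for every $c$, there is no color-$c$ copy of $K_{b_c+1}$; thus $N=M(k;p-1,\dots,p-1)$. The engine is the structure theorem for Gallai-colorings (Gallai's partition theorem): every such coloring of $K_n$ with $n\ge2$ admits a partition $V_1,\dots,V_m$ with $m\ge2$ whose reduced graph on the parts is $2$-colored and constant between parts.

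Applying the partition to an extremal coloring for $M(k;\vec b)$ gives $M(k;\vec b)\le\sum_i M(k;\vec\alpha_i)$, where $\vec\alpha_i$ records the per-color clique numbers inside $V_i$, subject to the constraint that along every monochromatic clique of parts in each of the two reduced colors the corresponding $\alpha$'s sum to at most the relevant $b_c$ (such a clique of parts lifts, together with the inside cliques, to a monochromatic clique of the summed size). The heart of the argument is a weighted clique inequality: if a complete graph is $2$-colored (colors $A,B$) and $\phi,\psi$ are super-additive weight functions, then $\sum_i\phi(a_i)\psi(c_i)\le\phi(A')\psi(C')$ whenever the color-$A$ (resp.\ color-$B$) clique sums of the weights are bounded by $A'$ (resp.\ $C'$); this is proved by induction on the reduced graph, the growth being supplied by the binomial Ramsey bound $R(s,t)\le\binom{s+t-2}{s-1}$. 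Taking $\phi=\psi=4^{\,\cdot\,-1}$ and inducting on the order yields the clean symmetric bound $M(k;\vec b)\le\prod_{c=1}^k 4^{b_c-1}$, hence $N\le 4^{k(p-2)}=2^{2k(p-2)}$, already matching (and slightly beating) the $q=1$ case of Theorem~\ref{th:q}.

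To gain the remaining factor $16$, I would strengthen the induction hypothesis so that two fixed colors are charged by the true two-color Ramsey number instead of by $4^{\,b-1}$:
\[ M(k;b_1,\dots,b_k)\le \bigl(R(b_1+1,b_2+1)-1\bigr)\prod_{c=3}^{k}4^{\,b_c-1}. \]
This is again proved by induction via the Gallai partition, splitting on how the two reduced colors meet $\{1,2\}$: when the reduced colors are exactly $\{1,2\}$ the parts recombine into a single $2$-coloring whose color-$1$ and color-$2$ clique numbers are at most $b_1,b_2$, so the recombined order is at most $R(b_1+1,b_2+1)-1$ by definition of the Ramsey number; otherwise colors $1,2$ occur only inside parts and are absorbed into the Ramsey factor, while the reduced colors among $\{3,\dots,k\}$ are handled by the weighted clique inequality in its mixed form (with $\phi(x)=R(x+1,b_2+1)-1$, which is super-additive by the substitution bound, and $\psi(x)=4^{x-1}$). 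Substituting $b_c=p-1$ and $R(p,p)-1\le\binom{2p-2}{p-1}<4^{p-1}$ replaces a factor $4^{2(p-2)}$ by one below $4^{p-1}$, a saving of $4^{3-p}\le 4^{-2}=1/16$ for $p\ge5$. The resulting arithmetic $N<4^{(p-2)k-p+3}=2^{2k(p-2)-2p+6}\le 2^{2k(p-2)-4}$ then yields $g^k_1(p)=N+1<2^{2k(p-2)-3}$, as required; the hypotheses $k\ge3$ and $p\ge5$ enter precisely in making the two designated colors available and in forcing $2p-6\ge4$.

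The main obstacle is the weighted clique inequality in the mixed form required by the strengthened hypothesis. With a single weight function the statement is a routine super-additivity induction, but charging exactly two colors at the favorable Ramsey rate forces the reduced colors among $\{3,\dots,k\}$ to interact with the designated pair through the $2$-colored reduced graph, and a naive per-color estimate can double-count the contribution of a non-designated color across edges of a designated color. Getting this bookkeeping right—so that the Ramsey rate is applied to two colors and no more, while preserving both super-additivity and the binomial growth bound simultaneously—is the delicate point; the finitely many small base cases that pin down the constant (e.g.\ $R(2,2)-1=1$) must also be verified directly.
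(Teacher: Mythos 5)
Your first half is sound: the Gallai-partition recursion plus the weighted clique inequality with $\phi=\psi=4^{x-1}$ does give $M(k;b_1,\dots,b_k)\le\prod_{c}4^{b_c-1}$, hence $g^k_1(p)\le 2^{2k(p-2)}+1$ (the empty-neighborhood cases in the induction work out precisely because both weight functions are exponential). But the strengthened hypothesis on which your entire factor-$16$ gain rests, namely $M(k;b_1,\dots,b_k)\le\bigl(R(b_1+1,b_2+1)-1\bigr)\prod_{c=3}^{k}4^{b_c-1}$, is false, and so is the mixed weighted clique inequality you invoke to prove it. Concretely, take $k=3$ and $(b_1,b_2,b_3)=(8,1,2)$: since $b_2=1$ forbids color $2$ entirely, any $2$-coloring of $K_{R(9,3)-1}=K_{35}$ with colors $1$ and $3$, having no color-$1$ $K_9$ and no color-$3$ $K_3$, is a Gallai coloring, so $M(3;8,1,2)\ge R(9,3)-1=35$; your bound asserts $M(3;8,1,2)\le (R(9,2)-1)\cdot 4^{2-1}=32$. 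The same example (all weights equal to $1$) refutes the mixed inequality with $\phi(x)=R(x+1,b_2+1)-1$ and $\psi(x)=4^{x-1}$: it would force $m\cdot b_2\le(R(b_1+1,b_2+1)-1)4^{b_3-1}$, yet $m$ can be as large as $R(b_1+1,b_3+1)-1$. This is exactly the obstruction you flagged but did not overcome: a Gallai partition may pair the designated color $1$ with a non-designated color $3$, and the reduced graph is then an arbitrary $(b_1+1,b_3+1)$-Ramsey coloring, whose order is not payable by $\phi(b_1)\psi(b_3)$ when $\phi$ grows only polynomially (small $b_2$) and $b_3$ is small. Nor can you dodge this by noting that your application has $b_2=p-1\ge4$: the recursion itself creates subproblems whose second coordinate is the actual color-$2$ clique number $c_i$ inside a part, which can equal $1$, so the inductive hypothesis must be invoked exactly at vectors where it fails.

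For contrast, the paper gets the factor $2^4$ by a completely different and much cheaper device: it reuses the pigeonhole reduction lemma behind Theorem~\ref{th:q} (losing a factor $4$ per color reduction) but stops four reductions early, at the point where the residual target vector has total excess $\sum_j(p_j-2)=4$; every such vector -- $(6)$, $(3,5)$, $(4,4)$, $(3,3,4)$, $(3,3,3,3)$ padded with $2$'s -- has value below $32$ by known small Ramsey and Gallai--Ramsey numbers ($R(K_3,K_5)=14$, $R_2(K_4)=18$, $g^3_1(3,3,4)=17$, $g^4_1(3)=26$). This trades $4^4$ for a terminal factor $32$ instead of $2$, which is exactly the saving of $16$ you were after. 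To rescue your scheme you would need an inductive statement, stable under Gallai partitions, that charges \emph{every} pair of colors (not one fixed designated pair) at a Ramsey-type rate; that is essentially the territory of the Fox--Grinshpun--Pach conjecture, not a bookkeeping adjustment.
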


When $q=p-2$, we can prove the following result, thereby improving some results obtained in \cite{BeBu}.

\noindent\begin{theorem}\label{th:q=p-2}
For integers $p\geq 4$ and $k\geq p-2$, we have $g^k_{p-2}(p)=k+2$.
\end{theorem}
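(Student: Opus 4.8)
The plan is to establish the two inequalities $g^k_{p-2}(p)\ge k+2$ and $g^k_{p-2}(p)\le k+2$ separately. Throughout I use that, by Theorem~\ref{th:antiR} (and its extension to $K_m$ via the Gallai partition, giving at most $m-1$ colors), the condition ``every $K_p$ receives at least $q+1=p-1$ colors'' is the extremal condition that every $K_p$ receives \emph{exactly} $p-1$ colors.

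For the lower bound I would exhibit an explicit Gallai-$k$-coloring of $K_{k+1}$ in which every $K_p$ receives exactly $p-1$ colors; this shows $g(k+1,p,p-1)\le k$, i.e. $g^k_{p-2}(p)\ge k+2$. Take vertices $v_0,v_1,\dots,v_k$ and color each edge $v_iv_j$ with $i<j$ by the color $j$. One checks at once that this uses exactly the $k$ colors $1,\dots,k$, and that every triangle $v_av_bv_c$ ($a<b<c$) receives only the two colors $\{b,c\}$, so the coloring is Gallai. For any $p$ vertices $v_{i_1},\dots,v_{i_p}$ with $i_1<\cdots<i_p$ the colors that occur are precisely $\{i_2,\dots,i_p\}$ (the smallest index is never a maximum), that is, exactly $p-1$ colors. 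For $k=p-2$ the graph $K_{k+1}=K_{p-1}$ contains no $K_p$ and the bound $g^k_{p-2}(p)\ge p$ is the trivial one, so the construction is substantive only for $k\ge p-1$.

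For the upper bound it suffices to prove the following color-counting lemma: \emph{in any Gallai-coloring of $K_n$ $(n\ge p)$ in which every $K_p$ receives at least $p-1$ colors, the total number of colors equals $n-1$.} Granting this, a Gallai-$k$-coloring of $K_{k+2}$ in which every $K_p$ received at least $p-1$ colors would use $k+1>k$ colors, a contradiction; hence some $K_p$ receives at most $p-2$ colors, giving $g^k_{p-2}(p)\le k+2$. I would prove the lemma by induction on $n$, the base case $n=p$ being exactly Theorem~\ref{th:antiR}. For the inductive step, every $(n-1)$-subset inherits the hypothesis, so by induction it uses exactly $n-2$ colors. If $K_n$ itself used only $n-2$ colors, then every color class would have to meet every $(n-1)$-subset, which means that no color class has all of its edges incident to a single vertex. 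I would then invoke the Gallai partition $V_1,\dots,V_s$ ($s\ge2$, with at most two colors appearing between parts) and derive a contradiction.

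The main obstacle is this last step: ruling out that $K_n$ uses only $n-2$ colors, equivalently forcing the extremal colorings into the recursive ``peeling'' shape of the construction above (one part a single vertex joined to the rest by a fresh color, the rest again extremal, yielding $n-1$ distinct colors). With at most two colors between the parts, singleton or small parts tend to create a $K_p$ that sees only those between-colors together with too few internal colors; the inequality $(\text{number of colors})\le 2+\sum_i(|V_i|-1)=2+n-s$, combined with the assumed count $n-2$, already forces $s\le4$, so only finitely many partition shapes must be examined. In each case the analysis should either produce a $K_p$ with at most $p-2$ colors, contradicting the hypothesis, or pin down the peeling structure, contradicting the assumption of only $n-2$ colors. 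I have checked that this dichotomy closes for the first nontrivial instance ($p=4$, $n=5$), where every Gallai-partition type indeed yields a $K_4$ with only two colors; the remaining work is to carry the same case analysis through for general $p$ and $n$, which is where the delicate counting lies.
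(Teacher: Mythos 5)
Your lower bound is complete and correct: coloring $v_iv_j$ ($i<j$) with color $j$ is just the mirror image of the paper's construction (the paper uses the smaller index $i$), and your verification that every triangle gets two colors and every $K_p$ gets exactly $p-1$ colors is right. Your reduction of the upper bound to the color-counting lemma (``extremal Gallai-colorings of $K_n$ use exactly $n-1$ colors'') is also valid and, via Theorem~\ref{th:antiR}, equivalent to the theorem; the base case and your observation that no color class can be a star (since every $(n-1)$-subset must already carry all $n-2$ colors) are both fine. The genuine gap is the inductive step: ruling out that $K_n$ uses only $n-2$ colors. At exactly this point you switch from proof to intention --- you say the case analysis over Gallai-partition shapes with $s\le 4$ parts ``should'' either produce a $(p-2)$-colored $K_p$ or pin down the peeling structure, and you have checked only the single instance $p=4$, $n=5$. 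That analysis is the technical heart of the theorem (in the paper it occupies Claims~\ref{cl:q=p-2 1} and~\ref{cl:q=p-2 2}, the disjointness argument, and the final count), so as written the upper bound is not proven.

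The gap is fillable, along lines that mirror the paper's proof (which runs the induction on $k$ rather than on $n$). Concretely: if $s\ge 4$, nonempty subsets $V_i'\subseteq V_i$ with $\sum_i|V_i'|=p$ span a $K_p$ with at most $2+\sum_i\left(|V_i'|-1\right)=2+p-s\le p-2$ colors, an immediate contradiction; if $s=3$, two of the three pair-colors coincide and merging the two parts not common to those pairs yields a two-part Gallai partition; and for $s=2$ with $c(V_1,V_2)=1$ one first shows (a) color $1$ appears in neither part and (b) $C(V_1)\cap C(V_2)=\emptyset$ (both by building a $K_p$ in which the repeated color is counted only once, exactly as in the paper's Claim~\ref{cl:q=p-2 1}). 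Then the assumed count gives $1+|C(V_1)|+|C(V_2)|=n-2$, so neither part is a singleton (else color $1$ would be a star, contradicting your no-star fact) and, say, $|C(V_2)|=|V_2|-2$ while $|C(V_1)|=|V_1|-1$. Now $|V_2|\ge p$ contradicts the induction hypothesis applied to $G[V_2]$, and $2\le|V_2|\le p-1$ dies by taking $V_2$ together with any $p-|V_2|$ vertices of $V_1$: by Theorem~\ref{th:antiR} this $K_p$ has at most $1+(|V_2|-2)+(p-|V_2|-1)=p-2$ colors. Since these steps correspond almost one-for-one to the paper's claims, your route is not genuinely different --- it is the same partition-plus-counting induction with the roles of $n$ and $k$ interchanged --- but until this analysis is written out for general $p$ and $n$, the theorem remains unproved.
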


The above result is equivalent to $g(n,p,p-1)=n-1$, where $n\geq p\geq 4$.
Using Theorem \ref{th:q=p-2}, we can show that $g^k_q(p)$ is at least quadratic in $k$ for $q=\left\lfloor \sqrt{p-1}\right\rfloor-1$.

\noindent\begin{theorem}\label{th:q=sqrt}
For integers $p\geq 17$ and $k\geq \left\lfloor \sqrt{p-1}\right\rfloor-1$, we have $g^k_{\left\lfloor \sqrt{p-1}\right\rfloor-1}(p)\geq k^2+2k+2$.
\end{theorem}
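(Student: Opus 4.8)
The plan is to exhibit, for $n=(k+1)^2$, an explicit Gallai-$k$-coloring of $K_n$ in which every $K_p$ receives at least $\lfloor\sqrt{p-1}\rfloor$ colors. Since here $q=\lfloor\sqrt{p-1}\rfloor-1$, such a coloring witnesses $g(n,p,q+1)\leq k$, i.e. $g^k_q(p)\geq (k+1)^2+1=k^2+2k+2$. (When $(k+1)^2<p$ the grid below has no $K_p$ at all and the bound is vacuous, so I may assume $p\leq (k+1)^2$, equivalently $\lceil\sqrt{p}\rceil\leq k+1$; the hypotheses $p\geq 17$ and $k\geq\lfloor\sqrt{p-1}\rfloor-1=q$ keep us in the nontrivial range $k\geq q$.)

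The building block is the extremal coloring behind Theorem~\ref{th:q=p-2}: the monotone Gallai-$k$-coloring of $K_{k+1}$ on vertices $v_0,\dots,v_k$ with $c(v_iv_j)=\max\{i,j\}$, in which every sub-clique $K_s$ receives exactly $s-1$ colors. I would substitute this coloring into itself while reusing the single palette $\{1,\dots,k\}$: identify the vertices of $K_n$ with pairs $(i,j)$, $i,j\in\{0,\dots,k\}$, and color the edge $(i_1,j_1)(i_2,j_2)$ by $\max\{i_1,i_2\}$ if $i_1\neq i_2$ and by $\max\{j_1,j_2\}$ if $i_1=i_2$. This uses exactly the $k$ colors $1,\dots,k$.

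First I would verify this is a Gallai-coloring by a short case check on the ``projection pattern'' of a triangle to the first coordinate (three distinct values; exactly two distinct; all equal): in each case the triangle sees at most two colors. The key step is bounding the colors inside an arbitrary $K_p$. Writing $I$ for the set of first coordinates used ($|I|=t$) and $J_i$, $a_i=|J_i|$ for the second coordinates within part $i$ (so $\sum a_i=p$), the cross-part edges contribute exactly the $t-1$ colors $I\setminus\{\min I\}$, while the edges inside the largest part contribute $a_{\max}-1$ colors; since each of these is a subset of $\{1,\dots,k\}$, the $K_p$ sees at least $\max\{t-1,a_{\max}-1\}$ colors. Minimizing over admissible shapes and using $a_{\max}\geq\lceil p/t\rceil$ gives the lower bound $\min_t\max\{t,\lceil p/t\rceil\}-1=\lceil\sqrt{p}\rceil-1$, the balanced shape $t\approx a_{\max}\approx\sqrt{p}$ being realizable inside the grid precisely because $\lceil\sqrt{p}\rceil\leq k+1$.

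Finally I would close the gap $\lceil\sqrt{p}\rceil-1\geq\lfloor\sqrt{p-1}\rfloor$, which holds because $\lfloor\sqrt{p-1}\rfloor\leq\sqrt{p-1}<\sqrt{p}\leq\lceil\sqrt{p}\rceil$ forces the integer left-hand side to be at most $\lceil\sqrt{p}\rceil-1$. I expect the main obstacle to be the optimization step: showing rigorously that reusing one palette across both substitution levels still forces $\max\{t-1,a_{\max}-1\}$ colors (the cross colors and the within-part colors must be treated as possibly overlapping subsets of $\{1,\dots,k\}$, so only the maximum of their sizes is guaranteed), and then checking that the balanced shape is simultaneously color-minimizing for an adversary and admissible in the $(k+1)\times(k+1)$ grid.
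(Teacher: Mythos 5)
Your proposal is correct and is essentially the paper's own proof: the paper derives Theorem~\ref{th:q=sqrt} from the more general Theorem~\ref{th:q=sqrt+} (the case $m=2$), which likewise substitutes $k+1$ copies of the extremal Gallai-$k$-coloring of $K_{k+1}$ guaranteed by Theorem~\ref{th:q=p-2} (your monotone $\max$-coloring is exactly the paper's construction for that theorem) into that same coloring, and notes that any clique using at most $q$ colors then meets at most $q+1$ parts with at most $q+1$ vertices per part, hence has order at most $(q+1)^2\leq p-1$. Your explicit count that every $K_p$ sees at least $\max\{t-1,a_{\max}-1\}\geq\left\lceil\sqrt{p}\right\rceil-1\geq\left\lfloor\sqrt{p-1}\right\rfloor$ colors is precisely the contrapositive of that invariant, i.e., a concrete verification of what the paper leaves as ``easy to check.''
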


Note that Theorem \ref{th:q=sqrt} implies that $g(n,p,\lfloor\sqrt{p-1}\rfloor)\leq \left\lceil\sqrt{n}\right\rceil-1$ for $p\geq 17$ and $n\geq \left(\left\lfloor\sqrt{p-1}\right\rfloor+1\right)^2$.
When $q=p-3$, we can prove the following result, which is equivalent to $g(n,p,p-2)=n-2$ for $n\geq p\geq 8$.

\noindent\begin{theorem}\label{th:q=p-3}
For integers $p\geq 8$ and $k\geq p-3$, we have $g^k_{p-3}(p)=k+3$.
\end{theorem}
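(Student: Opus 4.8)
My plan is to establish the two matching bounds $g^k_{p-3}(p)\ge k+3$ and $g^k_{p-3}(p)\le k+3$ separately; equivalently, I would exhibit a Gallai-$k$-coloring of $K_{k+2}$ in which every $K_p$ uses at least $p-2$ colors (lower bound), and show that every Gallai-$k$-coloring of $K_{k+3}$ contains a $K_p$ using at most $p-3$ colors (upper bound). The lower bound only needs the honest range $k\ge p-2$, since for $k=p-3$ the graph $K_{k+2}=K_{p-1}$ contains no $K_p$ and the bound is trivial.

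For the lower bound I would take the \emph{blow-up of the max-coloring}. On the vertex set $\{0,0',1,2,\dots,k\}$ define the coloring by giving edge $\{i,j\}$ with $1\le i<j\le k$ the color $j$, giving both $\{0,j\}$ and $\{0',j\}$ the color $j$ for each $1\le j\le k$, and giving $\{0,0'\}$ the color $1$. This uses exactly the $k$ colors $1,\dots,k$, and every triangle is easily checked to receive at most two colors, so the coloring is Gallai. If a $K_p$ avoids one of $0,0'$, it is isomorphic (as a colored graph) to a $K_p$ of the pure max-coloring on $\{0,1,\dots,k\}$, where edge $\{i,j\}$ gets color $\max\{i,j\}$, and there every $K_p$ receives exactly $p-1$ colors. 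If a $K_p$ contains both $0$ and $0'$, say on $\{0,0'\}\cup\{a_1<\cdots<a_{p-2}\}$, then its colors are exactly $\{1\}\cup\{a_1,\dots,a_{p-2}\}$, which has size $p-2$ when $a_1=1$ and $p-1$ otherwise. In all cases a $K_p$ receives at least $p-2$ colors, giving $g^k_{p-3}(p)\ge k+3$.

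For the upper bound I would argue by contradiction: assume a Gallai-$k$-coloring of $K_N$ with $N\ge k+3$ in which every $K_p$ receives at least $p-2$ colors, and derive $N\le k+2$, ideally by induction on $p$ with base cases handled directly. The engine is the Gallai structure theorem: fix a Gallai partition $V_1,\dots,V_m$ whose reduced coloring on $K_m$ uses at most two colors $\alpha,\beta$. Picking one vertex from each of $t$ suitably chosen parts and distributing the remaining vertices produces a $K_p$ whose colors number at most $(p-t)+2$, because each part of size $s$ in the selection contributes at most $s-1$ internal colors (Theorem~\ref{th:antiR}) while all crossing edges use only $\alpha,\beta$. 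Hence if $m\ge 5$ we already find a $K_p$ with at most $p-3$ colors, a contradiction, so we may assume $m\le 4$. Next, for the largest part $V_1$, the \emph{single-extra-vertex trick} shows every $K_{p-1}\subseteq V_1$ must use at least $p-3$ colors: adjoining any vertex of another part joins it to all of $V_1$ in a single crossing color, so the resulting $K_p$ has at most one color more than the chosen $K_{p-1}$, and the hypothesis forces the latter to have at least $p-3$. This is exactly the $(p-1)$-analogue of the statement being proved, so the inductive hypothesis (or Theorem~\ref{th:q=p-2} applied to clique size $p-1$) bounds the internal excess of $V_1$, and the same reasoning applies to every part large enough to contain a $K_{p-1}$.

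The main obstacle is the final accounting. The relation $k\le 2+\sum_i k_i$, where $k_i$ is the number of colors inside $V_i$, together with Theorem~\ref{th:antiR} inside each part gives only $\sum_i(n_i-k_i)\le (N-k)+2$, which is too weak by itself to force $N\le k+2$; one must additionally control how much the crossing colors $\alpha,\beta$ and the color sets of different parts overlap, since any reuse of a color across parts, or between a part and a crossing color, tends to create a spanning $K_p$ with too few colors. Closing the residual gap between the ``at least $p-3$'' obtained from the single-extra-vertex trick and the ``at least $p-2$'' that would let me invoke Theorem~\ref{th:q=p-2} with the stronger conclusion $n_1\le k_1+1$ is the delicate point: I expect to rule out a $K_{p-1}\subseteq V_1$ with exactly $p-3$ colors by extending it, either through a crossing edge whose color already appears in it or through a suitable internal vertex, to a $K_p$ violating the hypothesis. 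I anticipate that the condition $p\ge 8$ is precisely what gives enough room for these extension and counting arguments (and for the base cases of the induction) to go through, mirroring how $p\ge 4$ sufficed for the one-step-easier Theorem~\ref{th:q=p-2}.
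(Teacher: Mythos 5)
Your lower bound is correct and complete: your colored graph (the max-coloring on $\{1,\dots,k\}$ with the bottom vertex duplicated) is isomorphic to the paper's construction (the min-coloring with the top vertex duplicated), and your verification that every $K_p$ receives at least $p-2$ colors is sound. The upper bound, however, is a plan rather than a proof, and it contains two genuine gaps. The first is structural: you propose induction on $p$, invoking the $(p-1)$-analogue of the theorem inside the largest part $V_1$ (which is indeed what the single-extra-vertex trick sets up, since it shows every $K_{p-1}$ in $V_1$ has at least $(p-1)-2$ colors). But the $(p-1)$-analogue is \emph{false} when $p-1=7$: the paper's Remark~\ref{re:q=p-3} exhibits a Gallai-$5$-coloring of $K_8$ with no $4$-colored $K_7$, so $g^5_4(7)>8$. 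Hence your induction cannot start at $p=8$; the base case $p=8$ (for all $k\geq 5$) would have to be proved from scratch, and that is essentially the full difficulty of the theorem (the paper's appendices show how much work even the single instances $g^4_3(6)$ and $g^5_3(6)$ require). The paper sidesteps this entirely by inducting on $k$ at fixed $p$ (base case $k=p-3$ trivial), and by using the already-proved Theorem~\ref{th:q=p-2} \emph{at the same $p$} — not the $(p-1)$-analogue of the statement being proved — to manufacture $(p'-2)$-colored copies of $K_{p'}$ inside parts.

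The second gap is the one you candidly flag yourself: after reducing to $m\leq 4$, the accounting is not done, and what you can actually derive is far too weak. The single-extra-vertex trick plus any inductive hypothesis gives at best $|V_i|\leq|C(V_i)|+2$ per part, which sums to $N\leq k+2m\leq k+8$, nowhere near the required $N\leq k+2$. The entire substance of the paper's proof lies in closing exactly this gap: showing that the crossing colors do not occur inside any part and that the color sets of distinct parts are disjoint (Claims~\ref{cl:q=p-3 1}, \ref{cl:q=p-3 3'}, \ref{cl:q=p-3 4}); that in the $m=4$ case every part satisfies the sharp bound $|V_i|\leq|C(V_i)|+1$ (Claim~\ref{cl:q=p-3 2}); that in the $m=2$ case one part satisfies $|V_1|=|C(V_1)|+1$ while the other satisfies $|V_2|=|C(V_2)|+2$ (Claim~\ref{cl:q=p-3 3}); and finally a page of case analysis on whether the crossing color $1$ lies in $C(V_1)$ or $C(V_2)$, resolved by extension arguments that combine Theorem~\ref{th:q=p-2} (to find a $(p-4)$-colored $K_{p-2}$ or $(p-5)$-colored $K_{p-3}$ inside a part) with Lemma~\ref{le:Gallai} (each added vertex contributes at most one new color). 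None of these steps is routine, and your proposal performs none of them, so as it stands the upper bound is not established.
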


Furthermore, we can determine the exact value of $g^k_2(5)$. Using this result, we can show that $g^k_q(p)$ is exponential in $k$ for all $1\leq q\leq\left\lfloor \log_2 (p-1)\right\rfloor$.

\noindent\begin{theorem}\label{th:p=5}
For integers $k\geq 2$, we have $g^k_2(5)=2^k+1$.
\end{theorem}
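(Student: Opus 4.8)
The plan is to prove the two matching bounds $g^k_2(5)\ge 2^k+1$ and $g^k_2(5)\le 2^k+1$ separately, the first by an explicit construction and the second by induction on $k$ via the Gallai partition. For the lower bound I would exhibit a Gallai-$k$-coloring of $K_{2^k}$ in which every $K_5$ receives at least $3$ colors. Identify the vertices of $K_{2^k}$ with the binary strings $\{0,1\}^k$ and color the edge $uv$ by $\chi(uv)=\min\{i:u_i\neq v_i\}$, the first coordinate in which $u$ and $v$ differ; this uses exactly the $k$ colors $1,\dots,k$ (color $k$ occurs, e.g., on $0^k$ versus $0^{k-1}1$).

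First I would check this is a Gallai-coloring. Given a triangle $u,v,w$, if the first differences of $uv$ and $uw$ occur at distinct coordinates $i<j$, then $u$ and $w$ agree up to $j-1\ge i$, so $vw$ also first differs at $i$, giving colors $\{i,j\}$; if both first differences occur at a common coordinate $i$, then (the coordinate being binary) $v_i=w_i$ and $vw$ first differs at some coordinate $>i$, again at most two colors. Next I would show no $K_5$ uses at most two colors. Suppose five strings span only colors in $\{i,j\}$ with $i<j$. Every pair first differs at a coordinate $\ge i$, so all five share the prefix of length $i-1$; split them by their bit in coordinate $i$ into classes $A$ and $B$. Any pair inside $A$ (or inside $B$) agrees through coordinate $i$, hence gets color $>i$, forcing color $j$, i.e. the pair first differs exactly at coordinate $j$; since coordinate $j$ is binary, no three strings can pairwise first differ there, so $|A|,|B|\le 2$ and $|A|+|B|\le 4<5$, a contradiction. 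Hence every $K_5$ gets at least three colors, proving $g^k_2(5)\ge 2^k+1$. I would also record the useful by-product that this extremal coloring has no monochromatic triangle, which is exactly the invariant that makes the doubling $G_k=G_{k-1}\sqcup G_{k-1}$ (two copies joined by a fresh color) preserve the no-$2$-colored-$K_5$ property.

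For the upper bound I would prove, by induction on $k\ge 2$, that any Gallai-coloring of a complete graph that uses at most $k$ colors and contains no $2$-colored $K_5$ has at most $2^k$ vertices; applying this at $n=2^k+1$ gives $g^k_2(5)\le 2^k+1$. The base case $k=2$ is immediate: with at most two colors every $K_5$ is automatically $2$-colored, so the graph has at most $4=2^2$ vertices. For the inductive step, take a Gallai partition into parts $V_1,\dots,V_m$ with $m\ge 2$ and reduced colors contained in $\{1,2\}$. Picking one vertex from each of five distinct parts would yield a $2$-colored $K_5$, so $m\le 4$. When a reduced color never occurs inside a given part, that part uses at most $k-1$ colors and the induction hypothesis bounds it by $2^{k-1}$. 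The clean subcases follow this pattern: for $m=2$ with the single cross-color $c$ absent inside both parts one gets $|V_1|,|V_2|\le 2^{k-1}$ and hence $n\le 2^k$; and combining two parts $V_i,V_j$ joined by a reduced color shows $V_i\cup V_j$ is itself a Gallai-colored clique with no $2$-colored $K_5$, so that taking $a$ vertices from $V_i$ and $b=5-a$ from $V_j$ forces at least three colors, which already rules out $m=4$ and $m=3$ unless the parts avoid the reduced colors internally.

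The main obstacle is the case in which a reduced color reappears inside a part, since then a part may use all $k$ colors and the naive inductive appeal breaks down; the closely related nuisance is that parts using at most one color cannot be bounded by $2^1$. Both are resolved not by induction on the part alone but by the cross-part configurations. For instance, if the $m=2$ cross-color $c$ is used inside $V_1$ on an edge $xy$, then appending any triangle of $V_2$ to $\{x,y\}$ forces every triangle of $V_2$ to use two colors different from $c$, whence $c$ is absent inside $V_2$ (so $|V_2|\le 2^{k-1}$), while appending a vertex of $V_2$ to every $K_4$ of $V_1$ forces each such $K_4$ to display at least two colors other than $c$; the remaining work is to turn this ``$c$ cannot dominate'' condition into the bound $|V_1|\le 2^{k-1}$, and to push the analogous combinatorics through the $m=3$ and $m=4$ configurations, where the key quantitative fact is that two parts sharing their internal color palette and joined by a reduced color can contain together at most four vertices. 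I expect this cross-constraint bookkeeping, rather than any single clever inequality, to be the technical heart of the proof.
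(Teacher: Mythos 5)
Your lower-bound construction is correct, and it is in fact isomorphic to the paper's: coloring pairs of binary strings by the first differing coordinate is exactly the paper's iterated doubling (two copies of $G_i$ joined by a fresh color), read with the color order reversed; your verification that it is Gallai and $K_5$-good is sound. The gap is in the upper bound, at precisely the point you defer as ``the remaining work.'' In the case where the cross color of a two-part Gallai partition reappears inside $V_1$, you propose to prove $|V_1|\le 2^{k-1}$. That intermediate claim is \emph{false}. Take $k=3$, $V_1=\{a,b,v_1,v_2,v_3\}$ with $c(ab)=1$, $c(av_1)=c(bv_1)=c(v_2v_3)=3$, $c(av_2)=c(bv_2)=c(av_3)=c(bv_3)=c(v_1v_2)=c(v_1v_3)=2$, and $V_2=\{u\}$ with all edges from $u$ to $V_1$ colored $1$. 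One checks directly that this $K_6$ is a Gallai-$3$-coloring in which every $K_5$ receives all three colors, that $\{V_1,V_2\}$ is its unique minimal Gallai partition with cross color $1\in C(V_1)$, and that both of your derived conditions hold ($1\notin C(V_2)$, and every $K_4$ of $V_1$ shows two colors other than $1$); yet $|V_1|=5>4=2^{k-1}$. So your ``$c$ cannot dominate'' condition is genuinely too weak to force the bound you need, and no bookkeeping can rescue a false statement: when $V_2$ is small, $V_1$ may exceed $2^{k-1}$ even though the total stays below $2^k$.

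What is true, and what the paper proves, is a \emph{joint} bound obtained by decomposing $V_1$ further. The paper shows (Claim~\ref{cl:p=5 2}) that color $1$ induces a bipartite graph inside $V_1$, writes $V_1=A\cup B\cup V''_1$ where $A,B$ are the two sides of that bipartite graph and $V''_1$ the vertices missed by color $1$, and proves palette-separation statements (Claims~\ref{cl:p=5 3} and~\ref{cl:p=5 5}): $C(A)$, $C(B)$ and $C(V_2)$ contribute essentially disjoint color sets, $1\notin C(A)\cup C(B\cup V''_1)$, and each piece $X$ satisfies $|X|\le 2^{|C(X)|}$. The closing count is
$|V(G)|=|A|+|V_2|+|B\cup V''_1|\le 2^{\,k-2-|C(V_2)|}+2^{|C(V_2)|}+2^{k-1}\le 2^{k-2}+1+2^{k-1}<2^k+1$,
i.e.\ the saving comes from playing $|A|$ off against $|V_2|$ (their palettes are disjoint, so they cannot both be large), not from bounding $|V_1|$ on its own. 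In the counterexample above this reads $|A|+|V_2|=1+1$ and $|B\cup V''_1|=4$, consistent with the theorem. To repair your argument you would have to replace the false claim $|V_1|\le 2^{k-1}$ by this kind of two-sided accounting across the partition.
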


\noindent\begin{theorem}\label{th:q=log}
For integers $p\geq 5$ and $k\geq \left\lfloor \log_2 (p-1)\right\rfloor$, we have $g^k_{\left\lfloor \log_2 (p-1)\right\rfloor}(p)\geq 2^k+1$.
\end{theorem}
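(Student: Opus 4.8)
The plan is to establish this lower bound by exhibiting, for each $k$, an explicit Gallai-$k$-coloring of $K_{2^k}$ in which every $K_p$ receives at least $q+1$ distinct colors, where $q=\lfloor\log_2(p-1)\rfloor$; by definition of $g^k_q(p)$ this certifies $g^k_q(p)\ge 2^k+1$, and for $p=5$, $q=2$ it recovers the extremal coloring underlying Theorem~\ref{th:p=5}. The degenerate case $k=q$ is immediate: with only $q$ colors available every $K_p$ uses at most $q$ colors, so $g^q_q(p)=p$, and since $2^q\le p-1$ we already have $p\ge 2^q+1=2^k+1$. So assume $k\ge q+1$ (note this forces $2^k\ge p$, so copies of $K_p$ actually occur) and identify the vertices of $K_{2^k}$ with the binary strings $\{0,1\}^k$. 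I would color each edge $uv$ by $c(uv)=\min\{t:u_t\neq v_t\}$, the most significant coordinate in which $u$ and $v$ differ; this uses exactly the $k$ colors $1,\dots,k$.

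First I would verify that this is a Gallai-coloring. Given a triangle on $u,v,w$, let $i$ be the smallest of the three edge colors, say $i=c(uv)$. Then $u$ and $v$ agree on coordinates $1,\dots,i-1$, and minimality of $i$ forces $u,v,w$ to agree pairwise, hence mutually, on $1,\dots,i-1$. Since $w_i$ equals exactly one of $u_i,v_i$, one of the two remaining edges again has color $i$ while the other has color $>i$, so the triangle receives at most two colors and is not rainbow.

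Next I would bound below the number of colors on an arbitrary $p$-set $S$. Let $C\subseteq\{1,\dots,k\}$ be the set of colors appearing inside $S$, and restrict each vertex of $S$ to its coordinates in $C$, giving a map $S\to\{0,1\}^{|C|}$. If two distinct vertices $u,v\in S$ had equal restrictions, they would agree on all coordinates in $C$; but $c(uv)$ is the first coordinate where they differ, and $c(uv)\in C$ since it is realized inside $S$, a contradiction. Hence the restriction map is injective, so $p=|S|\le 2^{|C|}$, i.e. every $K_p$ uses at least $\lceil\log_2 p\rceil$ colors. Finally, since $q=\lfloor\log_2(p-1)\rfloor\le\log_2(p-1)<\log_2 p$, the real number $\log_2 p$ strictly exceeds the integer $q$, so $\lceil\log_2 p\rceil\ge q+1$; thus every $K_p$ receives at least $q+1$ colors and the construction is complete.

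The two verifications are routine; the step requiring the most care is the color-counting inequality combined with the boundary comparison $\lceil\log_2 p\rceil\ge\lfloor\log_2(p-1)\rfloor+1$, which is precisely what pins the construction to the threshold $q=\lfloor\log_2(p-1)\rfloor$ and not a larger value of $q$. I expect the main point to be conceptual rather than a genuine obstacle: recognizing that this single binary (ultrametric) coloring simultaneously enforces the Gallai property and a tight logarithmic per-clique color count, with the injective-restriction argument sharp enough to reach the exact bound $2^k+1$.
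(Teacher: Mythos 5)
Your proof is correct, but it follows a genuinely different route from the paper's proof of this particular theorem. The paper proves Theorem~\ref{th:q=log} by reduction: since $q=\lfloor\log_2(p-1)\rfloor$ gives $g^q_2(5)=2^q+1\leq p$, every Gallai-$q$-colored $K_p$ contains a $2$-colored $K_5$, hence $g^k_q(p)\geq g^k_2(5)=2^k+1$. Note that this argument invokes \emph{both} directions of Theorem~\ref{th:p=5}: the lower-bound construction for $g^k_2(5)\geq 2^k+1$ \emph{and} the nontrivial inductive upper bound $g^q_2(5)\leq 2^q+1$. You instead give a direct, self-contained construction and never touch Theorem~\ref{th:p=5}, so you avoid that heavy induction entirely. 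Your coloring (vertices as binary strings, each edge colored by the first differing coordinate) is, up to relabeling colors, exactly the recursive substitution construction the paper uses for its generalization, Theorem~\ref{th:q=log+}, specialized to base $m=2$: joining two copies of $G_i$ by a single new color corresponds to prepending one coordinate. Your injective-restriction argument (a clique whose edges use color set $C$ embeds injectively into $\{0,1\}^{|C|}$, so it has at most $2^{|C|}$ vertices) is the clean counterpart of the paper's observation that any clique in $q$ fixed colors has order at most $m^q\leq p-1$, and your boundary comparison $\lceil\log_2 p\rceil\geq \lfloor\log_2(p-1)\rfloor+1$ is handled correctly. The trade-off: the paper's reduction is two lines given Theorem~\ref{th:p=5} and illustrates the useful monotonicity $g^k_q(p)\geq g^k_2(5)$, while your approach is elementary, independent of the hardest result in that section, and generalizes verbatim to base $\lfloor(p-1)^{1/q}\rfloor$, which recovers the stronger bound of Theorem~\ref{th:q=log+}. (Your separate treatment of the case $k=q$ is fine but not strictly necessary, since for $2^k<p$ the required property holds vacuously.)
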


Note that Theorem \ref{th:p=5} is equivalent to $g(n,5,3)=\lceil\log_2 n\rceil$, where $n\geq 5$. Theorem \ref{th:q=log} implies that $g(n,p,\lfloor\log_2(p-1)\rfloor+1)\leq \lceil\log_2 n\rceil$, where $p\geq 5$ and $n\geq 2(p-1)$.

Finally, motivated by the problem introduced by Erd\H{o}s, Hajnal and Rado (see Section 18 of \cite{ErHR}) to find the minimum integer $n$ such that for any $k$-coloring of $K_n$ there is a $(k-1)$-colored $K_m$, we study $g^k_{k-1}(p)$ for $k\leq p-1$. If $p$ is sufficiently larger than $k$, then $g^k_{k-1}(p)=O((p/\log^c_2 p)^{k/(k-2)})$ by Theorem \ref{th:general}. So we will focus on the case $k/p\to 1$. By Theorems \ref{th:q=p-2} and \ref{th:q=p-3}, we have $g^k_{k-1}(p)=p+1$ for $k\in \{p-1, p-2\}$ and large enough $p$. A natural question is whether $g^k_{k-1}(p)=p+1$ for $k=p-c$, where $c$ is a constant and $p$ is large enough. The following theorem answers this question.

\noindent\begin{theorem}\label{th:q=k-1}
For integers $c$, $p$ and $k$ with $c\geq 1$, $p\geq 2(8+c)^{c+1}-1$ and $k=p-c$, we have $g^k_{k-1}(p)=p+1$.
\end{theorem}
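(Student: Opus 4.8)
The plan is to establish the two inequalities $g^{p-c}_{p-c-1}(p)\ge p+1$ and $g^{p-c}_{p-c-1}(p)\le p+1$ separately, writing $k=p-c$ throughout. For the lower bound I would exhibit a single Gallai-coloring of $K_p$ using exactly $p-c$ colors. Start from the canonical coloring of $K_p$ that assigns to each edge $\{i,j\}$ with $i<j$ the color $i$; this uses the $p-1$ colors $1,\dots,p-1$ and is readily checked to be a Gallai-coloring (every triangle $\{i,j,l\}$ with $i<j<l$ sees only colors $i$ and $j$). Now merge the $c$ classes $p-c,p-c+1,\dots,p-1$ into one. Merging colors never creates a rainbow triangle, so the result is a Gallai-$(p-c)$-coloring of $K_p$ whose unique $K_p$ receives exactly $p-c=(p-c-1)+1$ colors. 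This is precisely a witness for $g^{p-c}_{p-c-1}(p)\ge p+1$.

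For the upper bound I would reduce to a clean structural statement. A Gallai-$(p-c)$-coloring of $K_{p+1}$ using fewer than $p-c$ colors already has every vertex-deleted $K_p$ using at most $p-c-1$ colors, so only colorings using exactly $p-c$ colors need attention. Deleting a vertex $v$ loses a color precisely when some color class has all of its edges incident to $v$, i.e.\ when that class is a \emph{star} centered at $v$. Hence the upper bound is equivalent to the assertion that every Gallai-coloring of $K_{p+1}$ with exactly $p-c$ colors contains a star color; one then deletes its center. So the entire problem becomes: for $p\ge 2(8+c)^{c+1}-1$, no Gallai-coloring of $K_{p+1}$ can realize as many as $p-c$ colors while every color class is non-star.

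I would prove this last statement by induction on $c$, the cases $c\in\{1,2\}$ being exactly the content of Theorems~\ref{th:q=p-2} and~\ref{th:q=p-3}. For the inductive step, apply Gallai's structure theorem to obtain a partition $V_1,\dots,V_m$ with $m\ge2$ whose reduced coloring uses at most two colors. Since the reduced coloring contributes at most two colors and each part at most $|V_i|-1$, realizing as many as $p-c=(p+1)-1-c$ colors forces $m\le c+3$; hence some part $V_{i_0}$ has $|V_{i_0}|\ge (p+1)/(c+3)$ and carries almost the maximum possible number of its own \emph{local} colors (those used only inside it). A local color is non-star in $K_{p+1}$ if and only if it is non-star inside $V_{i_0}$, so if the coloring induced on $V_{i_0}$ were all-non-star with close to $|V_{i_0}|-1$ colors, the inductive hypothesis (the case $c-1$) applied to $K_{|V_{i_0}|}$ would produce a star color there, which is also a star color of $K_{p+1}$, contradicting our assumption. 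One checks that $(p+1)/(c+3)\ge 2(8+c)^{c+1}/(c+3)\ge 2(7+c)^{c}$, so the size hypothesis for the case $c-1$ is met; the loss of a factor $\approx c+3\le 8+c$ at each of the $c$ levels of the recursion is exactly what produces the bound $p\ge 2(8+c)^{c+1}-1$.

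The main obstacle is this last passage from $K_{p+1}$ to the subgraph on $V_{i_0}$: it is not color-preserving in the way one wants. The at most two reduced colors may restrict to stars inside $V_{i_0}$, so the induced coloring of $K_{|V_{i_0}|}$ need not be all-non-star, and a naive count of its colors is off by one from the threshold demanded at level $c-1$. I expect the remedy to be a careful deficiency accounting: set $d_i=(|V_i|-1)-(\text{number of local colors of }V_i)$, show $\sum_i d_i\le c+3-m$, and use the two reduced colors (available precisely when $m\ge3$) to absorb the at most two offending classes inside $V_{i_0}$ by merging them into a local color before invoking the inductive hypothesis. Making this bookkeeping yield the sharp threshold $p-c$ rather than $p-c+1$, uniformly over the shape of the Gallai partition, is the delicate part; I anticipate that the extreme configurations $m=2$ (a single reduced color, which forces both sides of the partition to be large) and $m=3$ will need to be handled separately.
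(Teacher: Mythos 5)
Your lower-bound construction is correct, and your reduction of the upper bound is exactly the paper's frame: only exact colorings matter, and finding a $K_p$ with at most $k-1$ colors inside $K_{p+1}$ is equivalent to finding a color class that is a star (then delete its center); the base cases $c\in\{1,2\}$ via Theorems \ref{th:q=p-2} and \ref{th:q=p-3} also match. But the crux --- the inductive step --- is missing, and the route you sketch breaks down precisely in the case you defer as ``delicate''. When $m=2$, the Gallai partition has a single reduced color and the color-deficiency of the larger part is again $c$, not $c-1$; indeed the paper's Claim \ref{cl:Gallai-n-c-1} shows that the generic $m=2$ structure is $|C(V'_1)|=|V'_1|-c$ and $|C(V'_2)|=|V'_2|-1$. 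So no amount of merging bookkeeping lets you invoke the level-$(c-1)$ hypothesis on the big part: your induction is circular exactly in the main case. A secondary flaw: the ``offending'' colors inside $V_{i_0}$ are not only the at most two reduced colors but also colors shared between $V_{i_0}$ and other parts; there can be up to roughly $c+3-m$ of these, your inequality $\sum_i d_i\le c+3-m$ fails in their presence, and each such color can restrict to a star inside $V_{i_0}$ without being a star of $K_{p+1}$.

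The paper resolves both problems with an idea absent from your proposal: a quantitative strengthening. Lemma \ref{le:Gallai-n-c} proves that every exact Gallai-$(n-c)$-coloring of $K_n$ with $n\ge 2(7+c)^c$ has at least $n/(7+c)^c$ colors inducing stars, not merely one. This slack is what the induction on $c$ actually runs on: when some color class has two nontrivial components, one splits it (raising the color count by one, lowering the deficiency to $c-1$) and applies the level-$(c-1)$ count, losing at most $2$ star colors when translating back to $G$ --- a loss that only a counting version can absorb, since an existence-only statement could hand you exactly the two useless colors. And in the $m=2$ case the paper does not recurse at level $c-1$ on the big part at all; it runs a peeling algorithm: the small part has deficiency exactly $1$, so the base Lemma \ref{le:Gallai-n-1} yields $\lceil|V'_2|/2\rceil$ local star colors there (or the joining color is itself a star when $|V'_2|=1$), and one iterates on the big part, the counting again guaranteeing that enough stars are collected before the big part shrinks below the threshold needed to reapply Claim \ref{cl:Gallai-n-c-1}. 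So the gap is a missing idea (the quantitative statement plus the peeling argument), not unfinished bookkeeping.
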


The remainder of this paper is organized as follows. In the next section, we provide some useful results and additional terminology. In Section \ref{sec:ch-proof Kp}, we prove Theorem \ref{th:q} and Propositions \ref{prop:sp} and \ref{prop:Kp}. In Section \ref{sec:ch-proof q=p-2}, we give our proof of Theorem \ref{th:q=p-2}, and we prove Theorem \ref{th:q=sqrt} in a more general form. In Section \ref{sec:ch-proof q=p-3}, we present our proof of Theorem \ref{th:q=p-3}. In Section \ref{sec:ch-proof p=5}, we prove Theorems \ref{th:p=5} and \ref{th:q=log}. Section \ref{sec:ch-proof q=k-1} is devoted to our proof of Theorem \ref{th:q=k-1}. Finally, we will conclude the paper with some reflections on what our results for $g^k_q(p)$ imply for the function $g(n, p, q)$ in Section \ref{sec:ch-remark}. There we also present a conjecture and some open problems. In Appendix A and B, we provide our proofs of $g^4_{3}(6)=8$ and $g^5_{3}(6)=10$, respectively.

\section{Preliminaries}
\label{sec:ch-preliminaries}

We begin with some terminology and notation. Given a graph $G$, let $c$ : $E(G)\rightarrow [k]$ be a $k$-edge-coloring of $G$, where $[k] := \{1, 2, \ldots, k\}$. For an edge $e\in E(G)$, let $c(e)$ be the color used on edge $e$. For nonempty subsets $U$, $V\subset V(G)$ with $U\cap V= \emptyset$, let $E(U, V)=\{uv\in E(G)\colon\, u\in U, v\in V\}$ and $C(U, V)=\{c(e)\colon\, e\in E(U,V)\}$. If $\left|C(U, V)\right|=1$, then we use $c(U, V)$ to denote the unique color in $C(U, V)$. The subgraph of $G$ induced by $U$ is denoted by $G[U]$, and $G-U$ is shorthand for $G[V(G)\setminus U]$. If $U$ consists of a single vertex $u$, then we simply write $E(\{u\}, V)$, $C(\{u\}, V)$, $c(\{u\}, V)$ and $G-\{u\}$ as $E(u, V)$, $C(u, V)$, $c(u, V)$ and $G-u$, respectively. Let $C(G)$, $C(G[U])$ and $C(G-U)$ denote the set of colors used on $E(G)$, $E(G[U])$ and $E(G-U)$, respectively. We also use the abbreviation $C(U)$ for $C(G[U])$.
For a color $i$, the {\it subgraph induced by color $i$} is the subgraph that contains all the edges with color $i$ and the vertices that are incident with at least one edge of color $i$.

The following structural result on Gallai-colorings was first obtained by Gallai \cite{Gallai}, using the terminology of transitive orientations, and restated by Gy\'{a}rf\'{a}s and Simonyi \cite{GySi} in the language of graph theory.

\noindent\begin{theorem}\label{th:Gallai} {\normalfont (\cite{Gallai,GySi})}
In any Gallai-coloring of a complete graph, the vertex set can be partitioned into at least two nonempty parts such that there is only one color on the edges between every pair of parts, and there are at most two colors between the parts in total.
\end{theorem}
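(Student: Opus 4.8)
The plan is to prove the statement by producing a suitable \emph{module}, where a nonempty proper subset $M\subsetneq V(K_n)$ is called a module if, for every vertex $w\notin M$, all edges between $w$ and $M$ receive the same color $c(w,M)$. I would first record two elementary facts. If every part of a partition of $V(K_n)$ is a module, then the edges between any two parts are monochromatic: given modules $M_1,M_2$ and $x,x'\in M_1$, $y,y'\in M_2$, applying the module property twice forces $c(xy)=c(x'y)=c(xy')=c(x'y')$. Secondly, contracting a module preserves the Gallai property: if we replace a module $M$ by a single vertex joined to each $w\notin M$ by the common color $c(w,M)$, no rainbow triangle is created, since every triangle through the contracted vertex lifts to a triangle of $K_n$ using the same three colors.

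With these facts the theorem will follow by induction on $n$ from the single claim that \emph{every Gallai-coloring of $K_n$ using at least three colors contains a module $M$ with $2\le |M|\le n-1$}. Indeed, if at most two colors are used, I take the partition into singletons: it has at least two parts, is trivially monochromatic between parts, and uses at most two colors in total. If at least three colors are used, I let $M$ be the module given by the claim and contract it to a Gallai-coloring of a complete graph on fewer vertices; by induction that smaller graph has a partition into at least two parts, monochromatic between parts and using at most two colors overall. Re-expanding the contracted vertex back to $M$ leaves every between-part color unchanged (again because $M$ is a module), so the resulting partition of $V(K_n)$ has exactly the required properties.

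To prove the claim I would use the following lemma, which is where the absence of rainbow triangles enters. For a set $S$ of at most two colors, let $F_S$ be the spanning subgraph of $K_n$ consisting of all edges whose color lies outside $S$. Then every connected component $Y$ of $F_S$ is a module: all edges from a vertex $w\notin Y$ to $Y$ have colors in $S$ (otherwise $w$ would join $Y$ in $F_S$), and if $w$ saw $Y$ in two distinct colors of $S$, then walking along an $F_S$-path inside $Y$ between two differently-seen vertices would reach a consecutive pair $z,z'$ with $c(wz)\neq c(wz')$; the triangle $wzz'$ then has the two distinct $S$-colors together with the color of $zz'$, which lies outside $S$, a rainbow triangle. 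Consequently, if $F_S$ is disconnected for some $S$ with $|S|\le 2$, its components already give the partition the theorem asks for. In particular, if some vertex $v$ is incident to at most two colors, I take $S$ to be the set of colors at $v$; then $v$ is isolated in $F_S$, so $F_S$ is disconnected, and since at least three colors are used $F_S$ has an edge, so some component has at least two vertices and is the desired module.

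It remains to handle the case in which every vertex is incident to at least three colors, and I expect this to be the main obstacle. Equivalently, I must show that an \emph{indecomposable} Gallai-coloring, one with no module of size strictly between $1$ and $n$, uses at most two colors; for then some vertex would be incident to at most two colors, contradicting the standing assumption and forcing a module to exist. This indecomposable case is precisely the content of Gallai's classical structure theorem for comparability graphs and transitive orientations \cite{Gallai}: passing to the canonical decomposition into maximal modules, the quotient coloring is either monochromatic or indecomposable, and the rigidity of an indecomposable substructure turns out to be incompatible with a third color. The difficulty is genuine, because all the easy reductions above collapse exactly when every vertex already sees three colors, so the final contradiction cannot come from a single local obstruction and must instead be extracted from the global modular (transitive-orientation) structure.
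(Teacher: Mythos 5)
Your proposal is not a complete proof, and the gap is exactly where you say it is. The correct parts first: the observation that a partition into modules has monochromatic bipartite connections, that contracting a module preserves the absence of rainbow triangles (and that re-expanding preserves the Gallai partition produced by induction), and the key lemma that for any color set $S$ with $\left|S\right|\leq 2$ every component of the graph $F_S$ of edges colored outside $S$ is a module -- all of these arguments are sound, and the rainbow-triangle argument along an $F_S$-path is exactly right. The reduction to the case where every vertex is incident to at least three colors is also correct. But at that point you write that the remaining case ``is precisely the content of Gallai's classical structure theorem'' and invoke \cite{Gallai}. That remaining case -- every Gallai-coloring using at least three colors contains a module $M$ with $2\leq\left|M\right|\leq n-1$, equivalently, an indecomposable (prime) Gallai-coloring uses at most two colors -- \emph{is} Theorem~\ref{th:Gallai}, up to the elementary reductions you have already carried out. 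Citing it makes the argument circular as a self-contained proof; your final sentence essentially concedes this, and the asserted mechanism (``the rigidity of an indecomposable substructure turns out to be incompatible with a third color'') is a restatement of what must be proven, not a proof.

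For calibration: the paper itself gives no proof of this statement. It is quoted as a known theorem, due to Gallai \cite{Gallai} in the language of transitive orientations and restated graph-theoretically by Gy\'{a}rf\'{a}s and Simonyi \cite{GySi}. So your scaffolding is a faithful reconstruction of the standard modular-decomposition viewpoint surrounding the theorem, but the mathematical core -- typically obtained either from Gallai's characterization of transitively orientable (comparability) graphs or by a careful direct induction with a vertex-reinsertion analysis as in the treatment of \cite{GySi} -- is missing. A complete write-up would have to supply that case by one of these routes rather than by citation.
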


We call a vertex partition as given by Theorem \ref{th:Gallai} a {\it Gallai-partition}. Since every 2-edge-coloring of $K_n$ contains a connected monochromatic spanning subgraph, we have the following corollary.

\noindent\begin{corollary}\label{co:Gallai}
In any Gallai-coloring of a complete graph, there is a connected monochromatic spanning subgraph.
\end{corollary}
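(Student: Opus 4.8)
\textbf{Proof proposal for Corollary~\ref{co:Gallai}.}

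The plan is to deduce the statement directly from the Gallai-partition structure of Theorem~\ref{th:Gallai}, using the stated fact that every 2-edge-coloring of a complete graph contains a connected monochromatic spanning subgraph. First I would apply Theorem~\ref{th:Gallai} to the given Gallai-coloring of $K_n$. This yields a partition of the vertex set into parts $V_1, V_2, \ldots, V_t$ with $t \geq 2$, such that between any two parts all edges receive a single color, and at most two colors appear in total among all the inter-part edges.

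Next I would contract each part $V_i$ to a single vertex to obtain a reduced complete graph $K_t$ on vertices $v_1, \ldots, v_t$, where the edge $v_iv_j$ receives the unique color $c(V_i, V_j)$ used between $V_i$ and $V_j$. Since the inter-part edges use at most two colors in total, this reduced coloring of $K_t$ is a $2$-edge-coloring (or uses just one color). By the cited fact, this $2$-colored $K_t$ contains a connected monochromatic spanning subgraph; say it is connected in color $\alpha$, so the reduced vertices $v_1, \ldots, v_t$ are all joined, in the graph $K_t$, through edges of color $\alpha$.

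Finally I would lift this monochromatic connected spanning subgraph back to $K_n$. Whenever $v_iv_j$ has color $\alpha$ in the reduced graph, every edge of $E(V_i, V_j)$ has color $\alpha$ in the original coloring, so in particular all parts are pairwise connected in color $\alpha$ according to the structure of the reduced spanning tree. This connects, in color $\alpha$, at least one vertex of each part to the others; since every part is nonempty, choosing a representative vertex from each $V_i$ already gives a connected monochromatic subgraph in color $\alpha$ meeting every part, and one then attaches the remaining vertices of each part. The only point requiring a small argument is how to absorb the vertices inside a single part $V_i$, since Theorem~\ref{th:Gallai} says nothing about the colors used \emph{within} a part. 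I would handle this by induction on $n$: the induced coloring on each part $G[V_i]$ is itself a Gallai-coloring, so by the induction hypothesis $G[V_i]$ has a connected monochromatic spanning subgraph; however, its color need not be $\alpha$, so to get a single spanning subgraph of all of $K_n$ in one color, I instead observe that since $t \geq 2$ each part $V_i$ is adjacent in color $\alpha$ to some vertex outside $V_i$, and hence every vertex of $V_i$ is joined in color $\alpha$ to that external vertex whenever the reduced edge from $V_i$ has color $\alpha$; the reduced connectivity in color $\alpha$ guarantees each part participates, so the union of all color-$\alpha$ inter-part edges already spans and connects $V(K_n)$. The main (though minor) obstacle is thus just this bookkeeping of showing every individual vertex, not merely every part, is reached by the color-$\alpha$ connected structure, which follows because color-$\alpha$ inter-part edges are complete bipartite between the relevant parts.
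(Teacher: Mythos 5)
Your proposal is correct and follows essentially the same route as the paper, which derives the corollary in one line from Theorem~\ref{th:Gallai} together with the fact that every 2-edge-coloring of a complete graph has a connected monochromatic spanning subgraph; you simply make explicit the lifting step (that color-$\alpha$ inter-part edges are complete bipartite, so reduced connectivity in color $\alpha$ reaches every vertex, not just every part) that the paper leaves implicit. The brief detour into induction on $n$ is unnecessary, as you yourself note before correctly discarding it.
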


We shall also use the following simple result in our proofs.

\noindent\begin{lemma}\label{le:Gallai}
Let $G$ be a Gallai-coloring of a complete graph, $V\subset V(G)$ and $v\in V(G)\setminus V$. Then there is at most one color on the edges between $v$ and $V$ that is not used on any edge within $V$ {\rm (}that is, $\left|C(v,V)\setminus C(V)\right|\leq 1${\rm )}.
\end{lemma}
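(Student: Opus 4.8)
The plan is to argue by contradiction using the defining property of a Gallai-coloring, namely the absence of rainbow triangles. Suppose the conclusion fails, so that $\left|C(v,V)\setminus C(V)\right|\geq 2$. Then there are two distinct colors, say $a$ and $b$, each appearing on some edge between $v$ and $V$ but on no edge inside $V$.

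First I would pick witnesses for these two colors: choose $x\in V$ with $c(vx)=a$ and $y\in V$ with $c(vy)=b$. Since a single edge carries a single color and $a\neq b$, the vertices $x$ and $y$ must be distinct, so $xy$ is a genuine edge lying inside $V$, and hence $c(xy)\in C(V)$. The key step is then to examine the triangle on $v$, $x$, $y$. Its three edges receive colors $c(vx)=a$, $c(vy)=b$, and $c(xy)$. Because $a,b\notin C(V)$ while $c(xy)\in C(V)$, the color $c(xy)$ differs from both $a$ and $b$; combined with $a\neq b$, all three edges of the triangle $vxy$ carry pairwise distinct colors. This produces a rainbow triangle, contradicting the assumption that $G$ is a Gallai-coloring, and the lemma follows.

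I do not anticipate a genuine obstacle here: the result is essentially a direct unpacking of the rainbow-triangle-free condition, and the only point requiring a moment's care is verifying that the two color witnesses $x$ and $y$ are distinct (which is forced by $a\neq b$) so that $xy$ is indeed an edge within $V$ whose color lies in $C(V)$. Everything else is immediate once the offending triangle $vxy$ is identified.
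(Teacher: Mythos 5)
Your proof is correct and is essentially the same argument as the paper's: both derive a rainbow triangle $vxy$ from two colors on $E(v,V)$ that are absent from $C(V)$, using that the edge $xy$ lies within $V$ so its color differs from both. You merely spell out two details the paper leaves implicit (that $x\neq y$ and that $c(xy)\in C(V)$ forces all three triangle colors to be distinct), which is fine.
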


\begin{proof} Suppose that $c(vu)=1$, $c(vw)=2$ and $1, 2\notin C(V)$, where $u, w\in V$. Then we may further assume that $c(uw)=3$. Now $\{u, v, w\}$ forms a rainbow triangle, a contradiction.
\end{proof}

Finally, we introduce the Lov\'{a}sz Local Lemma. Let $(\Omega, \mathcal{F}, {\rm Pr})$ be a probability space and let $A_1, A_2, \ldots, A_n$ be events. A graph $D$ with $V(D)=\{v_1, v_2, \ldots, v_n\}$ is called a {\it dependency graph} for events $A_1, A_2, \ldots A_n$ if for every $i$, the event $A_i$ is mutually independent of all $A_j$ with $v_iv_j\notin E(D)$ and $i\neq j$, i.e., $A_i$ is independent of any Boolean function of the events in $\{A_j\colon\, v_iv_j\notin E(D), i\neq j\}$. We shall use the following form of the Local Lemma due to Spencer.

\noindent\begin{lemma}\label{le:local} {\normalfont (Lov\'{a}sz Local Lemma \cite{ErLo, Spe})}
Let $A_1, A_2, \ldots, A_n$ be events in a probability space $(\Omega, \mathcal{F}, {\rm Pr})$ with dependency graph $D$. If there exist positive real numbers $y_1, y_2, \ldots, y_n$ such that for each $i$, $y_i {\rm Pr}(A_i)<1$ and $\ln y_i>\sum_{v_iv_j\in E(D)}y_j {\rm Pr}(A_j)$, then ${\rm Pr}(\wedge^{n}_{i=1}\overline{A}_i)>0$.
\end{lemma}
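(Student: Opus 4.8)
The plan is to prove this (the asymmetric Lov\'asz Local Lemma in Spencer's form) by the standard inductive argument: I would establish a uniform bound on the conditional probability of each event given an arbitrary conjunction of complemented events, and then assemble the conclusion by telescoping. The starting observation is the chain-rule identity
$${\rm Pr}\Big(\bigcap_{i=1}^n\overline{A}_i\Big)=\prod_{i=1}^n\Big(1-{\rm Pr}\big(A_i\,\big|\,\textstyle\bigcap_{j<i}\overline{A}_j\big)\Big),$$
so it suffices to keep each conditional probability bounded away from $1$ (while simultaneously checking, as part of the same induction, that every partial intersection has positive probability so that the conditionings are well defined). Concretely, I would prove by induction on $|S|$ the key estimate ${\rm Pr}\big(A_i\mid\bigcap_{j\in S}\overline{A}_j\big)\le c_i$ for every $i$ and every $S\subseteq[n]\setminus\{i\}$, where $c_i<1$ is a threshold built from $y_i$ and ${\rm Pr}(A_i)$; the hypothesis $y_i\,{\rm Pr}(A_i)<1$ is exactly what guarantees $c_i<1$.

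For the inductive step I would split $S$ into $S_1=\{j\in S:v_iv_j\in E(D)\}$, the neighbors of $i$ in the dependency graph, and $S_2=S\setminus S_1$, the non-neighbors, and write
$${\rm Pr}\Big(A_i\,\Big|\,\bigcap_{j\in S}\overline{A}_j\Big)=\frac{{\rm Pr}\big(A_i\cap\bigcap_{j\in S_1}\overline{A}_j\,\big|\,\bigcap_{l\in S_2}\overline{A}_l\big)}{{\rm Pr}\big(\bigcap_{j\in S_1}\overline{A}_j\,\big|\,\bigcap_{l\in S_2}\overline{A}_l\big)}.$$
The numerator is handled by the definition of the dependency graph: $A_i$ is mutually independent of the events indexed by $S_2$, so the numerator is at most ${\rm Pr}(A_i)$. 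The denominator I would expand as a product of factors of the form $1-{\rm Pr}\big(A_j\mid\cdots\big)$ over $j\in S_1$, where each conditioning set is strictly smaller than $S$, so that the inductive hypothesis applies and bounds each factor from below. After this bookkeeping the whole estimate collapses to a purely multiplicative inequality over the neighbors of $i$.

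This last inequality is where the logarithmic hypothesis enters, and it is the main obstacle: I would convert the additive bound $\ln y_i>\sum_{v_iv_j\in E(D)}y_j\,{\rm Pr}(A_j)$ into the multiplicative form needed to close the induction by exponentiating and applying $1+t\le e^t$ termwise, giving $\prod_{v_iv_j\in E(D)}\big(1+y_j\,{\rm Pr}(A_j)\big)\le e^{\sum_{v_iv_j\in E(D)}y_j\,{\rm Pr}(A_j)}<y_i$. This supplies exactly the slack the induction demands, and the delicate part of the write-up is matching the threshold $c_i$ to this conversion so that the closing inequality holds with the correct constant while keeping the conditioning sets genuinely shrinking. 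Once the induction is complete, feeding ${\rm Pr}\big(A_i\mid\bigcap_{j<i}\overline{A}_j\big)\le c_i$ back into the telescoping product gives ${\rm Pr}\big(\bigcap_{i=1}^n\overline{A}_i\big)\ge\prod_{i=1}^n(1-c_i)>0$, as required. Since the statement is classical and cited here, the argument essentially reproduces Spencer's, with the care concentrated in the log-to-product step.
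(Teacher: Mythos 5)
The paper itself offers no proof of this lemma --- it is quoted from Erd\H{o}s--Lov\'asz and Spencer --- so your attempt can only be measured against the classical argument, and as written it has a genuine gap at exactly the step you flag as delicate. Your skeleton is right (chain rule, induction over conditioning sets, splitting $S$ into neighbors $S_1$ and non-neighbors $S_2$, numerator bounded by ${\rm Pr}(A_i)$ via mutual independence), but after telescoping, the denominator is $\prod_{j\in S_1}(1-c_j)$, and closing the induction with uniform thresholds requires precisely ${\rm Pr}(A_i)\le c_i\prod_{v_iv_j\in E(D)}(1-c_j)$, i.e.\ the Lov\'asz $x$-condition with $x_i=c_i$. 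Your log-to-product conversion $\prod_{v_iv_j\in E(D)}\bigl(1+y_j{\rm Pr}(A_j)\bigr)\le e^{\sum y_j{\rm Pr}(A_j)}<y_i$ bounds the wrong product: what is needed is a \emph{lower} bound on $\prod(1-c_j)$, and $1-t\ge e^{-t}$ is false. Concretely, with $c_i=y_i{\rm Pr}(A_i)$ (the reading suggested by your remark that $y_i{\rm Pr}(A_i)<1$ guarantees $c_i<1$), the requirement $\prod_{v_iv_j\in E(D)}\bigl(1-y_j{\rm Pr}(A_j)\bigr)\ge 1/y_i$ does not follow from $\sum_{v_iv_j\in E(D)}y_j{\rm Pr}(A_j)<\ln y_i$, since $1-t<e^{-t}$ for $t>0$; with $c_i=\frac{y_i{\rm Pr}(A_i)}{1+y_i{\rm Pr}(A_i)}$ your inequality does lower-bound the denominator by $1/y_i$, but the step then delivers only ${\rm Pr}\bigl(A_i\mid\bigcap_{j\in S}\overline{A}_j\bigr)\le y_i{\rm Pr}(A_i)$, which misses the threshold by the self-factor $1+y_i{\rm Pr}(A_i)$, so the induction does not close.

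This is not repairable bookkeeping within your scheme: since the uniform-threshold induction is equivalent to the $x$-form of the local lemma, and Spencer's hypothesis is strictly more permissive than the $x$-form, no choice of constants $c_i$ can work. For instance, take two events joined by an edge of $D$ with ${\rm Pr}(A_1)={\rm Pr}(A_2)=0.3$ and $y_1=y_2=e$: then $y_i{\rm Pr}(A_i)=0.3e<1$ and $\ln y_i=1>0.3e$, so the stated lemma applies, yet no $x_1,x_2\in[0,1)$ satisfy $0.3\le x_1(1-x_2)$ and $0.3\le x_2(1-x_1)$ (multiplying gives $0.09\le x_1(1-x_1)x_2(1-x_2)\le 1/16$, a contradiction). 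What your argument does prove, essentially verbatim with $c_i=\frac{y_i{\rm Pr}(A_i)}{1+y_i{\rm Pr}(A_i)}$, is the weaker variant in which the sum in the hypothesis runs over the closed neighborhood, $\ln y_i>y_i{\rm Pr}(A_i)+\sum_{v_iv_j\in E(D)}y_j{\rm Pr}(A_j)$; there the base case uses $y_i>e^{y_i{\rm Pr}(A_i)}\ge 1+y_i{\rm Pr}(A_i)$ and the step closes via $y_i{\rm Pr}(A_i)e^{-y_i{\rm Pr}(A_i)}\le c_i$. To obtain the lemma as stated you must either reproduce Spencer's actual argument, whose inductive estimate is finer than a single uniform constant per event, or do as the paper does and cite the result.
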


\section{General upper and lower bounds}
\label{sec:ch-proof Kp}

Before proving Theorem \ref{th:q}, we first prove two lemmas. The proof ideas of Lemmas~\ref{le:K3} and \ref{le:reduce} below are from \cite{FoSu}. For an edge-colored $K_n$, a vertex $v\in V(K_n)$ and a color $i$, let $d_i(v)$ be the number of edges in color $i$ incident with $v$.

\noindent\begin{lemma}\label{le:K3}
If an edge-coloring of $K_n$ with $n\geq 4$ satisfies $d_i(v)\leq \frac{n}{4}$ for each $v\in V(K_n)$ and each color $i$, then there exists a rainbow copy of $K_3$.
\end{lemma}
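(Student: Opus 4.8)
The plan is to establish the statement by a double-counting argument over the triangles of $K_n$, showing directly that the number of \emph{non-rainbow} triangles (those receiving at most two colors) is strictly less than the total number $\binom{n}{3}$ of triangles; this forces at least one triangle to be rainbow. The bridge between the color-degree hypothesis and a count of non-rainbow triangles is the notion of a \emph{monochromatic cherry}: a pair of edges $\{uv, uw\}$ sharing a vertex $u$ and having the same color.

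First I would bound the number of non-rainbow triangles by the number of monochromatic cherries. Every triangle receiving at most two colors has two edges of a common color meeting at a vertex, hence contains at least one monochromatic cherry; conversely, each monochromatic cherry $\{uv,uw\}$ closes up to the unique triangle $uvw$. Thus the assignment sending a monochromatic cherry to its triangle surjects onto the set of non-rainbow triangles, giving that the number of non-rainbow triangles is at most the number of monochromatic cherries. The latter is exactly $\sum_{v\in V(K_n)}\sum_i \binom{d_i(v)}{2}$, where the inner sum runs over the colors $i$.

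The crux of the argument is to bound this quantity using the hypothesis $d_i(v)\le n/4$. At a fixed vertex $v$ I would use $\sum_i d_i(v) = n-1$ together with the elementary convexity-type inequality $\sum_i d_i(v)^2 \le \bigl(\max_i d_i(v)\bigr)\sum_i d_i(v)\le \frac{n}{4}(n-1)$, which yields $\sum_i\binom{d_i(v)}{2}\le \frac{(n-1)(n-4)}{8}$. Summing over the $n$ vertices bounds the total number of monochromatic cherries by $\frac{n(n-1)(n-4)}{8}$.

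Finally I would compare this with $\binom{n}{3}=\frac{n(n-1)(n-2)}{6}$: the inequality $\frac{n(n-1)(n-4)}{8}<\frac{n(n-1)(n-2)}{6}$ reduces, after cancelling the positive factor $n(n-1)$, to $6(n-4)<8(n-2)$, that is, $8<2n$, which holds for every $n\ge 4$. Hence the number of non-rainbow triangles is strictly smaller than $\binom{n}{3}$ and a rainbow $K_3$ must exist. The only genuinely delicate step is the cherry/triangle correspondence, where one must check that the inequality points in the useful direction (including the monochromatic-triangle case, in which a single triangle accounts for three cherries, so the map is still a surjection onto non-rainbow triangles); the degree bound and the closing arithmetic are then routine, and it is reassuring that the final inequality is strict for all $n\ge 4$ rather than merely asymptotically.
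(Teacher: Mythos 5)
Your proof is correct and follows essentially the same route as the paper: both bound the number of non-rainbow triangles by the count of monochromatic cherries $\sum_{v}\sum_{i}\binom{d_i(v)}{2}$, bound that sum per vertex using $d_i(v)\le n/4$ and $\sum_i d_i(v)=n-1$ (the paper via convexity of $x(x-1)/2$, you via $\sum_i d_i(v)^2\le\bigl(\max_i d_i(v)\bigr)\sum_i d_i(v)$, which in fact gives the marginally sharper per-vertex bound $\frac{(n-1)(n-4)}{8}$ instead of $\frac{n(n-4)}{8}$), and then compare with $\binom{n}{3}$. One trivial slip in your last line: $6(n-4)<8(n-2)$ simplifies to $-8<2n$, not $8<2n$ (the latter would fail at $n=4$); since the inequality you actually derived, $6(n-4)<8(n-2)$, does hold for every $n\ge 4$, the argument is unaffected.
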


\begin{proof} It suffices to show that the number of non-rainbow $K_3$'s is less than $\binom{n}{3}$. Note that for any vertex $v$ and any color $i$, there are at most $\binom{d_i(v)}{2}$ non-rainbow $K_3$'s with two edges in color $i$ incident with vertex $v$. Thus the number of non-rainbow $K_3$'s is at most $$\sum_{v\in V(K_n)}\sum_{i}\frac{d_i(v)(d_i(v)-1)}{2}\leq 4n\frac{(n/4)(n/4 -1)}{2}<\binom{n}{3},$$
where the first inequality holds since $\sum_{i}\frac{d_i(v)(d_i(v)-1)}{2}\leq 4\frac{(n/4)(n/4 -1)}{2}$ (using $0\leq d_i(v)\leq \frac{n}{4}$, $\sum_{i}d_i(v)= n-1$, and noting that the function $f(x)=\frac{x(x-1)}{2}$ is convex with $f(x)\geq f(1)=0$ for any $x\geq 1$).
\end{proof}

Let $[k]=\{1, 2, \ldots, k\}$ be a set of colors and $t_q=\sum^{q}_{i=1} \binom{k}{i}$. Let $\mathcal{I}=\{I\subseteq [k] : 1\leq \left|I\right|\leq q\} =\{I_1, I_2, \ldots, I_{t_q}\}$. Then we define $g^k_q(p_1, p_2, \ldots, p_{t_q})$ to be the smallest positive integer $n$ such that every Gallai-$k$-coloring of $K_n$ contains a copy of $K_{p_i}$ all edges of which have colors from one set $I_i$ for some $i$.

\noindent\begin{lemma}\label{le:reduce} We have
$$g^k_q(p_1, p_2, \ldots, p_{t_q})\leq 4\cdot \max_{1\leq i\leq k} g^k_q\left(p^{(i)}_1, p^{(i)}_2, \ldots, p^{(i)}_{t_q}\right),$$
where $p^{(i)}_j=p_j-1$ if $i\in I_j$, and $p^{(i)}_j=p_j$ otherwise.
\end{lemma}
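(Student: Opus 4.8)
The plan is to combine Lemma~\ref{le:K3} with the rigidity of Gallai-colorings to extract a large star in a single color, and then to grow a suitable subgraph inside the star's neighborhood. Set
$N = \max_{1\le i\le k} g^k_q\bigl(p^{(i)}_1,\ldots,p^{(i)}_{t_q}\bigr)$
and consider an arbitrary Gallai-$k$-coloring of $K_n$ with $n = 4N$. Since $g^k_q(p_1,\ldots,p_{t_q})$ is the smallest integer $n$ for which every Gallai-$k$-coloring of $K_n$ contains the desired copy, it suffices to produce such a copy in this $K_n$; that will give $g^k_q(p_1,\ldots,p_{t_q}) \le n = 4N$, which is exactly the claimed bound. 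Note also that $N\ge 1$, so $n = 4N \ge 4$, as required to apply Lemma~\ref{le:K3}.

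First I would invoke the contrapositive of Lemma~\ref{le:K3}. Because the coloring is a Gallai-coloring it has no rainbow triangle, so it cannot be the case that $d_i(v)\le n/4$ holds for every vertex $v$ and every color $i$. Hence there exist a vertex $v$ and a color $i$ with $d_i(v) > n/4 = N$. Let $S$ be the set of vertices joined to $v$ by an edge of color $i$. Then every edge of $E(v,S)$ has color $i$, and $|S| = d_i(v) \ge N+1 > N \ge g^k_q\bigl(p^{(i)}_1,\ldots,p^{(i)}_{t_q}\bigr)$. The induced subcoloring $G[S]$ is again a Gallai-$k$-coloring on $|S| \ge g^k_q\bigl(p^{(i)}_1,\ldots,p^{(i)}_{t_q}\bigr)$ vertices, so by the definition of $g^k_q$ it contains a copy $H$ of $K_{p^{(i)}_j}$ all of whose edges have colors from some set $I_j$.

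Next I would split into two cases according to whether $i\in I_j$. If $i\notin I_j$, then $p^{(i)}_j = p_j$, so $H$ is already a copy of $K_{p_j}$ with colors from $I_j$, and we are done. If $i\in I_j$, then $p^{(i)}_j = p_j-1$, and I would adjoin the vertex $v$ to $H$: since $V(H)\subseteq S$, every edge from $v$ to $H$ has color $i\in I_j$, so $H$ together with $v$ is a copy of $K_{p_j}$ all of whose edges use colors from $I_j$. In either case $K_n$ contains a copy of $K_{p_j}$ colored from $I_j$, completing the proof of the inequality.

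The hard part is not any single estimate but the bookkeeping behind the reduced parameters $p^{(i)}_j$: they are defined precisely so that the one extra color $i$ contributed by the edges from $v$ compensates for the $-1$ imposed on exactly those index sets $I_j$ that contain $i$, while leaving the targets for the sets $I_j$ with $i\notin I_j$ untouched. Checking that this matching is correct in both branches of the case split is the crux of the argument; once it is in place, everything else follows directly from Lemma~\ref{le:K3} and the defining property of $g^k_q$.
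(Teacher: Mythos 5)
Your proof is correct and takes essentially the same route as the paper: use Lemma~\ref{le:K3} (in contrapositive form, valid since the coloring is Gallai) to find a vertex $v$ and color $i$ with $d_i(v)>n/4$, then apply the definition of $g^k_q$ with the reduced parameters $p^{(i)}_j$ inside the monochromatic neighborhood of $v$, adjoining $v$ itself when $i\in I_j$. The only difference is that you spell out the final case split on whether $i\in I_j$, which the paper leaves implicit.
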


\begin{proof} Let $n\geq 4\cdot \max_{1\leq i\leq k} g^k_q\left(p^{(i)}_1, p^{(i)}_2, \ldots, p^{(i)}_{t_q}\right)$. By Lemma \ref{le:K3}, for every Gallai-coloring of $K_n$, there exists a vertex $v$ and a color $\ell$ with $d_{\ell}(v)> \frac{n}{4}$. Let $N_{\ell}(v)=\{u : c(uv)=\ell\}$. Then $\left|N_{\ell}(v)\right|> g^k_q\left(p^{(\ell)}_1, p^{(\ell)}_2, \ldots, p^{(\ell)}_{t_q}\right)$. In this case there is a copy of $K_{p_i}$ all edges of which have colors from one set $I_i$ for some $i$. This proves the statement of the lemma.
\end{proof}

Now we have all ingredients to present our proofs of Theorem~\ref{th:q} and Proposition~\ref{prop:Kp}.

\begin{proof}[Proof of Theorem~\ref{th:q}] Note that $g^k_q(p)=g^k_q(p, p, \ldots, p)$. We can repeatedly apply Lemma \ref{le:reduce} until in some step we get $g^k_q(p_1, p_2, \ldots, p_{t_q})\leq 2$. In each step, we have $g^k_q(p_1, p_2, \ldots, p_{t_q})\leq 4\cdot g^k_q\left(p^{(i)}_1, p^{(i)}_2, \ldots, p^{(i)}_{t_q}\right)$ for some $i$. For each $i\in [k]$, let $\alpha (i)$ be the number of steps in which we apply Lemma~\ref{le:reduce} for color $i$. By the definition of $g^k_q(p_1, p_2, \ldots, p_{t_q})$, we have $g^k_q(p_1, p_2, \ldots, p_{t_q})=1<2$ if $p_j=1$ for some $j\in [t_q]$. We also have $g^k_q(p_1, p_2, \ldots, p_{t_q})=2$ if $p_j=2$ for all $j\in [t_q]$ with $\left|I_j\right|=q$. Thus $\sum_{I\in \mathcal{I}, \left|I\right|=q}\sum_{i\in I} \alpha(i)\leq (p-2)\binom{k}{q}$. Then $\sum^{k}_{i=1} \alpha (i)=\frac{1}{\binom{k-1}{q-1}}\sum_{I\in \mathcal{I}, \left|I\right|=q}\sum_{i\in I} \alpha(i)\leq \frac{1}{\binom{k-1}{q-1}}\binom{k}{q}(p-2)=\frac{k(p-2)}{q}$. We conclude that $g^k_q(p)\leq4^{\frac{k(p-2)}{q}}\cdot 2= 2^{\frac{2k(p-2)}{q}+1}$, completing the proof of Theorem~\ref{th:q}.
\end{proof}

\begin{proof}[Proof of Proposition~\ref{prop:Kp}] The proof is similar to the proof of Theorem~\ref{th:q}. The only difference is that we repeatedly apply Lemma \ref{le:reduce} until in some step we get $g^k_1(p_1, p_2, \ldots, p_{t_q})<32$. Note that we have $g^k_1(2)=2\leq 32$, $g^k_1(2, \ldots, 2, 6)=6< 32$, $g^k_1(2, \ldots, 2, 3, 5)=R(K_3, K_5)=14< 32$ (\cite{GrGl}), $g^k_1(2, \ldots, 2, 4, 4)=R_2(K_4)=18<32$ (\cite{GrGl}), $g^k_1(2, \ldots, 2, 3, 3, 4)=g^3_1(3, 3, 4)=17< 32$ (\cite{LMSSS}) and $g^k_1(2, \ldots, 2, 3, 3, 3, 3)=g^4_1(3)=26<32$ (\cite{ChGr,GSSS}). Thus we have $\sum^{k}_{i=1} \alpha (i)\leq k(p-2)-4$ in this case, so $g^k_1(p)< 4^{k(p-2)-4}\cdot 32= 2^{2k(p-2)-3}$.
\end{proof}

In the rest of this section, we prove Proposition \ref{prop:sp}, using a similar method to that used in \cite{WMLC}.

\begin{proof}[Proof of Proposition~\ref{prop:sp}]
Consider a $k$-edge-coloring $G$ of $K_n$, where each edge receives color $i$ ($1\leq i\leq k-1$) with probability $\frac{r}{k-1}$ and
color $k$ with probability $1-r$ (for small $r$, to be determined shortly), and these probabilities are mutually independent. For each set $S$ of $s$ vertices, let $A_S$ be the event that $G[S]$ is a rainbow $K_s$. For each set $T$ of $p$ vertices, let $B_T$ be the event that $G[T]$ is a $q$-colored $K_p$. We shall show that ${\rm Pr}((\wedge_S \overline{A}_S)\wedge (\wedge_T \overline{B}_T))> 0$.

Define a graph $D$ with a vertex set corresponding to all possible $A_S$ and $B_T$ such that (the vertex corresponding to) $A_S$ is adjacent to (the vertex corresponding to) $B_T$ if and only if $\left|S\cap T\right|\geq 2$, and $A_S$ (resp., $B_T$) is adjacent to $A_{S'}$ (resp., $B_{T'}$) if and only if $\left|S\cap S'\right|\geq 2$ (resp., $\left|T\cap T'\right|\geq 2$). Then $D$ is a dependency graph. We define $N_{AA}$, $N_{AB}$, $N_{BA}$ and $N_{BB}$ such that $N_{XY}$ is the number of vertices in $D$ of type $Y$ (so corresponding either to a number of $A_S$ vertices or a number of $B_T$ vertices) adjacent to a fixed vertex of type $X$ (so either one $A_S$ vertex or one $B_T$ vertex). In order to be able to apply  Lemma~\ref{le:local}, for each $S$, let the positive real number $y_i=y$ correspond to event $A_S$, and for each $T$, let $y_i=z$ correspond to event $B_T$. By Lemma \ref{le:local}, to show that ${\rm Pr}((\wedge_S \overline{A}_S)\wedge (\wedge_T \overline{B}_T))> 0$, it suffices to show that there exist positive real numbers $r, y, z$ such that
\begin{equation}\label{eq:sp1}
  r<1,\ y{\rm Pr}(A_s)<1,\ z{\rm Pr}(B_T)<1,
\end{equation}
\begin{equation}\label{eq:sp2}
  \ln y> y{\rm Pr}(A_s)N_{AA}+ z{\rm Pr}(B_T)N_{AB},
\end{equation}
\begin{equation}\label{eq:sp3}
  \ln z> y{\rm Pr}(A_s)N_{BA}+ z{\rm Pr}(B_T)N_{BB}.
\end{equation}

Note that for $r$ small, we have
\begin{align*}
 {\rm Pr}(A_S)\leq &~\binom{k-1}{\binom{s}{2}}\binom{s}{2}!\left(\frac{r}{k-1}\right)^{\binom{s}{2}} +\binom{k-1}{\binom{s}{2}-1}\left(\binom{s}{2}-1\right)!\binom{s}{2}(1-r)\left(\frac{r}{k-1}\right)^{\binom{s}{2}-1}\\
   = &~\binom{s}{2}(k-1)(k-2)\cdots\left(k-\binom{s}{2}+1\right)\left(\frac{r}{k-1}\right)^{\binom{s}{2}-1} \left(\frac{k-\binom{s}{2}}{\binom{s}{2}}\cdot\frac{r}{k-1}+1-r\right)\\
  \leq &~Lr^{\binom{s}{2}-1}
\end{align*}
and
\begin{align*}
 {\rm Pr}(B_T)\leq &~\binom{k-1}{q}\left(\frac{qr}{k-1}\right)^{\binom{p}{2}} +\binom{k-1}{q-1}\left(1-r+\frac{(q-1)r}{k-1}\right)^{\binom{p}{2}}\\
   \leq &~\binom{k-1}{q}\left(\frac{r}{2}\right)^{\binom{p}{2}} +\binom{k-1}{q-1}\left(1-\frac{r}{2}\right)^{\binom{p}{2}}\\
   \leq &~\left(\binom{k-1}{q}+\binom{k-1}{q-1}\right)\left(1-\frac{r}{2}\right)^{\binom{p}{2}}\\
   \leq &~\binom{k}{q}\exp \left(-\frac{r}{2}\binom{p}{2}\right) =~\exp \left(-\frac{rp^2}{4}+\frac{rp}{4}+\ln \binom{k}{q}\right).
\end{align*}
We bound $N_{AA}$, $N_{AB}$, $N_{BA}$ and $N_{BB}$ as follows:
\begin{equation*}\label{eq:sp4}
  N_{AA}\leq \binom{s}{2}\binom{n-2}{s-2}\leq s^2n^{s-2},\ N_{AB}\leq \binom{s}{2}\binom{n-2}{p-2}\leq s^2n^{p-2},
\end{equation*}
\begin{equation*}\label{eq:sp5}
  N_{BA}\leq \binom{p}{2}\binom{n-2}{s-2}\leq p^2n^{s-2},\ N_{BB}\leq \binom{p}{2}\binom{n-2}{p-2}\leq p^2n^{p-2}.
\end{equation*}
Let $\alpha=(s-2)/\left(\binom{s}{2}-2.1\right)$ and $\beta=1/\left(\binom{s}{2}-1\right)$. We set
\begin{equation*}\label{eq:sp6}
  r=c_1n^{-\alpha}L^{-\beta},\ p=c_2n^{\alpha}(\ln n)L^{\beta},\ y=1+\epsilon,\ z=\exp \left(c_3n^{\alpha}(\ln n)^2L^{\beta}\right),
\end{equation*}
where $\epsilon \ll 1$, $c_1, c_2, c_3$ are appropriately chosen and $n$ tends to infinity. Then we have
\begin{align*}
 y{\rm Pr}(A_S)N_{AA}\leq &~(1+\epsilon)Lr^{\binom{s}{2}-1}s^2n^{s-2}=~(1+\epsilon)s^2c_1^{\binom{s}{2}-1}n^{\frac{-1.1(s-2)}{\binom{s}{2}-2.1}},
\end{align*}
\begin{align*}
 y{\rm Pr}(A_S)N_{BA}\leq &~(1+\epsilon)Lr^{\binom{s}{2}-1}p^2n^{s-2}=~(1+\epsilon)c_1^{\binom{s}{2}-1}c_2^2L^{2\beta}n^{\alpha-\frac{0.1(s-2)}{\binom{s}{2}-2.1}}(\ln n)^2,
\end{align*}
\begin{align*}
 z{\rm Pr}(B_T)N_{AB}\leq &~\exp \left(c_3n^{\alpha}(\ln n)^2L^{\beta}-\frac{rp^2}{4}+\frac{rp}{4}+\ln \binom{k}{q}+2\ln s+(p-2) \ln n\right)\\
 \leq &~\exp \left(c_3n^{\alpha}(\ln n)^2L^{\beta}-\frac{c_1c_2^2}{4}n^{\alpha}(\ln n)^2L^{\beta}+o(n^{\alpha}(\ln n)^2)+c_2n^{\alpha}(\ln n)^2L^{\beta}\right)\\
 \leq &~\exp \left(\left(c_3-\frac{c_1c_2^2}{4}+c_2+o(1)\right)n^{\alpha}(\ln n)^2L^{\beta}\right),
\end{align*}
and
\begin{align*}
 z{\rm Pr}(B_T)N_{BB}\leq &~\exp \left(c_3n^{\alpha}(\ln n)^2L^{\beta}-\frac{rp^2}{4}+\frac{rp}{4}+\ln \binom{k}{q}+2\ln p+(p-2) \ln n\right)\\
 \leq &~\exp \left(\left(c_3-\frac{c_1c_2^2}{4}+c_2+o(1)\right)n^{\alpha}(\ln n)^2L^{\beta}\right).
\end{align*}
If we choose $c_1, c_2, c_3$ such that $c_3-\frac{c_1c_2^2}{4}+c_2+o(1)<0$, then inequations (\ref{eq:sp1})-(\ref{eq:sp3}) hold. Setting $c=c_2$ in the above expression for $p$, and expressing $n$ in terms of $p$, we have
$$n\geq\left(\frac{(s-2)pL^{1/\left(1-\binom{s}{2}\right)}}{(c+o(1))\left(\binom{s}{2}-2.1\right) \ln \left(pL^{1/\left(1-\binom{s}{2}\right)}\right)}\right)^{\left(\binom{s}{2}-2.1\right)/(s-2)}.$$
\end{proof}

\section{Proofs of Theorems \ref{th:q=p-2} and \ref{th:q=sqrt}}
\label{sec:ch-proof q=p-2}

We first present our proof of Theorem~\ref{th:q=p-2}.

\begin{proof}[Proof of Theorem~\ref{th:q=p-2}] We first show that there is a Gallai-$k$-coloring of $K_{k+1}$, in which there is no $K_p$ receiving at most $p-2$ distinct colors. The case $k=p-2$ is trivial since $K_{p-1}$ contains no $K_p$. For $k\geq p-1$, let $V(K_{k+1})=\{v_1, v_2, \ldots, v_{k+1}\}$. For every $1\leq i< j\leq k+1$, we color the edge $v_iv_j$ using color $i$. Note that for any three vertices $v_i, v_j, v_k$ with $i< j< k$, we have $c(v_iv_j)=c(v_iv_k)$, so there are no rainbow triangles. For any $p$ vertices $v_{i_1}, v_{i_2}, \ldots, v_{i_p}$ with $i_1< i_2<\cdots <i_p$, we have $C(\{v_{i_1}, v_{i_2}, \ldots, v_{i_p}\})=\{i_1, i_2, \ldots, i_{p-1}\}$, so every $K_p$ receives $p-1$ distinct colors.

Next, we show that $g^k_{p-2}(p)\leq k+2$ by induction on $k$. For the base case, if $k=p-2$, then it is trivial that $g^k_{p-2}(p)=p$. Now assume that it holds for every $p-2\leq k'\leq k-1$, and we will prove it for $k$.

For a contradiction, suppose that $G$ is a Gallai-$k$-coloring of $K_{k+2}$ without a $(p-2)$-colored $K_p$. Using Theorem \ref{th:Gallai}, let $V_1, V_2, \ldots, V_m$ ($m\geq 2$) be a Gallai-partition of $V(G)$. Note that $m\leq p-1$ since $p-2\geq 2$. If $m\geq 4$, then we can choose nonempty subsets $V'_i\subseteq V_i$ ($1 \leq i\leq m$) such that $\sum^{m}_{i=1}\left|V'_i\right|=p$. Since $G$ is a Gallai-coloring, we have $\left|C(V'_i)\right|\leq \left|V'_i\right|-1$ ($1 \leq i\leq m$) by Theorem \ref{th:antiR}. Then $\left|C(\bigcup^{m}_{i=1} V'_i)\right|\leq 2+\sum^{m}_{i=1}(\left|V'_i\right|-1)=2+p-m\leq p-2$. Thus there is a $(p-2)$-colored $K_p$ in $G$, a contradiction. Hence, we have $m\leq 3$. Note that if $G$ contains a Gallai-partition with exactly three parts, then $G$ also contains a Gallai-partition with exactly two parts. Thus we may assume that $m=2$ and $c(V_1, V_2)=1$.

\noindent\begin{claim}\label{cl:q=p-2 1} $1\notin C(V_1)$ and $1\notin C(V_2)$.
\end{claim}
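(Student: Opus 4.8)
The plan is to argue by contradiction: assuming that color $1$ appears inside one of the parts, say on an edge $uv$ with $u,v\in V_1$, I would construct a $K_p$ spanning at most $p-2$ colors, contradicting the standing assumption that $G$ has no $(p-2)$-colored $K_p$. Since the roles of $V_1$ and $V_2$ are symmetric, it suffices to rule out $1\in C(V_1)$.

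The key observation is that color $1$ can be made to do double duty. All edges between $V_1$ and $V_2$ carry color $1$ (this is the defining property of the partition, $c(V_1,V_2)=1$), and now there is in addition an edge of color $1$ inside $V_1$. So I would select a subset $A\subseteq V_1$ of size $a$ containing both $u$ and $v$, together with a subset $B\subseteq V_2$ of size $b$ with $a+b=p$. Then Theorem~\ref{th:antiR} gives $|C(A)|\le a-1$ and $|C(B)|\le b-1$, while every $A$--$B$ edge has color $1$. Since $1\in C(A)$ already, the cross-edges contribute no new color, and the total number of colors on $G[A\cup B]$ is at most $(a-1)+(b-1)=p-2$. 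This $K_p$ is the desired $(p-2)$-colored clique.

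What remains is to check that admissible sizes $a,b$ actually exist. I need $2\le a\le |V_1|$ (to fit $u,v$ inside $A$), $1\le b\le |V_2|$ (so that at least one cross-edge of color $1$ is present), and $a+b=p$. Writing $a=p-b$, this reduces to the single requirement $\max\{2,\,p-|V_2|\}\le \min\{|V_1|,\,p-1\}$, which I would verify termwise from the facts $|V_1|\ge 2$ (as $u,v\in V_1$), $|V_2|\ge 1$, $p\ge 4$, and $|V_1|+|V_2|=k+2\ge p$ (using $k\ge p-1$ in the inductive step). Each of the four comparisons is immediate, so a valid pair $(a,b)$ exists and the construction goes through.

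I expect the only delicate point to be this last bookkeeping—specifically, guaranteeing that the color-$1$ edge can be kept inside $A$ while still leaving room for a nonempty $B$ and meeting the exact size $p$. Everything else is a direct application of the Gallai anti-Ramsey bound of Theorem~\ref{th:antiR} together with the fact that all $V_1$--$V_2$ edges share color $1$, which together force the one-color overlap that saves the final color.
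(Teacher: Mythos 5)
Your proof is correct and is essentially the paper's own argument: pick $V_1'\subseteq V_1$ containing the color-$1$ edge and $V_2'\subseteq V_2$ with $|V_1'|+|V_2'|=p$, apply Theorem~\ref{th:antiR} to each part, and note that the cross edges of color $1$ add no new color since $1\in C(V_1')$, giving at most $p-2$ colors on the resulting $K_p$. The only difference is that you spell out the size bookkeeping ($2\le a\le|V_1|$, $1\le b\le|V_2|$, $a+b=p$, using $|V_1|+|V_2|=k+2\ge p$), which the paper leaves implicit in the phrase ``we may choose.''
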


\begin{proof} By symmetry, we only prove $1\notin C(V_1)$. If $1\in C(V_1)$, then we may choose $V'_1\subseteq V_1$ and $V'_2\subseteq V_2$ such that $1\in C(V'_1)$ and $\left|V'_1\right|+\left|V'_2\right|=p$. Thus $\left|C(V'_1\cup V'_2)\right|\leq \left|C(V'_1)\right|+\left|C(V'_2)\right|\leq \left|V'_1\right|-1+\left|V'_2\right|-1=p-2$, a contradiction.
\end{proof}

\noindent\begin{claim}\label{cl:q=p-2 2} $\left|V_1\right|= \left|C(V_1)\right| +1$ and $\left|V_2\right|= \left|C(V_2)\right| +1$.
\end{claim}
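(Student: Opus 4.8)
The plan is to prove $|V_1| = |C(V_1)|+1$; the statement for $V_2$ then follows by the symmetry between the two parts. Since $G[V_1]$ is itself a Gallai-coloring of a complete graph on $|V_1|$ vertices, Theorem~\ref{th:antiR} immediately gives $|C(V_1)|\le |V_1|-1$, i.e. $|V_1|\ge |C(V_1)|+1$. So it suffices to establish the reverse inequality $|V_1|\le |C(V_1)|+1$, and I would do this by assuming for contradiction that $|V_1|\ge |C(V_1)|+2$ and then exhibiting a $(p-2)$-colored $K_p$, contradicting the choice of $G$.

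First I would record a color bound that makes the induction hypothesis available later. By Claim~\ref{cl:q=p-2 1} the color $1$ lies in neither $C(V_1)$ nor $C(V_2)$, so $\{1\}\cup C(V_1)$ consists of $|C(V_1)|+1$ distinct colors all used by $G$; as $G$ uses at most $k$ colors, this forces $|C(V_1)|\le k-1$. I would then split into two cases according to the size of $V_1$. If $|V_1|\le p$, I would take all of $V_1$ together with any $p-|V_1|$ vertices of $V_2$ (these exist because $|V_2|=(k+2)-|V_1|\ge p-|V_1|$, using $k\ge p-2$). The edges inside $V_1$ use at most $|C(V_1)|\le |V_1|-2$ colors, all edges from $V_1$ to $V_2$ use only color $1$, and the chosen vertices of $V_2$ span at most $p-|V_1|-1$ internal colors, so the resulting $K_p$ uses at most $(|V_1|-2)+1+(p-|V_1|-1)=p-2$ colors, a contradiction (when $|V_1|=p$ no vertex of $V_2$ is needed and the count is even smaller).

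The harder case, and the real obstacle, is $|V_1|\ge p+1$, where $V_1$ cannot be embedded into a single $K_p$ and the direct counting above breaks down. Here I would appeal to the induction hypothesis of the ongoing proof of Theorem~\ref{th:q=p-2}. The graph $G[V_1]$ is a Gallai-coloring of $K_{|V_1|}$ using exactly $|C(V_1)|$ colors and inheriting from $G$ the property of having no $(p-2)$-colored $K_p$, so by the definition of the function we get $|V_1|\le g^{|C(V_1)|}_{p-2}(p)-1$. To invoke the induction hypothesis I must check that $|C(V_1)|$ lies in the admissible range $[p-2,\,k-1]$: the upper bound $|C(V_1)|\le k-1$ was just established, while $|C(V_1)|\ge p-2$ holds because otherwise any $K_p$ inside the $(\ge p)$-vertex set $V_1$ would already be $(p-3)$-colored, hence $(p-2)$-colored, a contradiction. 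The induction hypothesis then gives $g^{|C(V_1)|}_{p-2}(p)\le |C(V_1)|+2$, whence $|V_1|\le |C(V_1)|+1$, contradicting $|V_1|\ge |C(V_1)|+2$.

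Combining the two cases yields $|V_1|=|C(V_1)|+1$, and symmetrically $|V_2|=|C(V_2)|+1$. The two delicate points to get right are the verification of the parameter range $p-2\le |C(V_1)|\le k-1$ before applying the induction, and the careful color bookkeeping in the embedding step; both rely crucially on Claim~\ref{cl:q=p-2 1}, which isolates the between-parts color $1$ from the colors used inside the parts.
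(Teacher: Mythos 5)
Your proof is correct and follows essentially the same strategy as the paper's: the anti-Ramsey lower bound, a contradiction hypothesis $|V_1|\geq |C(V_1)|+2$, a color-counting argument embedding $V_1$ plus part of $V_2$ into a $K_p$ in the small case, and an appeal to the induction hypothesis (enabled by Claim~\ref{cl:q=p-2 1} giving $|C(V_1)|\leq k-1$) in the large case. The only difference is cosmetic: you split cases on $|V_1|\lessgtr p$ while the paper splits on $|C(V_1)|\lessgtr p-2$, but the two dichotomies are interchanged by the same observation (a part with at least $p$ vertices and at most $p-3$ internal colors immediately yields a $(p-2)$-colored $K_p$), so the content is identical.
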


\begin{proof} By symmetry, we only prove it for $V_1$. By Theorem \ref{th:antiR}, we have $\left|V_1\right|\geq \left|C(V_1)\right| +1$, so it suffices to prove $\left|V_1\right|\leq \left|C(V_1)\right| +1$. Suppose for a contradiction that $\left|V_1\right|\geq \left|C(V_1)\right| +2$. If $\left|C(V_1)\right|\leq p-3$, then $\left|V_1\right|\leq p-1$ in order to avoid a $(p-2)$-colored $K_p$. Thus we can choose $V'_2\subseteq V_2$ with $\left|V_1\right|+\left|V'_2\right|=p$. Since $\left|C(V'_2)\right|\leq \left|V'_2\right|-1$, we have $\left|C(V_1\cup V'_2)\right|\leq 1+\left|C(V_1)\right|+\left|V'_2\right|-1\leq \left|C(V_1)\right|+p-\left|V_1\right|\leq \left|C(V_1)\right|+p-(\left|C(V_1)\right| +2)=p-2$, a contradiction. Thus $\left|C(V_1)\right|\geq p-2$, and then we have $\left|V_1\right|\leq \left|C(V_1)\right| +1$ by Claim \ref{cl:q=p-2 1} and the induction hypothesis.
\end{proof}

We now show that $C(V_1)\cap C(V_2)=\emptyset$. Otherwise, suppose $2\in C(V_1)\cap C(V_2)$. We choose $V'_1\subseteq V_1$ and $V'_2\subseteq V_2$ such that $2\in C(V'_1)$, $2\in C(V'_2)$ and $\left|V'_1\right|+\left|V'_2\right|=p$. Then $\left|C(V'_1\cup V'_2)\right|\leq 1+\left|C(V'_1)\right|+\left|C(V'_2)\right|-1 \leq \left|V'_1\right|-1+\left|V'_2\right|-1=p-2$, a contradiction. Finally, by Claims \ref{cl:q=p-2 1} and \ref{cl:q=p-2 2}, we have $k+2=\left|V(G)\right|=\left|V_1\right|+\left|V_2\right|=\left|C(V_1)\right|+1+\left|C(V_2)\right|+1\leq k-1+2=k+1$, a contradiction.
\end{proof}

In the following, instead of proving Theorem~\ref{th:q=sqrt}, we will prove the following more general result.

\noindent\begin{theorem}\label{th:q=sqrt+}
For integers $p\gg m\geq 2$ and $k\geq \left\lfloor \sqrt[m]{p-1}\right\rfloor-1$, we have $g^k_{\left\lfloor \sqrt[m]{p-1}\right\rfloor-1}(p)\geq (k+1)^m+1$.
\end{theorem}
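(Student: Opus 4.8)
The plan is to prove this lower bound by exhibiting, for $n=(k+1)^m$, an explicit Gallai-$k$-coloring of $K_n$ in which every $K_p$ receives at least $\lfloor\sqrt[m]{p-1}\rfloor=q+1$ distinct colors, where I abbreviate $q=\lfloor\sqrt[m]{p-1}\rfloor-1$; by the definition of $g^k_q(p)$ this gives $g^k_q(p)\ge (k+1)^m+1$. Writing $t=k+1$, I would build the coloring by iterating the extremal construction from Theorem~\ref{th:q=p-2}. Let $G_1$ be the Gallai-$k$-coloring of $K_t$ on $\{v_1,\dots,v_t\}$ in which $v_iv_j$ receives color $i$ whenever $i<j$ (exactly the coloring used in the proof of Theorem~\ref{th:q=p-2}, so the case $m=1$ recovers that result). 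For $m\ge 2$, I define $G_m$ by substituting a disjoint copy $H_i$ of $G_{m-1}$ into each vertex $v_i$ of $G_1$, coloring every edge between $H_i$ and $H_j$ (for $i<j$) with color $i$. This produces a coloring of $K_{t^m}$ that reuses only the palette $\{1,\dots,k\}$.

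Next I would verify that $G_m$ is a Gallai-coloring, which follows because substitution cannot create a rainbow triangle: any triangle of $G_m$ either lies inside a single copy $H_i$ (and so uses at most two colors by induction), has its three vertices in three distinct copies (and so projects to a triangle of the Gallai-coloring $G_1$), or has two vertices in one copy $H_i$ and the third in a copy $H_j$ — and in this last case the two edges leaving $H_j$ share the color $\min(i,j)$, so again at most two colors occur.

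The heart of the argument is a counting lemma that I would prove by induction on $m$: for every vertex subset $S$ of $G_m$, if the edges inside $S$ use a color set $C$, then $|S|\le(|C|+1)^m$. The base case $m=1$ is the computation in Theorem~\ref{th:q=p-2}, where a set of $s$ vertices of $G_1$ uses exactly its $s-1$ smallest indices as colors. For the inductive step I partition $S$ according to which copy $H_i$ its vertices occupy, with nonempty parts $S_{a_1},\dots,S_{a_r}$ and $a_1<\cdots<a_r$. The key observation is that the edges between distinct parts use precisely the $r-1$ pairwise distinct colors $a_1,\dots,a_{r-1}$, all contained in $C$, whence $r-1\le|C|$; meanwhile each $S_{a_i}$ sits in a copy of $G_{m-1}$ and uses a color set inside $C$, so $|S_{a_i}|\le(|C|+1)^{m-1}$ by induction. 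Multiplying, $|S|=\sum_{i=1}^r|S_{a_i}|\le r\,(|C|+1)^{m-1}\le(|C|+1)^m$.

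The main obstacle I anticipate is exactly the reuse of the single palette $\{1,\dots,k\}$ across all $m$ levels: a priori the colors used inside the copies could coincide with the between-copy colors and thereby permit a $q$-colored clique larger than $(q+1)^m$. The observation that resolves this is that the between-copy colors $a_1,\dots,a_{r-1}$ are automatically distinct and lie in $C$, capping the number of used copies at $r\le|C|+1$ regardless of any overlap with the within-copy colors; since the bound allows each copy to use all of $C$, the overlaps cannot be exploited. Granting the lemma, I finish as follows: with $q=\lfloor\sqrt[m]{p-1}\rfloor-1$ we have $q+1\le\sqrt[m]{p-1}$, so any subset of $G_m$ using at most $q$ colors has at most $(q+1)^m\le p-1<p$ vertices; hence $G_m$ contains no $K_p$ with at most $q$ colors (when $t^m<p$ this is vacuous, as $G_m$ then has no $K_p$ at all). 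Therefore $g^k_q(p)>t^m=(k+1)^m$, that is, $g^k_q(p)\ge(k+1)^m+1$. The hypothesis $p\gg m$ is needed only to place the parameters in the nontrivial range $1\le q\le p-2$ with $k\ge q$; concretely $p\ge 2^m+1$ already forces $\lfloor\sqrt[m]{p-1}\rfloor\ge 2$ and hence $q\ge 1$.
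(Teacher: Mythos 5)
Your proposal is correct and takes essentially the same route as the paper: both iterate the substitution (blow-up) of the extremal coloring from Theorem~\ref{th:q=p-2} into itself to obtain a Gallai-$k$-coloring of $K_{(k+1)^m}$ whose largest $q$-colored complete subgraph has order at most $(q+1)^m\leq p-1$. The only difference is presentational: the paper invokes the extremal coloring abstractly via $g^k_q(q+2)>k+1$ and leaves the Gallai property and the multiplicative clique bound as ``easy to check,'' whereas you instantiate the explicit coloring and supply those verifications, including the counting lemma $|S|\leq(|C|+1)^m$, in full.
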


\begin{proof} Let $q= \left\lfloor \sqrt[m]{p-1}\right\rfloor-1$. By Theorem \ref{th:q=p-2}, we have $g^k_{q}(q+2)>k+1$. Let $G_0$ be a Gallai-$k$-coloring of $K_{k+1}$ in which the largest $q$-colored complete subgraph has order at most $q+1$, and let $G_1=G_0$. Suppose that for some $1\leq i<m$ we have constructed a $k$-edge-coloring $G_i$ of $K_{(k+1)^i}$. Then we construct $G_{i+1}$ by substituting $k+1$ copies of $G_i$ into vertices of $G_0$. Finally, we obtain a $k$-edge-coloring $G_m$ of $K_{(k+1)^m}$. It is easy to check that $G_m$ is a Gallai-coloring and that the largest $q$-colored complete subgraph has order at most $(q+1)^m \leq p-1$. Thus we have $g^k_q(p)\geq (k+1)^m+1$.
\end{proof}

\section{Proof of Theorem \ref{th:q=p-3}}
\label{sec:ch-proof q=p-3}

For the lower bound, we will construct a Gallai-$k$-coloring of $K_{k+2}$ without a $(p-3)$-colored $K_p$. The case $k=p-3$ is trivial since $K_{p-1}$ contains no $K_p$. For $k\geq p-2$, let $V(K_{k+2})=\{v_1, v_2, \ldots, v_{k+2}\}$. For every $1\leq i\leq k$ and $i< j\leq k+2$, we color the edge $v_iv_j$ using color $i$, and we color the edge $v_{k+1}v_{k+2}$ with color $k$. Then we obtain a desired edge-coloring.

For the upper bound, we will use induction on $k$. For the base case, if $k=p-3$, then it is trivial that $g^{k}_{p-3}(p)\leq k+3$. Now assume that it holds for every $p-3\leq k'\leq k-1$, and we will prove it for $k$. For a contradiction, suppose that $G$ is a Gallai-$k$-coloring of $K_{k+3}$ without a $(p-3)$-colored $K_p$. By the induction hypothesis, we may assume that all the $k$ colors appear in $G$ (that is, $C(G)=[k]$). Using Theorem \ref{th:Gallai}, let $V_1, V_2, \ldots, V_m$ ($m\geq 2$) be a Gallai-partition of $V(G)$. We choose it such that $m$ is minimum.

\medskip\noindent
{\bf Case 1.} $m\geq 4$.
\vspace{0.05cm}

\noindent
In this case, by the minimality of $m$, there are exactly two colors used between the parts, say colors 1 and 2. If $m\geq 5$, then we can choose one vertex $v_i$ from each $V_i$ ($1\leq i\leq 5$) to form a 2-colored $K_5$. Then we choose another $p-5$ vertices $v_{6}, v_{7}, \ldots, v_{p}$ one by one arbitrarily. Note that for each $6\leq i\leq p$, when we add vertex $v_i$ to $G_{i-1}=G[\{v_1, v_2, \ldots, v_{i-1}\}]$, we add at most one new color that is not used in $G_{i-1}$, by Lemma \ref{le:Gallai}. Thus we obtain a $(p-3)$-colored $K_p$, a contradiction. Hence, we have $m=4$.

\noindent\begin{claim}\label{cl:q=p-3 1} For any $i\in [4]$, we have $1, 2\notin C(V_i)$. For any $1\leq i<j\leq 4$, we have $C(V_i)\cap C(V_j)=\emptyset$.
\end{claim}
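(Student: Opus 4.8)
The plan is to prove both assertions of Claim~\ref{cl:q=p-3 1} by the same kind of ``double-counting saving'' argument used in the proof of Theorem~\ref{th:q=p-2}. We are in the case $m=4$ with exactly the two between-colors $1$ and $2$, so every edge joining distinct parts has color in $\{1,2\}$. The naive bound on the palette of a copy of $K_p$ spread across chosen subsets $V_i'\subseteq V_i$ is $2+\sum_{i=1}^4(|V_i'|-1)=p-2$; the point is that whenever a between-color reappears inside a part, or a single color appears inside two different parts, that color is double-counted, so we can shave off one more color and drop from $p-2$ to $p-3$, producing the forbidden $(p-3)$-colored $K_p$. Throughout I use $|V(G)|=k+3\geq p$ (since $k\geq p-3$) and the anti-Ramsey bound $|C(V_i')|\leq|V_i'|-1$ of Theorem~\ref{th:antiR}.

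For the first assertion, suppose for contradiction that $1\in C(V_1)$; all other cases follow by relabeling the parts and by the symmetry between colors $1$ and $2$. First I would pick $V_1'\subseteq V_1$ containing both endpoints of a color-$1$ edge, together with nonempty $V_j'\subseteq V_j$ for $j=2,3,4$, arranged so that $\sum_{i=1}^4|V_i'|=p$. Then, writing down the inclusion $C(\bigcup_i V_i')\subseteq\{1,2\}\cup\bigcup_{i=1}^4 C(V_i')$ and using $1\in C(V_1')$ to get $|\{1,2\}\cup C(V_1')|\leq|C(V_1')|+1\leq|V_1'|$, I would conclude $|C(\bigcup_i V_i')|\leq|V_1'|+\sum_{j=2}^4(|V_j'|-1)=p-3$, contradicting that $G$ has no $(p-3)$-colored $K_p$.

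For the second assertion, suppose a color $a$ lies in $C(V_i)\cap C(V_j)$ for some $1\leq i<j\leq 4$; by the first assertion $a\notin\{1,2\}$. I would pick $V_i'$ and $V_j'$ each containing the endpoints of a color-$a$ edge, plus nonempty subsets of the other two parts, again of total size exactly $p$. Since the between-edges use at most the two colors $\{1,2\}$ (disjoint from all the $C(V_\ell')$ by the first assertion) while $a$ is counted in both $C(V_i')$ and $C(V_j')$, I would estimate $|C(\bigcup_\ell V_\ell')|\leq 2+|\bigcup_\ell C(V_\ell')|\leq 2+\bigl(\sum_\ell|C(V_\ell')|-1\bigr)\leq 2+(p-4-1)=p-3$, which is again the desired contradiction.

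The color arithmetic above is routine; the one point needing genuine care is the feasibility of the size selection. In the first case the forced vertices number at most $2+1+1+1=5$, and in the second at most $2+2+1+1=6$, so I must check that this handful fits below $p$ while the total $k+3$ stays at or above $p$, so that subsets summing to exactly $p$ and carrying the required repeated color can always be extracted. This is precisely where the hypotheses $p\geq 8$ and $k\geq p-3$ enter, and it is the main thing to verify; everything else reduces to the anti-Ramsey bound of Theorem~\ref{th:antiR}.
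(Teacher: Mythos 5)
Your proposal is correct and follows essentially the same route as the paper: choose nonempty subsets $V'_\ell\subseteq V_\ell$ summing to exactly $p$ that capture the repeated color, then apply the anti-Ramsey bound $|C(V'_\ell)|\leq|V'_\ell|-1$ of Theorem~\ref{th:antiR} and save one color from the double count to reach $p-3$. The feasibility check you flag (forced vertices $\leq 5$ resp.\ $6$, with $p\geq 8$ and $k+3\geq p$) is indeed the only detail the paper leaves implicit, and it goes through exactly as you indicate.
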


\begin{proof} If $C(V_i)\cap \{1,2\}\neq \emptyset$ for some $i\in [4]$, then we can choose nonempty subsets $V'_l\subseteq V_l$ ($1 \leq l\leq 4$) such that $\sum^{4}_{l=1}\left|V'_l\right|=p$ and $C(V'_i)\cap \{1,2\}\neq \emptyset$. Since $G$ is a Gallai-coloring, we have $\left|C(V'_l)\right|\leq \left|V'_l\right|-1$ ($1 \leq l\leq 4$) by Theorem \ref{th:antiR}. Then $|C(\bigcup^{4}_{l=1} V'_l)|\leq 2+(\sum^{4}_{l=1}\left|C(V'_l)\right|)-1\leq 2+(\sum^{4}_{l=1}(\left|V'_l\right|-1))-1=2+p-4-1= p-3$. Thus there is a $(p-3)$-colored $K_p$ in $G$, a contradiction. If $C(V_i)\cap C(V_j)\neq \emptyset$ for some $1\leq i<j\leq 4$, say $c_0\in C(V_i)\cap C(V_j)$, then we can choose nonempty subsets $V'_l\subseteq V_l$ ($1 \leq l\leq 4$) such that $\sum^{4}_{l=1}\left|V'_l\right|=p$, $c_0\in C(V'_i)$ and $c_0\in C(V'_j)$. Then $|C(\bigcup^{4}_{l=1} V'_l)|\leq 2+(\sum^{4}_{l=1}\left|C(V'_l)\right|)-1\leq 2+ (\sum^{4}_{l=1} (\left|V'_l\right|-1))-1=p-3$. Thus there is a $(p-3)$-colored $K_p$ in $G$, a contradiction.
\end{proof}

\noindent\begin{claim}\label{cl:q=p-3 2} For any $i\in [4]$, we have $\left|V_i\right|\leq \left|C(V_i)\right|+1$.
\end{claim}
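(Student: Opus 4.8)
The statement to prove is Claim~\ref{cl:q=p-3 2}: in the $m=4$ case of the Gallai-partition of $G$, each part satisfies $\left|V_i\right|\leq \left|C(V_i)\right|+1$. This is the direct analogue of Claim~\ref{cl:q=p-2 2} from the proof of Theorem~\ref{th:q=p-2}, so the plan is to mimic that argument, adapting the arithmetic to the fact that we now forbid $(p-3)$-colored (rather than $(p-2)$-colored) copies of $K_p$ and that there are four parts with two colors between them rather than two parts with one color.

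**Overall strategy.** By Theorem~\ref{th:antiR} applied to $G[V_i]$ we already have $\left|V_i\right|\geq \left|C(V_i)\right|+1$, so only the upper bound $\left|V_i\right|\leq \left|C(V_i)\right|+1$ needs work. I would argue by contradiction, supposing some part, say $V_1$, has $\left|V_1\right|\geq \left|C(V_1)\right|+2$. The plan splits into two regimes depending on the size of $\left|C(V_1)\right|$, exactly as in Claim~\ref{cl:q=p-2 2}. First, if $\left|C(V_1)\right|$ is small, I would build a $(p-3)$-colored $K_p$ directly: take all of $V_1$ together with a few carefully chosen vertices from the other parts so that the total is $p$ vertices, and count colors. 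The colors used split as the $\le 2$ between-part colors, plus $\left|C(V_1)\right|$ inside $V_1$, plus the colors contributed by the vertices borrowed from $V_2,V_3,V_4$; using $\left|C(V_j')\right|\le \left|V_j'\right|-1$ and Claim~\ref{cl:q=p-3 1} (disjointness of the $C(V_i)$ and avoidance of colors $1,2$), the total should come out to at most $p-3$, contradicting our assumption that $G$ has no such $K_p$. This pins down a threshold on $\left|C(V_1)\right|$, and in the complementary regime where $\left|C(V_1)\right|$ is large I would invoke the induction hypothesis on $k$ (since $G[V_1]$ uses fewer than $k$ colors and avoids the same forbidden configuration) to conclude $\left|V_1\right|\leq \left|C(V_1)\right|+1$ directly.

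**Bookkeeping and the main obstacle.** The key quantitative step is choosing the right size threshold for $\left|C(V_1)\right|$ so that the two regimes meet. In the small-color regime I must guarantee $\left|V_1\right|\le p-1$ (so that I can actually pad up to $p$ vertices using the other three parts) and that the resulting color count is at most $p-3$; the $-3$ here, versus the $-2$ in Theorem~\ref{th:q=p-2}, comes from the two between-part colors overlapping or from the $-1$ in the Gallai bound applied across four parts, so I expect the bound $\left|C(V_1)\right|\le p-4$ to be the natural cutoff. The main obstacle will be the careful arithmetic of the color count across four parts: I must make sure the two between-part colors $1,2$ are counted exactly once and are genuinely disjoint from every $C(V_i)$ (which is precisely what Claim~\ref{cl:q=p-3 1} supplies), and that the inequalities $\sum \left|C(V_i')\right|\le \sum(\left|V_i'\right|-1)$ telescope to leave the right slack. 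The only subtle point beyond routine counting is verifying that in the large-color regime the induction hypothesis genuinely applies, i.e.\ that $G[V_1]$ is a Gallai-$k'$-coloring with $k'<k$ and contains no $(p-3)$-colored $K_p$; this follows because any such $K_p$ inside $V_1$ would already be one in $G$, and $V_1$ cannot use all $k$ colors once $\left|C(V_1)\right|$ is bounded appropriately together with the disjointness from the other parts and from $\{1,2\}$.
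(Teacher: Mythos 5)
There is a genuine gap in your ``large regime.'' The induction hypothesis available here is the statement of Theorem~\ref{th:q=p-3} for fewer colors, namely $g^{k'}_{p-3}(p)\leq k'+3$ for $p-3\leq k'\leq k-1$; applied to $G[V_1]$ (which uses $|C(V_1)|\leq k-2$ colors by Claim~\ref{cl:q=p-3 1} and avoids a $(p-3)$-colored $K_p$) it yields only $|V_1|\leq |C(V_1)|+2$, not $|V_1|\leq |C(V_1)|+1$ as you assert. You appear to be transplanting Claim~\ref{cl:q=p-2 2}, where the induction hypothesis of Theorem~\ref{th:q=p-2} (threshold $k+2$) does give the matching ``$+1$'' bound; in the present theorem the threshold is $k+3$, so induction alone falls exactly one short, and the equality case $|V_1|=|C(V_1)|+2$ remains open. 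The same shortfall appears at your proposed cutoff: when $|C(V_1)|=p-4$, avoiding a $(p-3)$-colored $K_p$ only forces $|V_1|\leq p-2=|C(V_1)|+2$, which is too large for your padding argument (you need $|V_1|\leq p-3$ to borrow a nonempty subset from each of the three other parts, and borrowing only two vertices gives a count of $2+|C(V_1)|=p-2$, not $p-3$). So your two regimes together prove only $|V_i|\leq |C(V_i)|+2$.

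The paper closes the equality case with an idea your proposal lacks. If $|V_1|=|C(V_1)|+2$ with $|C(V_1)|\geq p-4$, then $G[V_1]$ is a Gallai-$|C(V_1)|$-coloring of $K_{|C(V_1)|+2}$, so the already-proved Theorem~\ref{th:q=p-2} (applied inside $V_1$, which is legitimate since $g^{k'}_{p_1-2}(p_1)=k'+2$ for all $4\leq p_1\leq k'+2$) produces a $(p_1-2)$-colored $K_{p_1}$ in $G[V_1]$ for every $4\leq p_1\leq |V_1|$; in particular a $(p-5)$-colored $K_{p-3}$, using $p\geq 8$. Adding one vertex from each of $V_2$, $V_3$, $V_4$ then gives a $K_p$ whose only new colors are the at most two between-part colors, hence a $(p-3)$-colored $K_p$, a contradiction. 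Without this step (or a substitute), the claim as stated is not established; the correct split is $|C(V_1)|\leq p-5$ for the direct padding (which forces $|V_1|\leq p-4$), with both $|C(V_1)|=p-4$ and $|C(V_1)|\geq p-3$ funneled into the equality-case argument above.
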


\begin{proof} Suppose for a contradiction that $\left|V_i\right|\geq \left|C(V_i)\right|+2$ for some $i\in [4]$, say $i=1$. If $\left|C(V_1)\right|\leq p-5$, then $\left|V_1\right|\leq p-4$ in order to avoid a $(p-3)$-colored $K_p$. Thus we can choose nonempty subsets $V'_j\subseteq V_j$ ($2\leq j\leq 4$) such that $\left|V_1\right|+\sum^{4}_{j=2}|V'_j|=p$. Then $|C(V_1\cup (\bigcup^{4}_{j=2}V'_j))|\leq 2+\left|C(V_1)\right|+\sum^{4}_{j=2}(|V'_j|-1)\leq 2+\left|C(V_1)\right|+(p-\left|V_1\right|)-3\leq 2+\left|C(V_1)\right|+p-(\left|C(V_1)\right| +2)-3=p-3$, a contradiction.

If $\left|C(V_1)\right|\geq p-3$, then $\left|V_1\right|\leq \left|C(V_1)\right|+2$ by Claim \ref{cl:q=p-3 1} and the induction hypothesis. If $\left|C(V_1)\right|= p-4$, then $\left|V_1\right|\leq p-2=\left|C(V_1)\right|+2$ in order to avoid a $(p-3)$-colored $K_p$. Thus $\left|V_1\right|= \left|C(V_1)\right|+2$ whenever $\left|C(V_1)\right|\geq p-4$. By Theorem \ref{th:q=p-2}, there is a $(p_1-2)$-colored $K_{p_1}$ in $G[V_1]$ for every $4\leq p_1\leq \left|V_1\right|$. Let $H$ be a copy of a $(p-5)$-colored $K_{p-3}$ in $G[V_1]$. Then we can choose one vertex from each $V_j$ ($2\leq j\leq 4$) such that these vertices together with $H$ form a $(p-3)$-colored $K_p$, a contradiction.
\end{proof}

By Claims \ref{cl:q=p-3 1} and \ref{cl:q=p-3 2}, we have $k+3= \left|V(G)\right|=\sum^{4}_{i=1}\left|V_i\right|\leq \sum^{4}_{i=1}(\left|C(V_i)\right|+1)\leq k-2+4=k+2$, a contradiction.

\medskip\noindent
{\bf Case 2.} $2\leq m\leq 3$.
\vspace{0.05cm}

\noindent
By the minimality of $m$, we may assume that $m=2$ and $c(V_1, V_2)=1$.

\noindent\begin{claim}\label{cl:q=p-3 3'} At most one of $V_1$ and $V_2$ contains an edge with color 1.
\end{claim}

\begin{proof} If $1\in C(V_1)$ and $1\in C(V_2)$, then we can choose $V'_1\subseteq V_1$ and $V'_2\subseteq V_2$ such that $\left|V'_1\right|+\left|V'_2\right|=p$, $1\in C(V'_1)$ and $1\in C(V'_2)$. Then $\left|C(V'_1\cup V'_2)\right|\leq \left|C(V'_1)\right|+\left|C(V'_2)\right|-1\leq \left|V'_1\right|-1+\left|V'_2\right|-1-1=p-3$, a contradiction.
\end{proof}

\noindent\begin{claim}\label{cl:q=p-3 3} We have $\left|V_i\right|=\left|C(V_i)\right|+1$ and $\left|V_{3-i}\right|=\left|C(V_{3-i})\right|+2$ for some $i\in [2]$.
\end{claim}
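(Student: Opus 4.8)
The plan is to prove Claim~\ref{cl:q=p-3 3} by combining the upper-bound inequality $\left|V_i\right|\leq \left|C(V_i)\right|+2$ (which follows from the analysis available to us, along the lines of Claim~\ref{cl:q=p-3 2} but now in the two-part setting) with a lower bound forcing the total to be large enough to match $k+3=\left|V(G)\right|=\left|V_1\right|+\left|V_2\right|$. The key structural input is Claim~\ref{cl:q=p-3 3'}, which tells us that color~$1$ appears inside at most one of the two parts; say without loss of generality $1\notin C(V_1)$. Since $1\in C(V_1,V_2)$ but $1\notin C(V_1)$, Lemma~\ref{le:Gallai} and Theorem~\ref{th:antiR} will let me relate $\left|C(V_1)\right|$, $\left|C(V_2)\right|$, and the global count $k=\left|C(G)\right|$. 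First I would establish the basic bounds $\left|V_i\right|\leq\left|C(V_i)\right|+2$ for each $i$ (avoiding a $(p-3)$-colored $K_p$ exactly as in Case~1 and Claim~\ref{cl:q=p-3 2}, using the induction hypothesis for the large-$\left|C(V_i)\right|$ regime).

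**Counting colors globally.**
Next I would count the colors. Because $c(V_1,V_2)=1$ is the only color on cross edges and $C(G)=[k]$ by the reduction at the start, I would argue that $[k]=C(V_1)\cup C(V_2)\cup\{1\}$ and, crucially, determine the overlaps. Claim~\ref{cl:q=p-3 3'} gives $1\notin C(V_1)$. I expect to also show $C(V_1)\cap C(V_2)$ is small (analogous to the disjointness argument for $C(V_1)\cap C(V_2)=\emptyset$ used in the two-part case of Theorem~\ref{th:q=p-2}), so that $k\leq \left|C(V_1)\right|+\left|C(V_2)\right|+1$, possibly with a correction of size at most one depending on whether $1\in C(V_2)$. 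Combining this color count with the size bounds, $\left|V_1\right|+\left|V_2\right|\leq\left|C(V_1)\right|+\left|C(V_2)\right|+4$, and comparing against $\left|V_1\right|+\left|V_2\right|=k+3$, I can pin down $\left|C(V_1)\right|+\left|C(V_2)\right|$ and hence force each size gap $\left|V_i\right|-\left|C(V_i)\right|$ to its extreme value. The arithmetic should show that the two gaps cannot both be $1$ (the total would be too small to reach $k+3$) and cannot both be $2$ (forcing the total too large, contradicting the color count), leaving exactly the asymmetric configuration in the statement: one part with gap $1$ and the other with gap $2$.

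**The main obstacle.**
The hardest part will be establishing each gap bound $\left|V_i\right|\leq\left|C(V_i)\right|+2$ in the regime where $\left|C(V_i)\right|$ is small, since there the induction hypothesis on $g^{k'}_{p-3}$ does not directly apply and one must instead run the explicit extremal argument (build a $(p-3)$-colored $K_p$ by taking all of $V_i$ together with a suitably sized subset of the other part, using Lemma~\ref{le:Gallai} to control how many new colors each added cross edge contributes). Getting the boundary cases $\left|C(V_i)\right|\in\{p-5,p-4,p-3\}$ to line up cleanly — as was needed in Claim~\ref{cl:q=p-3 2} — is the delicate bookkeeping, and I would treat those thresholds separately, invoking Theorem~\ref{th:q=p-2} inside $G[V_i]$ to extract a $(p_1-2)$-colored $K_{p_1}$ of the right size and extend it across the partition. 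Once the gap bounds and the color count are in place, the final step is purely arithmetic: substitute into $\left|V_1\right|+\left|V_2\right|=k+3$ and read off that exactly one gap equals $1$ and the other equals $2$.
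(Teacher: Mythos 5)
Your first step (the two-sided bounds $\left|C(V_i)\right|+1\leq \left|V_i\right|\leq \left|C(V_i)\right|+2$) and your exclusion of the case where both gaps equal $1$ are sound and essentially reproduce the paper's reasoning: the paper likewise derives $\left|V_2\right|\leq \left|C(V_2)\right|+2$ by splitting on $1\leq\left|C(V_2)\right|\leq p-4$, $p-3\leq \left|C(V_2)\right|\leq k-1$ and $\left|C(V_2)\right|=k$, and it kills the both-gaps-$1$ case exactly by the overlap arguments you sketch (if $1\notin C(V_1)\cup C(V_2)$ then $\left|C(V_1)\cap C(V_2)\right|\leq 1$, and if $1\in C(V_i)$ then $C(V_1)\cap C(V_2)=\emptyset$, since otherwise one packs two repeated colors, or color $1$ together with a repeated color, into a single $K_p$ and saves two colors). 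One caution: full disjointness $C(V_1)\cap C(V_2)=\emptyset$ is not available at this stage --- in the paper it is Claim~\ref{cl:q=p-3 4}, proved after and using Claim~\ref{cl:q=p-3 3} --- but the weaker bounds above already give $\left|C(V_1)\right|+\left|C(V_2)\right|\leq k$, which is all that the both-gaps-$1$ exclusion needs.

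The genuine gap is your assertion that the case $\left|V_1\right|=\left|C(V_1)\right|+2$ and $\left|V_2\right|=\left|C(V_2)\right|+2$ ``contradicts the color count.'' It does not. In that case $\left|C(V_1)\right|+\left|C(V_2)\right|=(k+3)-4=k-1$, which is perfectly consistent with $C(G)=[k]$: it simply forces $C(V_1)\cap C(V_2)=\emptyset$ and $1\notin C(V_1)\cup C(V_2)$, and no counting argument rules this configuration out. The paper excludes it structurally, in the first paragraph of its proof of Claim~\ref{cl:q=p-3 3}: if $\left|V_i\right|\geq \left|C(V_i)\right|+2$ for both $i$, then $\left|V_i\right|\geq 3$, and inside each $G[V_i]$ one can find a $(p_i-2)$-colored $K_{p_i}$ for suitable $3\leq p_i\leq \left|V_i\right|$ with $p_1+p_2=p$, either trivially when $G[V_i]$ is monochromatic, or by Theorem~\ref{th:q=p-2} (which applies precisely because $\left|V_i\right|\geq \left|C(V_i)\right|+2$) when $\left|C(V_i)\right|\geq 2$. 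Since all cross edges carry the single color $1$, the union of these two cliques is a $K_p$ receiving at most $(p_1-2)+(p_2-2)+1=p-3$ colors, a contradiction. This gluing step is also where the hypothesis $p\geq 8$ enters, so that both $p_i$ can be taken at least $4$ when needed. Without it, your argument leaves the symmetric gap-$2$/gap-$2$ configuration alive, and the claim does not follow.
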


\begin{proof} Recall that $\left|V_i\right|\geq \left|C(V_i)\right|+1$ for each $i\in [2]$ by Theorem \ref{th:antiR}. First suppose that $\left|V_i\right|\geq \left|C(V_i)\right|+2$ for all $i\in [2]$. Note that for each $i\in [2]$, since $\left|V_i\right|\geq 2$, we have $\left|C(V_i)\right|\geq 1$ and thus $\left|V_i\right|\geq 3$. Moreover, if $\left|C(V_1)\right|= 1$ (resp., $\left|C(V_2)\right|= 1$), then $G[V_1]$ (resp., $G[V_2]$) is a monochromatic complete subgraph of order at least $3$, and if $\left|C(V_1)\right|\geq 2$ (resp., $\left|C(V_2)\right|\geq 2$), then $G[V_1]$ (resp., $G[V_2]$) contains a $(p'-2)$-colored $K_{p'}$ for every $4\leq p'\leq \left|V_1\right|$ (resp., $4\leq p'\leq \left|V_2\right|$) by Theorem \ref{th:q=p-2}. Thus we can choose a $(p_i-2)$-colored $K_{p_i}$ in $G[V_i]$ for each $i\in [2]$ such that $3\leq p_i\leq \left|V_i\right|$ and $p_1+p_2=p$, so there is a $(p-3)$-colored $K_p$ in $G$, a contradiction. Hence, we may assume that $\left|V_1\right|=\left|C(V_1)\right|+1$ without loss of generality.

If $\left|V_2\right|= \left|C(V_2)\right|+1$, then $k+3=\left|V_1\right|+\left|V_2\right|=\left|C(V_1)\right|+\left|C(V_2)\right|+2$, so $\left|C(V_1)\right|+\left|C(V_2)\right|=k+1$. Then $C(V_1)\cap C(V_2)\neq \emptyset$. Let $C'= C(V_1)\cap C(V_2)$, and we have $1\notin C'$ by Claim \ref{cl:q=p-3 3'}. If $1\notin C(V_1)$ and $1\notin C(V_2)$, then $\left|C'\right|\geq 2$ (otherwise we have $\left|C(V_1)\right|+\left|C(V_2)\right|\leq k$). Then we can choose $V'_1\subseteq V_1$ and $V'_{2}\subseteq V_{2}$ with $\left|V'_1\cup V'_2\right|=p$ such that $\left|C(V'_1)\cap C(V'_2)\right|\geq 2$. Then $\left|C(V'_1\cup V'_2)\right|\leq 1+\left|C(V'_1)\right|+\left|C(V'_2)\right|-2\leq 1+\left|V'_1\right|-1+\left|V'_2\right|-1-2=p-3$, a contradiction. Hence, without loss of generality, we may assume that $1\in C(V_1)$, $c_0\in C'$ and $c_0\neq 1$. Then we can choose $V'_1\subseteq V_1$ and $V'_{2}\subseteq V_{2}$ with $\left|V'_1\cup V'_2\right|=p$ such that $\{1, c_0\}\subseteq C(V'_1)$ and $c_0\in C(V'_{2})$. Then $\left|C(V'_1\cup V'_2)\right|\leq \left|C(V'_1)\right|+\left|C(V'_2)\right|-1\leq \left|V'_1\right|-1+\left|V'_2\right|-1-1=p-3$, a contradiction. Therefore, we have $\left|V_2\right|\geq \left|C(V_2)\right|+2$.

If $1\leq \left|C(V_2)\right|\leq p-4$, then $\left|V_2\right|\leq p-2$ in order to avoid a $(p-3)$-colored $K_p$. Let $V'_1\subseteq V_1$ such that $\left|V'_1\cup V_2\right|=p$. Then $\left|V_2\right|=p-\left|V'_1\right|\leq p-(\left|C(V'_1)\right|+1)\leq p-(\left|C(V'_1\cup V_2)\right|-\left|C(V_2)\right|)\leq p-(p-2-\left|C(V_2)\right|)=\left|C(V_2)\right|+2$, where the second inequality is by $\left|C(V'_1\cup V_2)\right|\leq 1+\left|C(V'_1)\right|+\left|C(V_2)\right|$, and the last inequality follows from the assumption that $G$ contains no $(p-3)$-colored $K_p$. If $p-3\leq \left|C(V_2)\right|\leq k-1$, then by the induction hypothesis we have $\left|V_2\right|\leq \left|C(V_2)\right|+2$. If $\left|C(V_2)\right|=k$, then $\left|V_2\right|=k+3-\left|V_1\right|\leq k+2=\left|C(V_2)\right|+2$. Therefore, we have $\left|V_2\right|=\left|C(V_2)\right|+2$.
\end{proof}

By Claim \ref{cl:q=p-3 3}, we may assume that $\left|V_1\right|= \left|C(V_1)\right|+1$ and $\left|V_2\right|= \left|C(V_2)\right|+2$ without loss of generality.

\noindent\begin{claim}\label{cl:q=p-3 4} $C(V_1)\cap C(V_2)=\emptyset$.
\end{claim}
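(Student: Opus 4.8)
The plan is to prove that $C(V_1) \cap C(V_2) = \emptyset$ by contradiction, following the same mechanism used in Claim~\ref{cl:q=p-3 1} and in the corresponding step of the proof of Theorem~\ref{th:q=p-2}: a shared color between the two parts lets us save an extra color when building a $K_p$, pushing the total below $p-3$. Recall the structure now in force: $m=2$, $c(V_1,V_2)=1$, and by Claims~\ref{cl:q=p-3 3'} and~\ref{cl:q=p-3 3} we have $|V_1|=|C(V_1)|+1$, $|V_2|=|C(V_2)|+2$, with color $1$ appearing in at most one of the two parts. Suppose for contradiction that there is a color $c_0 \in C(V_1)\cap C(V_2)$.

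First I would handle the main case $c_0 \neq 1$. The idea is to select subsets $V_1'\subseteq V_1$ and $V_2'\subseteq V_2$ with $|V_1'|+|V_2'|=p$ such that $c_0\in C(V_1')$ and $c_0\in C(V_2')$; this is possible because each part realizes its own color set on a complete subgraph (Theorem~\ref{th:antiR} together with the tight sizes from Claim~\ref{cl:q=p-3 3}, which let us pick a spanning-enough subgraph carrying $c_0$). Then the edges between $V_1'$ and $V_2'$ all carry color $1$, and we get
\[
\left|C(V_1'\cup V_2')\right|\leq 1+\left|C(V_1')\right|+\left|C(V_2')\right|-1\leq \left|V_1'\right|-1+\left|V_2'\right|-1 = p-2.
\]
This only gives $p-2$, not $p-3$, so the bare overlap is not yet enough; the extra saving must come from the color $1$ between the parts coinciding with a color already counted inside. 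This is exactly where the hypothesis $1\in C(V_1)$ or $1\in C(V_2)$ (from Claim~\ref{cl:q=p-3 3'}) should be exploited: if $1\notin C(V_1)\cup C(V_2)$ then the cross-color $1$ is genuinely new and we must instead arrange $|C(V_1')\cap C(V_2')|\geq 2$, whereas if say $1\in C(V_1)$ we pick $V_1'$ to realize both $1$ and $c_0$ so that the cross-edges contribute no new color beyond $C(V_1')$, yielding the decisive drop to $p-3$.

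The delicate point, and the main obstacle, is guaranteeing that the desired subsets $V_i'$ of the prescribed sizes actually exist while carrying the required colors. This is a selection argument: I would argue that because $|V_i|$ is tied to $|C(V_i)|$ by Claim~\ref{cl:q=p-3 3}, any color in $C(V_i)$ is witnessed by a complete subgraph small enough to fit inside the size budget, and that we can grow such a subgraph up to any order $p_i\leq|V_i|$ while keeping the color count at most $p_i-2$ (invoking Theorem~\ref{th:q=p-2} inside each part, precisely as in Claim~\ref{cl:q=p-3 3}). The bookkeeping must be arranged so that the two budgets sum to exactly $p$; I expect to split into subcases according to whether color $1$ lies in $C(V_1)$, in $C(V_2)$, or (impossible by Claim~\ref{cl:q=p-3 3'}) in both, mirroring the case split already carried out in Claim~\ref{cl:q=p-3 3}. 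In each subcase the computation collapses to a one-line inequality of the form above terminating in $p-3$, contradicting the standing assumption that $G$ has no $(p-3)$-colored $K_p$, and hence forcing $C(V_1)\cap C(V_2)=\emptyset$.
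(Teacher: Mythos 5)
There is a genuine gap, and it sits exactly where the real difficulty of this claim lies. You misread Claim~\ref{cl:q=p-3 3'}: it says that \emph{at most} one of $V_1$, $V_2$ contains an edge of color $1$, not that at least one does. The statement ``either $1\in C(V_1)$ or $1\in C(V_2)$'' is only derived in the paper \emph{after} Claim~\ref{cl:q=p-3 4}, by combining the disjointness with the count $|C(V_1)|+|C(V_2)|=k$; you cannot assume it while proving Claim~\ref{cl:q=p-3 4} without circularity. As a result, your two mechanisms cover only the easy cases: (i) $1\in C(V_1)$ (say) together with a shared color $c_0\neq 1$, and (ii) $1\notin C(V_1)\cup C(V_2)$ with $|C(V_1)\cap C(V_2)|\geq 2$. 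The case that remains --- $1\notin C(V_1)\cup C(V_2)$ and $C(V_1)\cap C(V_2)=\{c_0\}$ a single color --- is untouched: there you cannot arrange $|C(V_1')\cap C(V_2')|\geq 2$, and as you yourself computed, the bare overlap argument only reaches $p-2$. This case is not vacuous (it is consistent with all earlier claims and with $C(G)=[k]$), and it is precisely the case occupying almost all of the paper's proof.

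Concretely, the paper closes that case in two stages, neither of which is a ``one-line inequality.'' If $|V_2|\leq p-2$, one takes \emph{all} of $V_2$ together with a set $V_1'\subseteq V_1$ containing a $c_0$-edge, exploiting the deficiency-two identity $|C(V_2)|=|V_2|-2$ from Claim~\ref{cl:q=p-3 3} to get $1+(|V_1'|-1)+(|V_2|-2)-1=p-3$; this mechanism (using the whole part $V_2$, not a generic subset bounded via Theorem~\ref{th:antiR}) is absent from your proposal. If instead $|V_2|\geq p-1$, there is no direct counting argument at all: the paper applies the induction hypothesis to $G[V_2\cup\{u,v\}]$ (where $uv$ is a $c_0$-edge in $V_1$) to force $C(V_1)=\{c_0\}$ and $|V_1|=2$, and then runs a delicate extension argument using a $(p-5)$-colored $K_{p-3}$ and a $(p-6)$-colored $K_{p-4}$ inside $G[V_2]$ (via Theorem~\ref{th:q=p-2}) together with Lemma~\ref{le:Gallai}; this is also where the hypothesis $p\geq 8$ enters, which your argument never uses --- a warning sign, since the surrounding theorem fails for $p=7$. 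So the proposal correctly reproduces the paper's first paragraph but omits the core of the proof.
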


\begin{proof} For a contradiction, suppose that $C'= C(V_1)\cap C(V_2)\neq \emptyset$. Similar to the second paragraph in the proof of Claim \ref{cl:q=p-3 3}, we have $1\notin C(V_1)$, $1\notin C(V_2)$ and $\left|C'\right|=1$, say $C'=\{c_0\}$.

If $\left|V_2\right|\leq p-2$, then we can choose $V'_1\subseteq V_1$ such that $\left|V'_1\cup V_2\right|=p$ and $c_0\in C(V'_1)$. Now we have $\left|C(V'_1\cup V_2)\right|\leq 1+\left|C(V'_1)\right|+\left|C(V_2)\right|-1\leq 1+(\left|V'_1\right|-1)+(\left|V_2\right|-2)-1=p-3$, a contradiction. Thus $\left|V_2\right|\geq p-1$ and $\left|C(V_2)\right|=\left|V_2\right|-2\geq p-3$. Let $uv$ be an edge within $V_1$ with $c(uv)=c_0$. Then $G[V_2\cup\{u,v\}]$ is a $(\left|C(V_2)\right|+1)$-colored $K_{\left|V_2\right|+2}$. If $\left|C(V_1)\right|\geq 2$, then $\left|C(V_2\cup\{u,v\})\right|\leq k-1$, and thus we can derive a contradiction by the induction hypothesis. Thus we have $C(V_1)=\{c_0\}$ and $\left|V_1\right|=2$.

By Theorem \ref{th:q=p-2}, we may assume that $H$ is a copy of a $(p-5)$-colored $K_{p-3}$ in $G[V_2]$. If $c_0\in C(H)$, then $G[V(H)\cup V_1]$ is a $(p-4)$-colored $K_{p-1}$. For any vertex $w\in V_2\setminus V(H)$, we have $\left|C(w, V(H)\cup V_1)\setminus C(V(H)\cup V_1)\right|\leq 1$ by Lemma \ref{le:Gallai}, which implies a $(p-3)$-colored $K_{p}$, a contradiction. If $c_0\notin C(H)$ and there is an edge $xy$ with color $c_0$ such that $x\in V(H)$ and $y\in V_2\setminus V(H)$, then $C(y, V(H))\setminus C(H)=\{c_0\}$ by Lemma \ref{le:Gallai}. Then $G[V(H)\cup V_1\cup \{y\}]$ is a $(p-3)$-colored $K_{p}$, a contradiction. Hence, $G[V_2]$ contains no edge in color 1 which has an end-vertex in $V(H)$. Thus we may assume that $xy$ is an edge with color $c_0$ such that $x, y\in V_2\setminus V(H)$. By Theorem \ref{th:q=p-2}, we may further assume that $H'$ is a copy of a $(p-6)$-colored $K_{p-4}$ in $H$. By Lemma \ref{le:Gallai}, we have $\left|C(x, V(H'))\setminus C(H')\right|\leq 1$ and $C(y, V(H')\cup\{x\})\setminus C(V(H')\cup\{x\})=\{c_0\}$. Then $G[V(H')\cup V_1\cup \{x, y\}]$ is a $(p-3)$-colored $K_{p}$, a contradiction.
\end{proof}

By Claim \ref{cl:q=p-3 3}, we have $\left|C(V_1)\right|+\left|C(V_2)\right|=\left|V_1\right|+\left|V_2\right|-3=k$. Then we have either $1\in C(V_1)$ or $1\in C(V_2)$ by Claims \ref{cl:q=p-3 3'} and \ref{cl:q=p-3 4}. We first consider the case $1\in C(V_1)$ and $1\notin C(V_2)$. We define a subset $V'_2\subseteq V_2$ as follows. If $\left|V_2\right|\leq p-3$, then $V'_2=V_2$. If $\left|V_2\right|\geq p-2$, then we choose $V'_2$ such that $G[V'_2]$ is a $(p-4)$-colored $K_{p-2}$ (using Theorem \ref{th:q=p-2}). Then let $V'_1\subset V_1$ such that $\left|V'_1\right|=p-\left|V'_2\right|$ and $1\in C(V'_1)$. Since $\left|C(V'_1\cup V'_2)\right|\leq \left|C(V'_1)\right|+\left|C(V'_2)\right|\leq \left|V'_1\right|-1+\left|V'_2\right|-2=p-3$, we derive a contradiction. Next, we consider the case $1\notin C(V_1)$ and $1\in C(V_2)$. In this case, we have $\left|V_2\right|\geq p$, since otherwise if $\left|V_2\right|\leq p-1$, then we can choose $V'_1\subset V_1$ with $\left|V'_1\cup V_2\right|=p$ such that $\left|C(V'_1\cup V_2)\right|\leq \left|C(V'_1)\right|+\left|C(V_2)\right|\leq \left|V'_1\right|-1+\left|V_2\right|-2=p-3$, a contradiction.

By Theorem \ref{th:q=p-2}, we may assume that $H$ is a copy of a $(p-4)$-colored $K_{p-2}$ in $G[V_2]$. Let $u$ be any vertex in $V_1$. If $1\in C(H)$, then $G[V(H)\cup\{u\}]$ is a $(p-4)$-colored $K_{p-1}$. For any vertex $v\in V_2\setminus V(H)$, we have $\left|C(v, V(H)\cup\{u\})\setminus C(V(H)\cup\{u\})\right|\leq 1$ by Lemma \ref{le:Gallai}, which implies a $(p-3)$-colored $K_{p}$, a contradiction. If $1\notin C(H)$ and there is an edge $xy$ with color $1$ such that $x\in V(H)$ and $y\in V_2\setminus V(H)$, then $C(y, V(H))\setminus C(H)=\{1\}$ by Lemma \ref{le:Gallai}. Then $G[V(H)\cup \{u, y\}]$ is a $(p-3)$-colored $K_{p}$, a contradiction. If $1\notin C(H)$ and $G[V_2]$ contains no edge with color $1$ incident with a vertex of $H$, then we may assume that $xy$ is an edge with color $1$ such that $x, y\in V_2\setminus V(H)$. By Theorem \ref{th:q=p-2}, we may further assume that $H'$ is a copy of a $(p-5)$-colored $K_{p-3}$ in $H$. By Lemma \ref{le:Gallai}, we have $\left|C(x, V(H'))\setminus C(H')\right|\leq 1$ and $C(y, V(H')\cup\{x\})\setminus C(V(H')\cup\{x\})=\{1\}$. Then $G[V(H')\cup \{x, y, u\}]$ is a $(p-3)$-colored $K_{p}$. This contradiction completes the proof of Theorem \ref{th:q=p-3}. \qed

\noindent\begin{remark}\label{re:q=p-3}
The bound $p\geq 8$ in Theorem~\ref{th:q=p-3} is best possible. Indeed, if $p=7$, then we can show that $g^5_4(7)>8$ by the following counterexample. Let $G_1$ (resp., $G_2$) be a $K_4$ using colors 1 and 2 (resp., colors 3 and 4) such that colors 1 and 2 (resp., colors 3 and 4) induce two monochromatic copies of a $P_4$. Let $G$ be a 5-colored $K_8$ obtained by joining $G_1$ and $G_2$ using edges that all get color 5. It is easy to check that $G$ contains neither a rainbow $K_3$ nor a 4-colored $K_7$. For the case that $p=6$, we can prove that $g^4_3(6)=8$ and $g^5_3(6)=10$ (the proofs are given in the appendices). When $p=5$, the function $g^k_2(5)$ is exponential in $k$ by Theorem~\ref{th:p=5}.
\end{remark}

\section{Proofs of Theorems \ref{th:p=5} and \ref{th:q=log}}
\label{sec:ch-proof p=5}

We first present our proof of Theorem \ref{th:p=5}.

\begin{proof}[Proof of Theorem \ref{th:p=5}] We first show that there is a Gallai-$k$-coloring of $K_{2^k}$, in which there is no $K_5$ receiving
at most two distinct colors. For $k=2$, let $G_2$ be an edge-coloring of $K_4$ with colors 1 and 2 such that color 1 induces a perfect matching and color 2 induces a $C_4$. It is easy to check that there is neither a rainbow $K_3$ nor a monochromatic $K_3$ in $G_2$, and $G_2$ contains no 2-colored $K_5$ clearly. Suppose for some $2\leq i\leq k-1$ we have constructed a Gallai-$i$-coloring $G_i$ of $K_{2^i}$ in which there is neither a monochromatic $K_3$ nor a 2-colored $K_5$. Then we construct an $(i+1)$-edge-coloring $G_{i+1}$ of $K_{2^{i+1}}$ by joining two copies of $G_{i}$ with edges that all get color $i+1$. Since $G_{i}$ contains no rainbow $K_3$, there is no rainbow $K_3$ in $G_{i+1}$. Since $G_{i}$ contains neither a monochromatic $K_3$ nor a 2-colored $K_5$, there is no 2-colored $K_5$ in $G_{i+1}$. By repeating this process, we finally obtain a Gallai-$k$-coloring $G_k$ of $K_{2^k}$ without a 2-colored $K_5$.

We now prove that $g^k_2(5)\leq2^k+1$ by induction on $k$. For the base case, it is trivial that $g^2_2(5)=5$. Now assume that it holds for every $2\leq k'\leq k-1$, and we will prove it for $k\geq 3$.

For a contradiction, suppose that $G$ is a Gallai-$k$-coloring of $K_{2^k+1}$ without a 2-colored $K_5$. Using Theorem \ref{th:Gallai}, let $V_1, V_2, \ldots, V_m$ ($m\geq 2$) be a Gallai-partition of $V(G)$. We choose it such that $m$ is minimum. Since there is no 2-colored $K_5$, we have $m\leq 4$. If $m=4$, then by the minimality of $m$, there are exactly two colors used between the parts, say colors 1 and 2. In order to avoid a 2-colored $K_5$, there is no edge with color 1 or 2 within each part. If $k=3$, then there is only color 3 within these parts. Note that $\max_{1\leq i< j\leq 4}\left|V_i\cup V_j\right|\geq 5$, so there is a 2-colored $K_5$. Thus $k\geq 4$. By the induction hypothesis, we have $\left|V(G)\right|=\left|V_1\right|+\left|V_2\right|+\left|V_3\right|+\left|V_4\right|\leq 4\cdot 2^{k-2}=2^k$, a contradiction.

Thus $2\leq m\leq 3$, and by the minimality of $m$ we may assume $m=2$ and $c(V_1, V_2)=1$. If $1\notin C(V_1)$ and $1\notin C(V_2)$, then by the induction hypothesis, we have $\left|V(G)\right|=\left|V_1\right|+\left|V_2\right|\leq 2^{k-1}+2^{k-1}=2^k$, a contradiction. If $1\in C(V_1)$ and $1\in C(V_2)$, then $G$ contains a monochromatic $K_4$. By Lemma \ref{le:Gallai}, $G$ contains a $2$-colored $K_5$, a contradiction. Thus we may assume that $1\in C(V_1)$ and $1\notin C(V_2)$ without loss of generality.

\noindent\begin{claim}\label{cl:p=5 2} Color 1 induces a bipartite graph within $V_1$.
\end{claim}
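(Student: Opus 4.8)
The plan is to argue by contradiction: assuming the color-$1$ subgraph of $G[V_1]$ is not bipartite, it must contain an odd cycle, and I would fix a \emph{shortest} one, say $C=v_1v_2\cdots v_{2\ell+1}$ with every edge $v_iv_{i+1}$ (indices taken mod $2\ell+1$) colored $1$. The goal in each case is to exhibit a $2$-colored $K_5$, contradicting the standing assumption that $G$ has none, and the two tools throughout are the no-rainbow-triangle condition and the minimality of $C$ (any color-$1$ chord would close a color-$1$ triangle or short-cut $C$ into a strictly shorter odd cycle).

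First I would dispose of the triangle case $\ell=1$. Since the Gallai-partition has two nonempty parts, $V_2\neq\emptyset$, so I pick $w\in V_2$; as $c(V_1,V_2)=1$, the set $\{v_1,v_2,v_3,w\}$ is a monochromatic (color-$1$) $K_4$. Because $|V(G)|=2^k+1\ge 9$ for $k\ge 3$, there is a fifth vertex $v$, and Lemma~\ref{le:Gallai} applied to $V=\{v_1,v_2,v_3,w\}$ gives $|C(v,V)\setminus\{1\}|\le 1$, so $\{v_1,v_2,v_3,w,v\}$ uses at most two colors, a $2$-colored $K_5$.

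For the main case $\ell\ge 2$ I would work inside the five consecutive vertices $v_1,\dots,v_5$ and show that all six non-edge pairs among them receive one common color $a:=c(v_1v_3)$, so that $\{v_1,\dots,v_5\}$ is a $2$-colored $K_5$. The mechanism is bookkeeping: a chord already known to be colored $a$ forces, via no-rainbow, an adjacent pair into $\{1,a\}$, and then the minimality observation rules out color $1$, pinning it to $a$. Running this through the triangles $v_1v_3v_4$, $v_1v_2v_4$, $v_1v_4v_5$, $v_2v_4v_5$ and $v_2v_3v_5$ determines $c(v_1v_4)=c(v_2v_4)=c(v_1v_5)=c(v_2v_5)=c(v_3v_5)=a$.

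The step I expect to be most delicate is exactly this chord analysis, and in particular the need to separate $\ell=2$ from $\ell\ge 3$: when $\ell=2$ the pair $v_1v_5$ is an \emph{edge} of $C$, hence colored $1$, so the deduction for $c(v_3v_5)$ must instead route through the triangle $v_1v_3v_5$ using $c(v_1v_5)=1$; whereas for $\ell\ge 3$ the pair $v_1v_5$ is a genuine chord forced to color $a$, since otherwise $v_1v_2v_3v_4v_5v_1$ would be a color-$1$ $5$-cycle strictly shorter than $C$. Keeping track of which short-cut cycle is odd (and shorter) in each individual deduction is the only subtle point; once the five vertices are seen to use just colors $1$ and $a$, the contradiction is immediate and everything else is routine.
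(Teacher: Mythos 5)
Your proof is correct, but it takes a genuinely different route from the paper's for the main case. The paper also begins by excluding color-$1$ triangles exactly as you do (monochromatic $K_4$ plus Lemma~\ref{le:Gallai}), but it then proves an auxiliary structural fact — $G[V_1]$ contains no $C_4$ with exactly three edges of color $1$, again by adjoining a vertex of $V_2$ to get a $2$-colored $K_5$ — and kills an \emph{arbitrary} odd color-$1$ cycle $a_1a_2\cdots a_{2t+1}$ by chaining this fact along the cycle: $c(a_1a_4)=1$, hence $c(a_1a_6)=1$, \ldots, $c(a_1a_{2t})=1$, so $\{a_1,a_{2t},a_{2t+1}\}$ is a color-$1$ triangle, contradicting the first step. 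No minimality of the cycle is used. You instead fix a \emph{shortest} odd cycle, use minimality to forbid color-$1$ chords (any such chord closes a strictly shorter odd color-$1$ cycle, since the two cycles a chord creates have lengths summing to $2\ell+3$, one odd and both shorter), and then propagate a single color $a=c(v_1v_3)$ through the chords of a five-vertex window via rainbow-freeness, producing a $2$-colored $K_5$ entirely inside $V_1$; your separation of $\ell=2$ (where $v_1v_5$ is a cycle edge, so $c(v_3v_5)$ is deduced from the triangle $v_1v_3v_5$) from $\ell\geq 3$ is exactly the right care, and all five (resp.\ six) deductions check out. The trade-off: the paper's chaining argument needs the extra $C_4$ lemma and leans on $V_2$ in both sub-steps, but works for any odd cycle; yours needs minimality but is more local, and after the triangle case it never touches $V_2$ at all.
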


\begin{proof} We first show that $G[V_1]$ contains no monochromatic $K_3$ with color 1. Otherwise, suppose $\{u, v, w\}$ forms a monochromatic $K_3$ with color 1 within $V_1$. Then for any vertex $x\in V_2$, we have that $\{u, v, w, x\}$ forms a monochromatic $K_4$. By Lemma \ref{le:Gallai}, there is a 2-colored $K_5$ in $G$, a contradiction.

We next show that $G[V_1]$ contains no $C_4$ with exactly three edges in color 1. Otherwise, if $G[V_1]$ contains such $C_4$, say $c(uv)=c(vw)=c(wz)=1$ and $c(zu)=2$. In order to avoid a rainbow $K_3$, we have $c(uw)\in \{1, 2\}$ and $c(vz)\in \{1, 2\}$. Then for any vertex $x\in V_2$, we have that $\{u, v, w, z, x\}$ forms a 2-colored $K_5$, a contradiction.

Finally, we show that $G[V_1]$ contains no monochromatic odd cycle in color 1 (thus color 1 induces a bipartite graph within $V_1$). Suppose that $C_{2t+1}=a_1a_2\cdots a_{2t+1}a_1$ ($t\geq 2$) is a monochromatic cycle using color 1 in $G[V_1]$. Since there is no $C_4$ with exactly three edges in color 1, we have $c(a_1a_4)=1$, so $c(a_1a_6)=1$, $c(a_1a_8)=1, \cdots, c(a_1a_{2t})=1$. Then $\{a_1, a_{2t}, a_{2t+1}\}$ forms a monochromatic $K_3$ in color 1, a contradiction.
\end{proof}

Let $E_1$ be the set of edges with color 1 in $G[V_1]$, and let $V'_1\subseteq V_1$ be the set of vertices incident with some edge of $E_1$. By Claim \ref{cl:p=5 2}, we may partition $V'_1$ into two parts $A$ and $B$ such that $1\notin C(A)$ and $1\notin C(B)$. Since $1\in C(V_1)$, we have $A\neq \emptyset$ and $B\neq \emptyset$. Let $V''_1=V_1\setminus V'_1$ (it is possible that $V''_1=\emptyset$).

\noindent\begin{claim}\label{cl:p=5 3} The following statements hold:
\begin{itemize}
  \item[{\rm (1)}] for any color $i\in C(V'_1)$, we have $i\notin C(V_2)$;
  \item[{\rm (2)}] $\left|V_2\right|\leq 2^{\left|C(V_2)\right|}$;
  \item[{\rm (3)}] $\left|A\right|\leq 2^{\left|C(A)\right|}$ and $\left|B\right|\leq 2^{\left|C(B)\right|}$.
\end{itemize}
\end{claim}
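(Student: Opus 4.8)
The three parts rest on two structural facts already in hand: every edge between $V_1$ and $V_2$ has color $1$, while $1\in C(V_1)\setminus C(V_2)$; and, by Claim~\ref{cl:p=5 2} together with the definition of $A,B$, each vertex of $V'_1$ is joined by a color-$1$ edge to a vertex in the opposite class of the bipartition $V'_1=A\cup B$. My plan is to treat (2) and (3) as essentially direct applications of the induction hypothesis of Theorem~\ref{th:p=5}, and to reserve the real work for (1).

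For (1) I would argue by contradiction: suppose some color $i$ lies in $C(V'_1)\cap C(V_2)$. Since $1\notin C(V_2)$ we have $i\ne 1$, so I can fix a color-$i$ edge $uv$ inside $V'_1$ and a color-$i$ edge $w_1w_2$ inside $V_2$. All four cross edges $uw_1,uw_2,vw_1,vw_2$ have color $1$, so $\{u,v,w_1,w_2\}$ is already a $\{1,i\}$-colored $K_4$, and the task is to add a fifth vertex without creating a third color. The natural candidate is the color-$1$ neighbor $b$ of $u$ in the opposite class of $V'_1$: then $c(bu)=1$ and the cross edges $bw_1,bw_2$ are color $1$, while applying Lemma~\ref{le:Gallai} to $b$ against the pair $\{u,v\}$ (whose only internal color is $i$) forces $c(bv)\in\{1,i\}$. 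Thus $\{u,v,w_1,w_2,b\}$ is a $2$-colored $K_5$, a contradiction. I expect this to be the main obstacle, and the one point needing care is that the five vertices are genuinely distinct; this is immediate once one notes $b\ne v$, because $c(ub)=1\ne i=c(uv)$.

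For (2), since $1\notin C(V_2)$ we have $|C(V_2)|\le k-1$, so when $|C(V_2)|\ge 2$ the induction hypothesis applied to the Gallai-coloring $G[V_2]$ (which contains no $2$-colored $K_5$) yields $|V_2|\le 2^{|C(V_2)|}$ at once; the degenerate cases $|C(V_2)|\le 1$ I would dispose of by hand, noting that a monochromatic triangle in $V_2$ together with a color-$1$ edge of $V_1$ (which exists as $1\in C(V_1)$) would already give a $2$-colored $K_5$, so $|V_2|\le 2$. Part (3) has exactly the same shape, using $1\notin C(A)$ and $1\notin C(B)$ to keep the color counts below $k$; the only new ingredient is the monochromatic case, where I would take a monochromatic triangle of $A$, the color-$1$ neighbor in $B$ of one of its vertices (Lemma~\ref{le:Gallai} then confines the remaining two edges from $b$ to these three vertices to two colors), and any vertex of $V_2$, assembling once more a $2$-colored $K_5$.
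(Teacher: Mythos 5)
Your proposal is correct and follows essentially the same route as the paper: for (1) you build the same $2$-colored $K_5$ (the color-$i$ edge in $V'_1$, a color-$1$ neighbor of one of its endpoints inside $V'_1$, and a color-$i$ edge in $V_2$, with rainbow-triangle avoidance/Lemma~\ref{le:Gallai} confining the remaining edge to $\{1,i\}$), and for (2) and (3) you invoke the induction hypothesis when at least two colors are present and dispose of the monochromatic cases with exactly the paper's monochromatic-triangle-plus-color-$1$-edge construction. The only differences are cosmetic (you assemble the $K_4$ in $V_1\cup V_2$ first and add the fifth vertex, the paper builds the $2$-colored triangle in $V_1$ first), so no further comment is needed.
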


\begin{proof} (1) If $i=1$, then it holds clearly. If $i\neq 1$, then we may assume that $c(uv)=i$ for some $u, v\in V'_1$. Since $u\in V'_1$, there exists a vertex $w\in V'_1\setminus \{u,v\}$ with $c(uw)=1$. In order to avoid a rainbow $K_3$, we have $c(vw)\in \{1, i\}$. If $i\in C(V_2)$, then there is a 2-colored $K_5$ using colors 1 and $i$ in $G$, a contradiction.

(2) Let $\left|C(V_2)\right|=j$ ($0\leq j\leq k-1$). If $j=0$, then $\left|V_2\right|=1=2^0$. If $j=1$, then $G[V_2]$ is a monochromatic complete subgraph. Suppose $\left|V_2\right|\geq 3$. Then $G$ contains a 2-colored $K_5$ since $1\in C(V_1)$, a contradiction. Thus $\left|V_2\right|=2=2^1$. If $2\leq j\leq k-1$, then $\left|V_2\right|\leq 2^{j}$ by the induction hypothesis.

(3) By symmetry, we only prove it for $A$. If $\left|C(A)\right|\neq 1$, then $\left|A\right|\leq 2^{\left|C(A)\right|}$ by the same argument as in (2). If $\left|C(A)\right|=1$, then $G[A]$ is a monochromatic complete subgraph. Suppose $\left|A\right|\geq 3$, say $u, v, w\in A$. Recall that $1\notin C(A)$. By the definition of $A$ and $B$, there exists a vertex $x\in B$ such that $c(ux)=1$. In order to avoid a rainbow $K_3$, we have $C(x, \{v, w\})\subseteq C(A)\cup\{1\}$. Note that for any vertex $y\in V_2$, we have $c(y, \{u, v, w, x\})=1$. Thus $\{u,v,w,x,y\}$ forms a 2-colored $K_5$, a contradiction.
\end{proof}

\noindent\begin{claim}\label{cl:p=5 4} $\left|A\right|\geq 2$ and $\left|B\right|\geq 2$.
\end{claim}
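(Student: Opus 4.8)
The plan is to argue by contradiction. Assuming (as we do) that $G$ has no $2$-colored $K_5$, I will suppose that one of the two parts is a single vertex, say $A=\{a\}$, and derive $|V(G)|<2^k+1$; the case $|B|=1$ is completely symmetric, with the roles of $A$ and $B$ interchanged. First I record the local structure forced by $|A|=1$. Since every color-$1$ edge inside $V_1$ joins $A$ to $B$, each $b\in B$ (being incident with such an edge) satisfies $c(a,b)=1$; and as $c(V_1,V_2)=1$ we also have $c(a,v)=1$ for every $v\in V_2$. Hence $a$ is joined in color $1$ to all of $B\cup V_2$, all $B$--$V_2$ edges have color $1$, and $1\notin C(B)\cup C(V_2)$. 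Moreover $C(B)\subseteq C(V'_1)$, so Claim~\ref{cl:p=5 3}(1) yields $C(B)\cap C(V_2)=\emptyset$.

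In the clean case $V''_1=\emptyset$ we then have $V_1=\{a\}\cup B$, and the conclusion is immediate: Claim~\ref{cl:p=5 3} gives $|B|\le 2^{|C(B)|}$ and $|V_2|\le 2^{|C(V_2)|}$, while the disjointness and color-$1$-freeness of $C(B)$ and $C(V_2)$ force $|C(B)|+|C(V_2)|\le k-1$. Since $2^x+2^y$ is maximized over $x+y\le k-1$ at an endpoint, $|B|+|V_2|\le 2^{k-1}+1$, and therefore $|V(G)|=1+|B|+|V_2|\le 2^{k-1}+2<2^k+1$, a contradiction. The whole point is that peeling off the single vertex $a$ saves an entire factor of $2$ over the naive bound $2^{k}$, which is exactly what makes $|A|=1$ impossible.

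The main obstacle is the general case $V''_1\ne\emptyset$, because the color-$1$ star at $a$ does not reach $V''_1$: each $w\in V''_1$ is joined to $a$ and to $B$ only by colors $\ne 1$. I would absorb these vertices by setting $B^{+}=B\cup V''_1$ and noting that $G[B^{+}\cup V_2]$ is a Gallai coloring in which all $B^{+}$--$V_2$ edges have color $1$ while neither side uses color $1$ internally, so $(B^{+},V_2)$ is a Gallai $2$-partition; Lemma~\ref{le:Gallai} applied to $a$ and $B^{+}$ shows that every color on an edge from $a$ to $V''_1$ already lies in $C(B^{+})$, so that $C(B^{+})\cup C(V_2)\cup\{1\}$ contains all colors of $G$. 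The crux is to upgrade $C(B)\cap C(V_2)=\emptyset$ to $C(B^{+})\cap C(V_2)=\emptyset$, i.e.\ to forbid a color $i\ne 1$ occurring both inside $B^{+}$ and inside $V_2$; such an $i$ produces, from an $i$-edge in $B^{+}$, an $i$-edge in $V_2$ and the all-color-$1$ cut, a $2$-colored $K_4$ on colors $\{1,i\}$, which I would try to extend to a $2$-colored $K_5$ using the induction hypothesis inside the larger of $B^{+}$, $V_2$ together with Lemma~\ref{le:Gallai}. Once this disjointness holds we again get $|C(B^{+})|+|C(V_2)|\le k-1$ and hence $|V(G)|=1+|B^{+}|+|V_2|\le 2^{k-1}+2<2^k+1$; I expect this final disjointness step (equivalently, forcing the forbidden shared color to create a $2$-colored $K_5$) to be the delicate part of the argument.
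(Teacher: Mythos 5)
Your reduction to the case $A=\{a\}$, the structural facts you extract ($c(a,B\cup V_2)=1$, $1\notin C(B^{+})\cup C(V_2)$, $C(B)\cap C(V_2)=\emptyset$, and $C(a,V''_1)\subseteq C(B^{+})$ via Lemma~\ref{le:Gallai}), and your treatment of the clean case $V''_1=\emptyset$ are all correct and essentially coincide with the paper's argument. The gap is exactly where you predicted it: the upgrade to $C(B^{+})\cap C(V_2)=\emptyset$ cannot be established by the local argument you sketch, because the implication ``a color $i\neq 1$ shared by $G[B^{+}]$ and $G[V_2]$ forces a $2$-colored $K_5$'' is false for Gallai colorings with your structure. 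Concretely, take $V_1=\{a,b,x,y\}$, $V_2=\{z,w\}$, with $c(ab)=1$, all $V_1$--$V_2$ edges colored $1$, $c(xy)=c(zw)=2$, and $c(ax)=c(ay)=c(bx)=c(by)=3$. This is a Gallai coloring with $A=\{a\}$, $B=\{b\}$, $V''_1=\{x,y\}$, it contains no $2$-colored $K_5$ (each of the six $5$-subsets uses all three colors), yet $2\in C(B^{+})\cap C(V_2)$. The shared-color $K_4$ on $\{x,y,z,w\}$ cannot be extended: a fifth vertex would need all four of its edges into this $K_4$ colored $1$ or $2$, and neither $a$ (whose edges to $V''_1$ avoid color $1$), nor $b$, nor any further vertex is forced to satisfy this. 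Since your proposed deduction (shared color $\Rightarrow$ $K_4$ $\Rightarrow$ extend via Lemma~\ref{le:Gallai}/induction) is purely local, it would apply to this example and is therefore invalid; any true proof of the disjointness would have to invoke the global count $|V(G)|=2^k+1$, which your sketch does not do. A secondary issue: your bound $|B^{+}|\leq 2^{|C(B^{+})|}$ is not covered by Claim~\ref{cl:p=5 3}(3) and needs its own induction-based justification (this one is patchable).

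The paper avoids this trap by never merging $V''_1$ into $B$. It fixes a single vertex $v\in V''_1$, sets $2:=c(av)$ (necessarily $\neq 1$), and uses rainbow-triangle-freeness to get $c(v,B)=2$; for this \emph{particular} color, an occurrence in $C(B)$ or in $C(V_2)$ does produce a $2$-colored $K_5$ (on $\{a,v,b_1,b_2,z\}$ or $\{a,v,b,z_1,z_2\}$, all of whose mutual edges are already known to be colored $1$ or $2$). Hence $|C(B)|+|C(V_2)|\leq k-2$, one extra color being ``lost,'' and the count closes as $|V(G)|\leq 1+2^{|C(B)|}+2^{|C(V_2)|}+|V''_1|\leq 1+(2^{k-2}+1)+2^{k-1}<2^k+1$, where $|V''_1|\leq 2^{k-1}$ follows from the induction hypothesis because $1\notin C(V''_1)$. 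Note that this is consistent with the counterexample above: there the excluded color is $3=c(ax)$, not the shared color $2$. The moral is that only the color joining $\{a\}\cup B$ to a fixed vertex of $V''_1$ can be excluded from $C(B)\cup C(V_2)$, not every color appearing inside $B\cup V''_1$.
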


\begin{proof} By symmetry, we only prove $\left|A\right|\geq 2$. If $\left|A\right|=1$, say $A=\{u\}$, then we have $c(u, B)=1$ by the definition of $B$. By Claim \ref{cl:p=5 3} (1), we have $C(B)\cap C(V_2)=\emptyset$, so $\left|C(B)\right|+\left|C(V_2)\right|\leq k-1$. If $V''_1=\emptyset$, then by Claim \ref{cl:p=5 3} (2) and (3) we have $\left|V(G)\right|= \left|A\right|+\left|B\right|+\left|V_2\right|\leq 1+2^{\left|C(B)\right|}+2^{\left|C(V_2)\right|}\leq 1+(2^{\left|C(B)\right|+\left|C(V_2)\right|}+1)\leq 1+2^{k-1}+1<2^k+1$, a contradiction. Thus $V''_1\neq \emptyset$, say $v\in V''_1$. Note that $v$ is not incident with any edge in color 1. Thus we may further assume that $c(uv)=2$. In order to avoid a rainbow $K_3$, we have $c(v, B)=2$. Then $2\notin C(B)$ and $2\notin C(V_2)$ in order to avoid a 2-colored $K_5$, so $\left|C(B)\right|+\left|C(V_2)\right|\leq k-2$. Then $\left|V(G)\right|= \left|A\right|+\left|B\right|+\left|V_2\right|+\left|V''_1\right|\leq 1+2^{\left|C(B)\right|}+2^{\left|C(V_2)\right|}+2^{k-1}\leq 1+(2^{\left|C(B)\right|+\left|C(V_2)\right|}+1)+2^{k-1}\leq 1+2^{k-2}+1+2^{k-1}<2^k+1$, a contradiction.
\end{proof}

\noindent\begin{claim}\label{cl:p=5 5} $C(B)\setminus C(A)\neq \emptyset$ and $C(A)\setminus C(B)\neq \emptyset$.
\end{claim}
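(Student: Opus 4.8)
The plan is to prove both statements at once by exploiting the symmetry between $A$ and $B$: these two parts are interchangeable, being the two sides of the bipartition induced by color $1$ on $V'_1$. Hence it suffices to rule out $C(B)\subseteq C(A)$, after which $C(A)\subseteq C(B)$ follows verbatim by swapping the roles of $A$ and $B$. Accordingly, I would assume for contradiction that $C(B)\subseteq C(A)$ and aim to produce a $2$-colored $K_5$, contradicting the standing assumption on $G$.

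The driving observation is that every vertex of $V_2$ is joined to all of $V_1$ by color $1$ (since $c(V_1,V_2)=1$), so adding any $w\in V_2$ to four vertices of $V_1$ that span a $K_4$ whose colors lie in $\{1,i\}$ yields a $K_5$ colored only with $\{1,i\}$. Thus the whole task reduces to locating, inside $G[V_1]$, a $K_4$ whose color set is contained in $\{1,i\}$ for a single color $i$. Note that a $2$-colored $K_4$ inside $V_1$ that avoids color $1$ would \emph{not} suffice here, since the vertex $w$ contributes color $1$; this is exactly why color $1$ must be one of the two colors of the $K_4$.

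To find such a $K_4$ I would use a common color together with the color-$1$ edges between $A$ and $B$. Since $|B|\geq 2$ by Claim~\ref{cl:p=5 4} and $1\notin C(B)$, there is an edge $b_1b_2$ in $B$ of some color $i\neq 1$, and $i\in C(B)\subseteq C(A)$. Because $b_1\in V'_1$ it has a color-$1$ neighbor in $A$; the goal is to select two vertices $a_1,a_2\in A$ that are both color-$1$-adjacent to $b_1$ and satisfy $c(a_1a_2)=i$. Once such $a_1,a_2$ are in hand, the triangles $a_1b_1b_2$ and $a_2b_1b_2$ (each already carrying colors $1$ and $i$) force $c(a_1b_2),c(a_2b_2)\in\{1,i\}$ by the absence of rainbow triangles (Lemma~\ref{le:Gallai}), while $c(a_1b_1)=c(a_2b_1)=1$ by choice; hence $\{a_1,a_2,b_1,b_2\}$ spans a $K_4$ with every edge in $\{1,i\}$, completing the contradiction.

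The hard part will be guaranteeing that a color-$i$ edge of $A$ can be placed on the color-$1$ neighborhood of $b_1$ (or, more generally, routed through color-$1$ edges so that every cross edge between the chosen $A$- and $B$-vertices stays in $\{1,i\}$). A naive choice of an arbitrary color-$i$ edge $a_1a_2$ in $A$ can leave the four cross edges between $\{a_1,a_2\}$ and $\{b_1,b_2\}$ carrying a spurious third color $z\notin\{1,i\}$, producing only a $3$-colored $K_4$ and no contradiction. I expect resolving this to require a short case analysis driven by the structure of the color-$1$ bipartite graph on $V'_1$ and the bounds of Claim~\ref{cl:p=5 3}, splitting according to whether $b_1$ and $b_2$ share color-$1$ neighbors in $A$ and invoking $|A|\geq 2$ to secure the second vertex $a_2$.
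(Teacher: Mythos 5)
Your reduction by symmetry, your identification of the target configuration (a $K_4$ inside $V_1$ colored only with $\{1,i\}$, completed to a $2$-colored $K_5$ by any vertex of $V_2$), and your final forcing step via rainbow-free triangles are all correct and match the paper's endgame. But the proposal has a genuine gap exactly where you flag it, and the case analysis you sketch cannot close it. Consider the case where $b_1$ and $b_2$ have distinct color-$1$ neighbors $a_1\neq a_2$ in $A$ with $c(a_1a_2)=z\notin\{1,i\}$: the rainbow-free condition then forces \emph{all four} cross edges between $\{a_1,a_2\}$ and $\{b_1,b_2\}$ to have color $1$, and the resulting $K_4$ is $3$-colored with $\{1,i,z\}$ --- perfectly consistent with all standing assumptions, so no contradiction arises. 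Nothing in Claim~\ref{cl:p=5 3} or in $|A|\geq 2$ rules this out; the obstruction is that for an \emph{arbitrary} color $i\in C(B)$ and an arbitrary color-$i$ edge of $B$, there is simply no reason a color-$i$ edge of $A$ sits on the right color-$1$ neighborhoods.

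The missing idea in the paper is the choice of the color $i$, not the placement of the edge: by Corollary~\ref{co:Gallai}, some color (say color $2$) induces a \emph{connected spanning} subgraph of $G[B]$, and the paper proves that this particular color lies outside $C(A)$, which is stronger than what you need. The argument then runs in the opposite direction from yours: suppose $c(uv)=2$ with $u,v\in A$; since $u\in V'_1$ it has a color-$1$ neighbor $w\in B$, and the spanning property guarantees a color-$2$ edge $wx$ with $x\in B$ \emph{incident to that very vertex} $w$ --- this is precisely the coupling between the color-$i$ structure of $B$ and the color-$1$ edges leaving $A$ that your approach lacks. Rainbow-freeness then gives $c(vw),c(ux)\in\{1,2\}$; if also $c(vx)\in\{1,2\}$ one gets the $2$-colored $K_5$ at once, and otherwise $c(vw)=c(ux)=2$ is forced, after which the color-$1$ neighbor $y\in B$ of $v$ yields the $2$-colored $K_5$ on $\{u,v,w,y\}$ plus a vertex of $V_2$. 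So your plan can be repaired, but only by importing this choice of color via Corollary~\ref{co:Gallai} and starting from the $A$-side edge rather than from a fixed edge $b_1b_2$ in $B$.
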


\begin{proof} By symmetry, we only prove $C(B)\setminus C(A)\neq \emptyset$. Recall that $1\notin C(A)$ and $1\notin C(B)$. By Claim \ref{cl:p=5 4}, we have $\left|C(B)\right|\geq 1$. Since $G[B]$ is a Gallai-coloring, there exists a color, say color 2, inducing a connected spanning subgraph of $G[B]$ by Corollary \ref{co:Gallai}. We will show that $2\notin C(A)$. For a contradiction, suppose that there are two vertices $u, v\in A$ with $c(uv)=2$. We may further assume that $c(uw)=1$ and $c(wx)=2$, where $w,x\in B$. Then $c(vw)\in \{1, 2\}$ and $c(ux)\in \{1, 2\}$. Thus $c(vx)\notin \{1, 2\}$, since otherwise $\{u, v, w, x\}$ together with a vertex in $V_2$ forms a 2-colored $K_5$. Then $c(vw)=c(ux)=2$ in order to avoid a rainbow $K_3$. Since $v$ is incident with some edge in color 1, we may assume that $c(vy)=1$ for some $y\in B\setminus \{w, x\}$. In order to avoid a rainbow $K_3$, we have $C(y, \{u,w\})\subseteq \{1, 2\}$. Then $\{u, v, w, y\}$ together with a vertex in $V_2$ forms a 2-colored $K_5$, a contradiction.
\end{proof}

By Claim \ref{cl:p=5 3} (1), we have $C(A)\cap C(V_2)=\emptyset$ and $C(B)\cap C(V_2)=\emptyset$. Recall that $1\notin C(A)$ and $1\notin C(B\cup V''_1)$. By Claim \ref{cl:p=5 5}, we further have $\left|C(A)\right|\leq k-\left|\{1\}\right|-\left|C(B)\setminus C(A)\right|-\left|C(V_2)\right|\leq k-2-\left|C(V_2)\right|$. Thus $\left|V(G)\right|= \left|A\right|+\left|V_2\right|+\left|B\cup V''_1\right|\leq  2^{k-2-\left|C(V_2)\right|}+2^{\left|C(V_2)\right|}+2^{k-1}\leq 2^{k-2}+1+2^{k-1}<2^k+1$, a contradiction. This completes the proof of Theorem \ref{th:p=5}.
\end{proof}

Next, we present our proof of Theorem~\ref{th:q=log}.

\begin{proof}[Proof of Theorem~\ref{th:q=log}] Let $q=\left\lfloor \log_2 (p-1)\right\rfloor$. By Theorem \ref{th:p=5}, we have $g^q_2(5)=2^q+1\leq p$. Thus every Gallai-$q$-colored $K_p$ contains a 2-colored $K_5$. Let $g=g^k_q(p)$. Then every Gallai-$k$-coloring of $K_{g}$ contains a Gallai-$q$-colored $K_p$, and thus a 2-colored $K_5$. Hence, $g^k_q(p)\geq g^k_2(5)=2^k+1$.
\end{proof}

In fact, we can generalize Theorem \ref{th:q=log} as follows.

\noindent\begin{theorem}\label{th:q=log+}
For integers $p, q, k$ with $q\leq \log_2 (p-1)$ and $k\geq q$, we have $g^k_q(p)\geq \left\lfloor (p-1)^{1/q}\right\rfloor^k+1$.
\end{theorem}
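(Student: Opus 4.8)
The plan is to prove this lower bound by exhibiting, for $b:=\lfloor (p-1)^{1/q}\rfloor$, an explicit Gallai-$k$-coloring of $K_{b^k}$ that contains no $q$-colored $K_p$; by the definition of $g^k_q(p)$ this immediately yields $g^k_q(p)\geq b^k+1$. First I would record two elementary consequences of the hypotheses: since $q\leq\log_2(p-1)$ we have $2^q\leq p-1$, hence $(p-1)^{1/q}\geq 2$ and $b\geq 2$, so the blow-up below is non-degenerate; and since $b\leq (p-1)^{1/q}$ we have $b^q\leq p-1$. The construction generalizes the iterated doubling in the proof of Theorem~\ref{th:p=5} (which is exactly the case $p=5$, $q=2$, $b=2$): instead of doubling, I blow up by a factor $b$ at each of the $k$ levels, spending one new color per level.

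Concretely, I would build a sequence $G_0,G_1,\dots,G_k$, where $G_i$ is a Gallai-$i$-coloring of $K_{b^i}$. Set $G_0=K_1$ (using no colors). Given $G_i$, form $G_{i+1}$ by taking $b$ vertex-disjoint copies of $G_i$ and coloring every edge between two distinct copies with a fresh color $i+1$. Each $G_{i+1}$ is a Gallai-coloring: a triangle meeting three distinct copies is monochromatic in color $i+1$, a triangle meeting exactly two copies has its two cross-edges in color $i+1$, and a triangle inside a single copy inherits the Gallai property from $G_i$, so no rainbow triangle arises. The essential step, which I would set up carefully, is to choose the right induction hypothesis, namely the \emph{scale-invariant} invariant
$$(\star)\qquad \text{every complete subgraph of $G_i$ using at most $j$ colors has order at most $b^{j}$, for all $j\geq 0$.}$$
This holds for $G_0$ trivially, and taking $j=q$ at level $i=k$ gives order at most $b^q\leq p-1<p$, so $G_k$ contains no $q$-colored $K_p$.

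To propagate $(\star)$ from $G_i$ to $G_{i+1}$, I would take any complete subgraph $K$ of $G_{i+1}$ using at most $j$ colors and let $r$ be the number of copies of $G_i$ it meets, with $a_1,\dots,a_r\geq 1$ the numbers of its vertices in those copies. If $r=1$, then $K$ lies inside one copy and $|K|\leq b^{j}$ follows at once from the hypothesis for $G_i$. If $r\geq 2$, the cross-edges force color $i+1$ into $C(K)$, so the trace of $K$ on each copy uses at most $j-1$ colors and hence has at most $b^{j-1}$ vertices by the hypothesis for $G_i$; since $r\leq b$, we obtain $|K|=\sum_{s=1}^{r}a_s\leq r\,b^{j-1}\leq b^{j}$, as required. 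This closes the induction, and the conclusion $g^k_q(p)\geq b^k+1=\lfloor(p-1)^{1/q}\rfloor^k+1$ follows.

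The main obstacle is not any single computation but choosing the invariant correctly. The naive target ``every $q$-colored complete subgraph has order at most $b^q$'' does not survive the blow-up step, because placing color $i+1$ on all cross-edges costs exactly one unit from the color budget available inside each copy; the propagation therefore requires the full family $(\star)$ indexed by all $j$ (in particular the $j-1$ instance) rather than the single value $j=q$. Once $(\star)$ is in place the verification is routine. As a consistency check, for $b=2$ the recursion reproduces precisely the matching-plus-$C_4$ base graph on $K_4$ and the doubling construction of Theorem~\ref{th:p=5}.
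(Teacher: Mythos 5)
Your proposal is correct and is essentially the paper's own proof: both build the same iterated blow-up $G_k$ of $K_{\lfloor (p-1)^{1/q}\rfloor^k}$, joining $b$ copies of the previous level with one fresh color per level, and conclude that any $q$-colored complete subgraph has order at most $b^q\leq p-1$. The only difference is that the paper asserts this last bound directly, while you supply the explicit induction via the invariant $(\star)$, which is a careful write-up of the same fact.
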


\begin{proof} Let $m=\left\lfloor (p-1)^{1/q}\right\rfloor$.
We show that there is a Gallai-$k$-coloring of $K_{m^k}$, in which there is no $K_p$ receiving at most $q$ distinct colors. Let $G_1$ be a monochromatic copy of $K_{m}$ with color 1. Suppose for some $1\leq i\leq k-1$ we have constructed a Gallai-$i$-coloring $G_i$ of $K_{m^i}$. Then we construct an $(i+1)$-edge-coloring $G_{i+1}$ of $K_{m^{i+1}}$ by joining $m$ copies of $G_{i}$ using edges that all get color $i+1$. Finally, we obtain a Gallai-$k$-coloring $G_k$ of $K_{m^k}$. For any $q$ distinct colors $c_1, c_2, \ldots, c_q$, the largest complete subgraph in $G_k$ using only these $q$ colors has order at most $m^q\leq p-1$. Thus $g^k_{q}(p)\geq m^k+1$.
\end{proof}

By Theorems~\ref{th:q} and \ref{th:q=log+}, we have $\lfloor(p-1)^{1/q}\rfloor^k+1 \leq g^k_q(p) \leq 2^{\frac{2k(p-2)}{q}+1}$ for $2\leq q\leq \log_2(p-1)$. In the case $q=1$, we have $2^{\left(\frac{1}{4}+o(1)\right)kp}\leq g^k_1(p)\leq 2^{(2-o(1))kp}$, where the lower bound follows from a construction given in \cite{FoGP} and the upper bound follows from Proposition~\ref{prop:Kp}. In this case, when $p$ is large, the gap between the lower and upper bounds is much smaller than the gap between the abovementioned lower and upper bounds. In the case $p=5$ and $q=2$, our Theorem~\ref{th:p=5} shows that $g^k_2(5)=2^k+1$ for $k\geq 2$, which closes the gap in this case.

\section{Proof of Theorem \ref{th:q=k-1}}
\label{sec:ch-proof q=k-1}

We first introduce two additional definitions and prove some useful lemmas.
An {\it exact Gallai-$k$-coloring} is a Gallai-$k$-coloring in which all the $k$ colors are used. A {\it star} of order $t+1\ge 2$  is a connected graph with a vertex of degree $t$ having $t$ neighbors of degree 1, and is usually denoted as  $K_{1,t}$.

\noindent\begin{lemma}\label{le:Gallai-n-1}
For any $n\geq 2$, there are at least $\left\lceil\frac{n}{2}\right\rceil$ colors each inducing a star in every exact Gallai-$(n-1)$-coloring of $K_{n}$.
\end{lemma}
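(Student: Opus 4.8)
The plan is to induct on $n$, the base case $n=2$ being immediate: the unique color on $K_2$ is a single edge, hence a star, and $\lceil 2/2\rceil=1$. For the inductive step I would first invoke Theorem~\ref{th:Gallai} to fix a Gallai-partition $V_1,\dots,V_m$ (with $m\ge 2$) of the given exact Gallai-$(n-1)$-coloring. The crucial preliminary observation is that exactness pins down $m$: writing $n_i=|V_i|$ and using that at most two colors appear between the parts together with $|C(V_i)|\le n_i-1$ from Theorem~\ref{th:antiR}, the total number of colors is at most $2+\sum_i(n_i-1)=n-m+2$, so forcing this to equal $n-1$ gives $m\le 3$. Since a three-part Gallai-partition can always be coarsened into a two-part one (merge the two parts joined to the third by a common color), I may assume $m=2$ with a single color, say color $1$, on all edges between $V_1$ and $V_2$.

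With $m=2$ the same count becomes an equality, $n-1=|\{1\}\cup C(V_1)\cup C(V_2)|\le 1+(n_1-1)+(n_2-1)=n-1$, and I would extract the rigidity it forces: the sets $\{1\}$, $C(V_1)$, $C(V_2)$ are pairwise disjoint and $|C(V_i)|=n_i-1$, so each $G[V_i]$ is itself an exact Gallai-$(n_i-1)$-coloring to which the induction hypothesis applies. The payoff of disjointness is that every color of $C(V_i)$ occurs only inside $V_i$ (it is neither color $1$ nor a color of the other part), so its color class in $K_n$ coincides with its color class in $G[V_i]$; in particular a color inducing a star inside $V_i$ still induces a star in $K_n$. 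Hence the star-colors produced by the induction in $V_1$ and in $V_2$ are distinct and survive, giving at least $\lceil n_1/2\rceil+\lceil n_2/2\rceil$ star-colors, which is at least $\lceil n/2\rceil$ by superadditivity of the ceiling.

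The one place this bookkeeping breaks, and what I expect to be the main obstacle, is a part of size one: if $n_1=1$ then the induction hypothesis on $V_1$ is vacuous and yields $0<\lceil 1/2\rceil$ star-colors, so the naive count falls one short. The remedy is that in exactly this case color $1$ comes to the rescue: the single vertex of $V_1$ sends only color-$1$ edges to $V_2$, and since color $1$ appears nowhere inside $V_2$, its color class is precisely the star $K_{1,n-1}$. Counting this star together with the $\lceil (n-1)/2\rceil$ star-colors supplied by the induction on $V_2$ gives $\lceil (n-1)/2\rceil+1\ge\lceil n/2\rceil$, closing the gap; note that when both parts have size at least two, color $1$ induces the complete bipartite graph $K_{n_1,n_2}$, which is not a star, but it is then not needed. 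I would finish by verifying the two ceiling inequalities $\lceil n_1/2\rceil+\lceil n_2/2\rceil\ge\lceil n/2\rceil$ and $\lceil (n-1)/2\rceil+1\ge\lceil n/2\rceil$, both of which are routine.
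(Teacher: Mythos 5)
Your proof is correct and follows essentially the same route as the paper's: induction on $n$, a Gallai-partition reduced to two parts (the paper takes $m$ minimal, you coarsen a three-part partition explicitly—same idea), the color count forcing $\{1\}$, $C(V_1)$, $C(V_2)$ to be disjoint with $|C(V_i)|=|V_i|-1$, and the split into the cases $|V_2|\geq 2$ (two applications of the induction hypothesis) and $|V_2|=1$ (where color $1$ itself supplies the extra star). Your write-up is in fact slightly more explicit than the paper's on why the sub-colorings are exact and why stars inside a part remain stars in $K_n$, but the substance is identical.
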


\begin{proof} We prove the statement by induction on $n$. For the base case, if $2\leq n\leq 3$, the statement clearly holds. Now assume that it holds for every $2\leq n'\leq n-1$, and we will prove it for $n$. Let $G$ be an exact Gallai-$(n-1)$-coloring of $K_{n}$. Let $V_1, V_2, \ldots, V_m$ be a Gallai-partition of $V(G)$ such that $m$ is minimum. If $m\geq 4$, then by Theorem \ref{th:antiR} we have $|C(G)|\leq 2+\sum^m_{i=1}|C(V_i)|\leq 2+\sum^m_{i=1}(|V_i|-1)\leq 2+n-m\leq n-2$, a contradiction. Thus $2\leq m\leq 3$, so $m=2$ by the minimality of $m$. Without loss of generality, we may assume that $c(V_1, V_2)=1$ and $|V_1|\geq |V_2|$. We claim that $1\notin C(V_1)\cup C(V_2)$ and $C(V_1)\cap C(V_2)= \emptyset$, since otherwise $|C(G)|\leq 1+|C(V_1)|+|C(V_2)|-1\leq |V_1|-1+|V_2|-1\leq n-2$. If $|V_2|=1$, then $G[V_1]$ is an exact Gallai-$(n-2)$-coloring of $K_{n-1}$. By the induction hypothesis, the number of colors each inducing a star is at least $1+\lceil (n-1)/2\rceil\geq\lceil n/2 \rceil$. If $|V_1|\geq |V_2|\geq 2$, then by the induction hypothesis, the number of colors each inducing a star is at least $\lceil |V_1|/2\rceil+\lceil |V_2|/2\rceil\geq \lceil n/2\rceil$.
\end{proof}

\noindent\begin{lemma}\label{le:N} For integers $c, n, N$ with $c\geq 2$, $n\geq 2(7+c)^c$ and $N\geq n-\frac{3n}{(7+c)^{c}}$, we have $\frac{N}{(2+c)(6+c)^{c-1}}-2\geq \frac{n}{(7+c)^c}$.
\end{lemma}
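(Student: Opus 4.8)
The statement is a purely arithmetic inequality in the three integer parameters, so the plan is to eliminate $n$ and $N$ and reduce everything to a single clean inequality in $c$. Write $t=(7+c)^c$ for brevity. The two hypotheses say exactly that $n/t\ge 2$ and $N\ge n(1-3/t)$ (the latter since $N\ge n-\frac{3n}{(7+c)^c}$), and note that $t\ge 81>3$, so $N>0$ and every division performed below is legitimate.

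First I would rewrite the goal as $\frac{N}{(2+c)(6+c)^{c-1}}\ge \frac{n}{t}+2$. The key observation is that the hypothesis $n/t\ge 2$ lets me trade the additive constant $2$ for $n/t$: indeed $\frac{n}{t}+2\le \frac{n}{t}+\frac{n}{t}=\frac{2n}{t}$, so it suffices to prove $\frac{N}{(2+c)(6+c)^{c-1}}\ge \frac{2n}{t}$. Now substituting the lower bound $N\ge n(1-3/t)$, dividing through by $n$ and multiplying by $t$, the inequality collapses to the parameter-free statement
$$(7+c)^c-3\ge 2(2+c)(6+c)^{c-1}. \qquad (\star)$$
Everything up to this point is bookkeeping; the content is in $(\star)$.

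To establish $(\star)$ I would divide by $(6+c)^{c-1}$ and discard the small correction term: since $c\ge 2$ gives $(6+c)^{c-1}\ge 8$, we have $3/(6+c)^{c-1}\le 3/8<1$, so it is enough to show
$$(7+c)\Bigl(\tfrac{7+c}{6+c}\Bigr)^{c-1}\ge 2c+5. \qquad (\star\star)$$
For this I would invoke Bernoulli's inequality on $(1+\tfrac{1}{6+c})^{c-1}$ (valid since $c-1\ge 1$), which gives $(1+\tfrac{1}{6+c})^{c-1}\ge 1+\tfrac{c-1}{6+c}=\tfrac{2c+5}{c+6}$. Multiplying by $7+c$ yields $\frac{(c+7)(2c+5)}{c+6}$, which is at least $2c+5$ simply because $\frac{c+7}{c+6}\ge 1$. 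This proves $(\star\star)$, hence $(\star)$, hence the lemma.

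I do not anticipate a genuine obstacle, as the statement is elementary; the only two points requiring care are ordering the reductions so that the final inequality is clean — in particular, using $n/t\ge 2$ to absorb the ``$+2$'' \emph{before} substituting the bound on $N$ — and checking that the crude Bernoulli estimate is still strong enough at $(\star\star)$. It is, and comfortably so, because the leading factor $7+c$ dominates $6+c$. Were Bernoulli too lossy, the fallback would be to treat a few small values of $c$ by hand and handle large $c$ via the sharper limit $(1+\tfrac{1}{6+c})^{c-1}\to e>2$, but this refinement is not needed here.
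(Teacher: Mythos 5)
Your proof is correct. In fact, your reduction and the paper's land on the very same parameter-free inequality: your $(\star)$, namely $(7+c)^c-3\geq 2(2+c)(6+c)^{c-1}$, is exactly half of the paper's final line $2(7+c)^c-4(2+c)(6+c)^{c-1}-6\geq 0$. Where the two arguments genuinely diverge is in how this core inequality is established. The paper substitutes $a=6+c$, expands $(a+1)^{a-6}$ by the binomial theorem, and cancels terms against $2(a-4)a^{a-7}$ by hand, which costs half a page of algebra; you instead divide by $(6+c)^{c-1}$ and apply Bernoulli's inequality to $\bigl(1+\tfrac{1}{6+c}\bigr)^{c-1}$, which is shorter and makes the slack visible (the surplus factor $\tfrac{c+7}{c+6}\geq 1$). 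Conceptually these are the same idea — Bernoulli is the two-term truncation of the paper's full binomial expansion — but your version is cleaner. A second, minor advantage of your arrangement: by absorbing the additive $2$ into $\tfrac{n}{t}$ via $n\geq 2(7+c)^c$ \emph{before} substituting the bound on $N$, the variable $n$ only ever appears as a positive common factor, so every substitution preserves the inequality automatically. The paper's proof instead substitutes $n\geq 2(7+c)^c$ into the expression $\bigl(1-\tfrac{3}{(7+c)^c}-\tfrac{(2+c)(6+c)^{c-1}}{(7+c)^c}\bigr)n-2(2+c)(6+c)^{c-1}$, which tacitly requires the coefficient of $n$ to be nonnegative; that is true, but it is left unverified there, whereas your ordering sidesteps the issue entirely.
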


\begin{proof} Let $a=6+c$. Then $a\geq 8$. Since
\begin{align*}
 ~ &~(2+c)(6+c)^{c-1}\left(\frac{N}{(2+c)(6+c)^{c-1}}-2- \frac{n}{(7+c)^c}\right)\\
 \geq &~\left(n-\frac{3n}{(7+c)^{c}}\right)-2(2+c)(6+c)^{c-1}- \frac{(2+c)(6+c)^{c-1}n}{(7+c)^c}\\
 = &~\left(1-\frac{3}{(7+c)^{c}}-\frac{(2+c)(6+c)^{c-1}}{(7+c)^c}\right)n-2(2+c)(6+c)^{c-1}\\
 \geq &~\left(1-\frac{3}{(7+c)^{c}}-\frac{(2+c)(6+c)^{c-1}}{(7+c)^c}\right)2(7+c)^c-2(2+c)(6+c)^{c-1}\\
 = &~2(7+c)^c-4(2+c)(6+c)^{c-1}-6\\
 = &~2((a+1)^{a-6}-2(a-4)a^{a-7})-6\\
 = &~2\left(\sum^{a-6}_{i=0}\binom{a-6}{i}a^i-2a^{a-6}+8a^{a-7}\right)-6\\
 = &~2\left(\sum^{a-8}_{i=0}\binom{a-6}{i}a^i+(a-6)a^{a-7}+a^{a-6}-2a^{a-6}+8a^{a-7}\right)-6\\
 = &~2\sum^{a-8}_{i=0}\binom{a-6}{i}a^i+4a^{a-7}-6~\geq~0,
\end{align*}
we have $\frac{N}{(2+c)(6+c)^{c-1}}-2\geq \frac{n}{(7+c)^c}$.
\end{proof}

\noindent\begin{lemma}\label{le:Gallai-n-c}
For any $c\geq 1$ and $n\geq 2(7+c)^c$, there are at least $\frac{n}{(7+c)^c}$ colors each inducing a star in every exact Gallai-$(n-c)$-coloring of $K_{n}$.
\end{lemma}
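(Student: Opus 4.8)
The plan is to prove the statement by a double induction, with an outer induction on $c$ and an inner induction on $n$. The base case $c=1$ is precisely Lemma~\ref{le:Gallai-n-1}, since $\lceil n/2\rceil\ge n/8=n/(7+1)^1$, so that the (stronger) conclusion there already yields the required number of star-colors. So I would fix $c\ge 2$, assume the statement for all smaller values of $c$ and, for the same $c$, for all smaller values of $n$, and let $G$ be an exact Gallai-$(n-c)$-coloring of $K_n$ with $n\ge 2(7+c)^c$. The first step is to apply Theorem~\ref{th:Gallai} to obtain a Gallai-partition $V_1,\dots,V_m$ with $m$ minimum, and to set $c_i=|V_i|-|C(V_i)|$, so that $c_i\ge 1$ by Theorem~\ref{th:antiR}. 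Counting colors with the at-most-two colors that occur between parts gives $n-c=|C(G)|\le 2+\sum_i|C(V_i)|=2+n-\sum_i c_i$, whence $\sum_i c_i\le c+2$; this forces $m\le c+2$ and, more importantly, bounds the total ``color deficiency'' by $\sum_i(c_i-1)\le c$. Thus only boundedly much deficiency is distributed over the parts, and $G[V_i]$ is itself an exact Gallai-$(|V_i|-c_i)$-coloring to which an induction hypothesis can be applied once $|V_i|$ exceeds the relevant threshold.

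The heart of the argument is to recurse into a large part of low deficiency and transfer its star-colors to $G$. A color that induces a star inside some $V_i$ still induces a star in $G$ as long as it is neither one of the at most two between-part colors nor a color reused inside another part; this is where near-maximality of the color count is exploited, exactly as Lemma~\ref{le:Gallai-n-1} uses $C(V_1)\cap C(V_2)=\emptyset$ in the maximal case. In the main case I would discard the parts that are too small to feed the induction (arranging that their total size is at most $\tfrac{3n}{(7+c)^c}$) so that the surviving parts carry deficiency at most $c-1$ and comprise $N\ge n-\tfrac{3n}{(7+c)^c}$ vertices. The largest surviving part then has at least $N/(2+c)$ vertices, and applying the outer induction hypothesis with its parameter $c_i\le c-1$, together with the bound $(7+c_i)^{c_i}\le (6+c)^{c-1}$, produces at least $\tfrac{N}{(2+c)(6+c)^{c-1}}$ star-colors inside that part. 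Discarding the at most two colors appearing between parts and invoking Lemma~\ref{le:N} closes this case, since $\tfrac{N}{(2+c)(6+c)^{c-1}}-2\ge \tfrac{n}{(7+c)^c}$. The role of Lemma~\ref{le:N} is thus exactly to absorb both the loss of the between-part colors and the vertices sacrificed in the reduction.

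The remaining configurations are those in which a large part already carries deficiency $c$, so that the outer induction cannot lower $c$. Here the deficiency budget $\sum_i(c_i-1)\le c$ sharply limits how many such parts can occur (essentially one for $c\ge 3$, and at most two when $c=2$), and I would treat each large deficiency-$c$ part by the inner induction on $n$: each is an exact Gallai-$(|V_i|-c)$-coloring of $K_{|V_i|}$ with $|V_i|<n$, so it contributes at least $|V_i|/(7+c)^c$ star-colors, and summing these disjoint contributions over the parts recovers $n/(7+c)^c$. Corollary~\ref{co:Gallai} and Lemma~\ref{le:Gallai} would be used along the way to handle degenerate parts (for instance singleton parts, whose incident between-color is automatically a genuine star in $G$) and to certify that appended vertices add at most one new color at a time.

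I expect the main obstacle to be the bookkeeping that guarantees a bounded color-loss when passing from a part to $G$: controlling the two between-part colors is immediate from Theorem~\ref{th:Gallai}, but one must rule out that a color inducing a star inside the recursed-into part is silently reused inside another part, which would turn its global color class into a union of stars rather than a single star. Making this precise — showing that the colors of the chosen part are essentially disjoint from the colors elsewhere, so that the loss stays at the constant $2$ permitted by Lemma~\ref{le:N} rather than growing with $c$ — together with organizing the reduction so that the discarded vertices really number at most $\tfrac{3n}{(7+c)^c}$, are the delicate points. Verifying the size thresholds $|V_i|\ge 2(7+c_i)^{c_i}$ needed to legitimately invoke the induction hypotheses, and dealing with the boundary value $n=2(7+c)^c$ of the inner induction, are the routine but necessary remaining checks.
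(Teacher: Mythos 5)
Your skeleton (outer induction on $c$ with Lemma~\ref{le:Gallai-n-1} as base case, the deficiency count $\sum_i c_i\le c+2$ for a Gallai-partition, recursion into a large part of deficiency at most $c-1$, and Lemma~\ref{le:N} to absorb a loss of $2$) does match the paper's, but the step you yourself flag as delicate --- that the colors of the recursed-into part are ``essentially disjoint from the colors elsewhere, so that the loss stays at the constant $2$'' --- is not merely delicate: it is false as stated, and the paper's way around it is precisely the idea your proposal lacks. For $c\ge 2$, near-maximality of the color count does \emph{not} force disjoint palettes; it only gives the joint bound $\sum_i c_i + D\le c+|S|\le c+2$, where $D$ is the number of cross-part color repetitions and $S$ the set of between-part colors, so in your ``main case'' (all $c_i\le c-1$) as many as $c-1$ colors of the chosen part can recur inside other parts, and every such color must be discarded in addition to the colors of $S$. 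This matters because there is almost no slack: at the threshold $n=2(7+c)^c$ the yield of your main case is $\frac{N}{(2+c)(6+c)^{c-1}}\le \frac{2(7+c)}{2+c}\left(\frac{7+c}{6+c}\right)^{c-1}<6$, an absolute constant, while the target is $2$; Lemma~\ref{le:N} absorbs a loss of exactly $2$ and nothing more. One can in fact salvage the count by jointly bookkeeping $(c_1,m,D)$ --- large $D$ forces small deficiency $c_1$, hence an exponentially larger yield --- but that requires strengthened variants of Lemma~\ref{le:N} and a case analysis that is nowhere in your sketch. The paper instead eliminates repetitions altogether by a recoloring device (Claim~\ref{cl:Gallai-n-c-1}): arguing by contradiction (fewer than $n/(7+c)^c$ star colors), if some color class has two nontrivial components --- in particular if a color occurs inside two different parts --- one component is recolored with a brand-new color, turning the coloring into an exact Gallai-$(N-(c-1))$-coloring; the induction hypothesis for $c-1$ then yields $\frac{N}{(6+c)^{c-1}}$ star colors, all but the two altered ones being stars of the original coloring, which contradicts the standing assumption. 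Hence every color class may be assumed connected, and disjointness of the parts' palettes comes for free. This deficiency-shifting recoloring is the crux of the proof and cannot be replaced by the disjointness claim you hoped to make.

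The second genuine gap is the inner induction on $n$ that you use for a large part of deficiency exactly $c$; it does not get off the ground at the boundary. Concretely, if $n=2(7+c)^c$, $m=2$, $V_2$ is a singleton and $V_1=V(G)\setminus V_2$ has deficiency $c$, then $|V_1|=n-1<2(7+c)^c$, so the inner hypothesis gives nothing, and your count produces only one star (the fresh cross color) where $\frac{n}{(7+c)^c}=2$ are required; relatedly, your remark that a singleton part's incident between-color is ``automatically a genuine star'' is false once $m\ge 3$, since a between-color may also cover all edges between two other parts. The paper replaces the inner induction by an iterative peeling algorithm whose validity is what the structural claim really buys: at every stage the residual graph is forced to split into exactly two parts, one of deficiency $c$ and one of deficiency $1$, with disjoint palettes and a fresh cross color; the deficiency-$1$ part is harvested via Lemma~\ref{le:Gallai-n-1} (or via the cross color when it is a singleton, a star exactly because $m=2$), and Lemma~\ref{le:N} certifies that the residual part --- which only needs more than $n-\frac{3n}{(7+c)^c}$ vertices, not $2(7+c)^c$ of them --- admits the claim again. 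This termination and threshold analysis is what your ``routine remaining checks'' would actually have to supply, and it is where a substantial part of the work lies.
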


\begin{proof} We prove the statement by induction on $c$. For the base case, if $c=1$, the statement holds by Lemma \ref{le:Gallai-n-1}. Now assume that it holds for every $1\leq c'\leq c-1$, and we will prove it for $c$ with $c\geq 2$. Let $G$ be an exact Gallai-$(n-c)$-coloring of $K_{n}$ using colors $1, 2, \ldots, n-c$. For a contradiction, suppose that the number of colors each inducing a star in $G$ is less than $\frac{n}{(7+c)^c}$.

\noindent\begin{claim}\label{cl:Gallai-n-c-1} Let $N$ be an integer satisfying $N\leq n$ and $\frac{N}{(2+c)(6+c)^{c-1}}-2\geq \frac{n}{(7+c)^c}$. For any $V'\subseteq V(G)$ with $|V'|=N$, $C(V')\cap C(V', V(G)\setminus V')=\emptyset$ and $C(V')\cap C(V(G)\setminus V')=\emptyset$, let $G'=G[V']$. If $G'$ is an exact Gallai-$(N-c)$-coloring of $K_{N}$, then $V(G')$ has a Gallai-partition consisting of exactly two parts $V'_1$ and $V'_2$, such that $|C(V'_1)|=|V'_1|-c$, $|C(V'_2)|=|V'_2|-1$, $c(V'_1, V'_2)\notin C(V'_1)\cup C(V'_2)$ and $C(V'_1)\cap C(V'_2)=\emptyset$.
\end{claim}

\begin{proof} Note that the integer $N$ and subset $V'$ satisfying the above conditions exist since we can choose $N=n$, $V'=V(G)$ and $G'=G$. Without loss of generality, let $C(G')=[N-c]$. First, we assume that there exists some color $\ell\in [N-c]$ such that the subgraph of $G'$ induced by color $\ell$ has at least two nontrivial components. Then we recolor all the edges of color $\ell$ in one of its nontrivial components with color $N-c+1$. Let $G''$ be the resulting coloring of $K_N$. It is easy to check that $G''$ is an exact Gallai-$(N-(c-1))$-coloring of $K_{N}$. Since $\frac{N}{(2+c)(6+c)^{c-1}}-2\geq \frac{n}{(7+c)^c}$, we have $N\geq 2(7+(c-1))^{c-1}$. By the induction hypothesis, there are at least $\frac{N}{(7+(c-1))^{c-1}}$ colors each inducing a star in $G''$. Recall that $C(V')\cap C(V', V(G)\setminus V')=\emptyset$ and $C(V')\cap C(V(G)\setminus V')=\emptyset$. There are at least $\frac{N}{(7+(c-1))^{c-1}}-2\geq \frac{n}{(7+c)^c}$ colors each inducing a star in $G$, a contradiction.

Next, we may assume that every color induces a subgraph with exactly one nontrivial component in $G'$. Let $V'_1, V'_2, \ldots, V'_m$ be a Gallai-partition of $V(G')$ such that $m$ is minimum and $|V'_1|=\max_{1\leq i\leq m}\{|V'_i|\}$, and let $S$ be the set of colors used between these parts. Then $1\leq |S|\leq 2$ and $(C(V'_i)\cap C(V'_j))\setminus S=\emptyset$ for every $1\leq i<j\leq m$.

If $m\geq 4$, then $N-c=|C(G')|\leq |S|+\sum^m_{i=1}|C(V'_i)|\leq 2+\sum^m_{i=1}(|V'_i|-1)\leq N+2-m$, so $m\leq 2+c$. Thus $|V'_1|\geq N/(2+c)\geq 2(6+c)^{c-1}= 2(7+(c-1))^{c-1}$. Moreover, $|C(V'_1)|\geq N-c-|S|-\sum^m_{i=2}|C(V'_i)|\geq N-c-2-\sum^m_{i=2}(|V'_i|-1)=|V'_1|-c+m-3\geq |V'_1|-c+1$. Let $C(V'_1)=\{c_i\colon\, i=1, 2, \ldots, |C(V'_1)|\}$. For all $|V'_1|-c+2\leq j\leq |C(V'_1)|$ (if $|C(V'_1)|>|V'_1|-c+1$), we recolor all the edges of color $c_j$ with color $c_1$ in $G'[V'_1]$, so we obtain an exact Gallai-$(|V'_1|-(c-1))$-coloring of $K_{|V'_1|}$. By the induction hypothesis, there are at least $\frac{|V'_1|}{(7+(c-1))^{c-1}}$ colors each inducing a star in $G'[V'_1]$. Thus the number of colors each inducing a star in $G$ is at least $\frac{|V'_1|}{(7+(c-1))^{c-1}}-|S|\geq \frac{N}{(2+c)(7+(c-1))^{c-1}}-2\geq \frac{n}{(7+c)^c}$, a contradiction.

Thus $2\leq m\leq 3$, so $m=2$ by the minimality of $m$. Then $|V'_1|\geq N/2\geq 2(7+(c-1))^{c-1}$. Note that $|C(V'_1)|\geq N-c-|C(V'_2)|-|S|\geq N-c-|V'_2|+1-1=|V'_1|-c$. If $|C(V'_1)|\geq |V'_1|-c+1$, then we can derive a contradiction by a similar argument as above. Thus we have $|C(V'_1)|=|V'_1|-c$, so $|C(V'_2)|=|V'_2|-1$, $S\cap (C(V'_1)\cup C(V'_2))=\emptyset$ and $C(V'_1)\cap C(V'_2)=\emptyset$.
\end{proof}

We will use an algorithm to find $\lceil\frac{n}{(7+c)^c}\rceil$ colors each inducing a star in $G$. Let $V^{(0)}_1:= V(G)$, $V^{(0)}_2:=\emptyset$, $G^{(0)}:=G$, $t:=1$, $A:=\emptyset$ and $B:=\emptyset$. The algorithm at time $i\geq 1$ consists of two steps.

{\it Step 1.} By applying Claim \ref{cl:Gallai-n-c-1} to $N=|V^{(i-1)}_1|$, $V'=V^{(i-1)}_1$ and $G'=G[V^{(i-1)}_1]$, we obtain a Gallai-partition $V^{(i)}_1$, $V^{(i)}_2$ of $V(G')$ such that $|C(V^{(i)}_1)|=|V^{(i)}_1|-c$, $|C(V^{(i)}_2)|=|V^{(i)}_2|-1$, $c(V^{(i)}_1, V^{(i)}_2)\notin C(V^{(i)}_1)\cup C(V^{(i)}_2)$ and $C(V^{(i)}_1)\cap C(V^{(i)}_2)=\emptyset$.

{\it Step 2.} If $|V^{(i)}_2|=1$, then let $c_t=c(V^{(i)}_1, V^{(i)}_2)$, $t:=t+1$ and $A:=A\cup V^{(i)}_2$; otherwise if $|V^{(i)}_2|\geq 2$, then let $B:=B\cup V^{(i)}_2$.

We repeat the above steps until $t\geq \frac{n}{(7+c)^c} +1$. Finally, we obtain $t-1$ distinct colors $c_1, c_2, \ldots, c_{t-1}$ each inducing a star in $G$. It remains to show that the above algorithm is valid. Since for any $j\leq i-1$ with $V^{(j)}_2\subseteq B$ we have $|V^{(j)}_2|\geq 2$ and $|C(V^{(j)}_2)|=|V^{(j)}_2|-1$, the number of colors each inducing a star in $G[V^{(j)}_2]$ is at least $\lceil |V^{(j)}_2|/2\rceil$ by Lemma \ref{le:Gallai-n-1}. Recall that $c(V^{(j)}_1, V^{(j)}_2)\notin C(V^{(j)}_1)\cup C(V^{(j)}_2)$ and $C(V^{(j)}_1)\cap C(V^{(j)}_2)=\emptyset$ for every $j\leq i-1$. Thus $|B|<\frac{2n}{(7+c)^c}$; otherwise the number of colors each inducing a star in $G$ is at least $\frac{n}{(7+c)^c}$, a contradiction. Thus $|V^{(i-1)}_1|= n-|B|-|A|> n-\frac{3n}{(7+c)^c}$. By Lemma \ref{le:N}, $|V^{(i-1)}_1|$ satisfies the condition of $N$ in Claim \ref{cl:Gallai-n-c-1}. Moreover, $V^{(i-1)}_1$ (resp., $G^{(i-1)}$) satisfies the condition of $V'$ (resp., $G'$) in Claim \ref{cl:Gallai-n-c-1}. Thus we can apply Claim \ref{cl:Gallai-n-c-1} in Step 1, so the algorithm is valid.
\end{proof}

Now we have all ingredients to present our proof of Theorem~\ref{th:q=k-1}.

\begin{proof}[Proof of Theorem~\ref{th:q=k-1}] The cases $c\in\{1, 2\}$ follow from Theorems \ref{th:q=p-2} and \ref{th:q=p-3}, so we may assume that $c\geq 3$. The lower bound $g^k_{k-1}(p)>p$ is trivial. For the upper bound, let $G$ be a Gallai-$k$-coloring of $K_{p+1}$. We may assume that $G$ is an exact Gallai-$k$-coloring, where $k=p-c=p+1-(c+1)$. By Lemma \ref{le:Gallai-n-c}, the number of colors each inducing a star in $G$ is at least $\frac{p+1}{(7+(c+1))^{c+1}}\geq 2$. Let $i$ be a color that induces a star in $G$, and let $v$ be a vertex with maximum degree in this star. Then $G-v$ is a copy of $K_p$ using at most $k-1$ colors. The result follows.
\end{proof}

\section{Concluding remarks}
\label{sec:ch-remark}

In this paper, we studied the behavior of $g(n,p,q)$, which is the minimum number of colors that are needed for $K_n$ to have a Gallai-coloring in which every $K_p$ receives at least $q$ distinct colors. For this purpose it was convenient to consider the closely related function $g^k_{q}(p)$. We now recapitulate what the above results on $g^k_{q}(p)$ imply for the function $g(n,p,q)$.

Corollary \ref{co:q=p-1} implies that $g(n,p,q)$ makes sense only for $2\leq q\leq p-1$. Theorem \ref{th:q} implies that $g(n,p,q)>\frac{q-1}{2(p-2)}(\log_2 n -1)$. For appropriate $p$ and $n$, Theorems \ref{th:q=p-2}, \ref{th:q=p-3}, \ref{th:p=5}, \ref{th:q=sqrt} and \ref{th:q=log} imply that $g(n,p,p-1)=n-1$, $g(n,p,p-2)=n-2$, $g(n,5,3)=\lceil\log_2 n\rceil$, $g(n,p,\lfloor\sqrt{p-1}\rfloor)\leq \left\lceil\sqrt{n}\right\rceil-1$ and $g(n,p,\lfloor\log_2(p-1)\rfloor+1)\leq \lceil\log_2 n\rceil$, respectively.

We remark that the behavior of $g(n,p,q)$ is very different from $f(n,p,q)$, as may be seen by noting that in the case $q=p-1$, Conlon et al. \cite{CFLS2} proved that $f(n,p,p-1)$ is subpolynomial in $n$, but here we show that $g(n,p,p-1)=n-1$.  A natural problem is to find the threshold for linear $g(n, p, q)$, i.e., the smallest $q$ such that $g(n, p, q)$ is linear in $n$. We were not able to solve this problem, but in light of Theorems \ref{th:q=p-2}, \ref{th:q=p-3} and \ref{th:q=k-1}, we conjecture the following.

\noindent\begin{conjecture}\label{conj:c} For any constant $c\geq 2$, there exists a $p_0$ such that for all integers $p\geq p_0$ and $k\geq p-c$, we have $g^{k}_{p-c}(p)=k+c$. {\rm (}Equivalently, for any constant $c'\geq 1$, there exists a $p_0$ such that for all integers $p\geq p_0$ and $n\geq p$, we have $g(n,p,p-c')=n-c'$.{\rm )}
\end{conjecture}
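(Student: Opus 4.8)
The plan is to prove the equivalent statement $g^k_{p-c}(p)=k+c$ for all $k\ge p-c$ by a double induction: an outer induction on $c$, with base cases $c=2$ and $c=3$ supplied by Theorems~\ref{th:q=p-2} and~\ref{th:q=p-3}, and, inside the outer step, an inner induction on $k$, with base case $k=p-c$ (where $g^{p-c}_{p-c}(p)=p$ holds trivially, since a Gallai-coloring of $K_p$ that has only $p-c$ colors at its disposal is automatically $(p-c)$-colored). Throughout, the threshold $p_0(c)$ is to be chosen large at the end.

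For the lower bound $g^k_{p-c}(p)\ge k+c$ I would exhibit a Gallai-$k$-coloring of $K_{k+c-1}$ with no $(p-c)$-colored $K_p$, generalizing the constructions used for $c\in\{2,3\}$. On the vertex set $\{v_1,\dots,v_{k-1},u_1,\dots,u_c\}$ set $c(v_iv_j)=\min\{i,j\}$, $c(v_iu_\ell)=i$ for all $\ell$, and $c(u_au_b)=k$. Every triangle among the $v$'s or meeting one $u$ uses at most two colors from the nested-star pattern, and every triangle meeting two $u$'s uses some color $i\le k-1$ together with color $k$, so this is a Gallai-$k$-coloring. If a $K_p$ uses $s$ of the $v$'s (indices $i_1<\dots<i_s$) and $t$ of the $u$'s, its colour set is exactly $\{i_1,\dots,i_s\}$, together with color $k$ precisely when $t\ge2$; since $t\le c$, this is at least $p-(c-1)$ colors, so no $(p-c)$-colored $K_p$ occurs.

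For the upper bound $g^k_{p-c}(p)\le k+c$ I would argue by contradiction. Let $G$ be a Gallai-$k$-coloring of $K_{k+c}$ with no $(p-c)$-colored $K_p$. By the inner induction I may assume $G$ is exact: if $G$ used at most $k-1$ colors, then on $k+c\ge(k-1)+c$ vertices the inner hypothesis $g^{k-1}_{p-c}(p)=k-1+c$ would already produce a $(p-c)$-colored $K_p$. Take a Gallai-partition $V_1,\dots,V_m$ (Theorem~\ref{th:Gallai}) with $m$ minimal; picking nonempty subsets $V_i'\subseteq V_i$ of total size $p$ shows $m\le c+1$, as $m\ge c+2$ parts with at most two colors between them already force a $(p-c)$-colored $K_p$. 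Writing $|S|\in\{1,2\}$ for the number of colors used between parts and $d_i=|V_i|-|C(V_i)|\ge1$ for the deficiency of each part (Theorem~\ref{th:antiR}), the standard disjointness claims — each between-color is absent inside every part, and the palettes $C(V_i)$ are pairwise disjoint, since a violation saves a color — give $\sum_i d_i=c+|S|$. The heart of the argument is then a combination step: choose in each $G[V_i]$ a $(p_i-e_i)$-colored $K_{p_i}$ with $\sum_i p_i=p$ and $\sum_i e_i\ge c+|S|$; appending the $\le|S|$ between-colors yields a $(p-c)$-colored $K_p$. Since $G[V_i]$ is an exact Gallai-$(|V_i|-d_i)$-coloring, the outer hypothesis with parameter $e_i<c$ furnishes such a clique with savings $e_i$ whenever $e_i\le d_i$ and $p_i\ge p_0(e_i)$, because then $g^{|C(V_i)|}_{p_i-e_i}(p_i)=|V_i|-d_i+e_i\le|V_i|$. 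Hence, as long as every $d_i<c$, taking $e_i=d_i$ gives $\sum e_i=c+|S|$ and a contradiction.

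The main obstacle — and the reason this is stated only as a conjecture — is a part of large deficiency $d_i\ge c$, which by $\sum d_i=c+|S|\le c+2$ can occur only when $m=2$ and the other part has minimal deficiency; on such a part the outer induction gives no leverage, since its color-deficiency already meets or exceeds $c$. I would split on the size of this part $V_2$: if $V_2$ is color-rich, say $|C(V_2)|\ge p-c$, then $G[V_2]$ is a Gallai-$|C(V_2)|$-coloring of a clique on at least $g^{|C(V_2)|}_{p-c}(p)=|C(V_2)|+c\le|V_2|$ vertices (using $c\le d_2$ and the inner induction on $k$), so a $(p-c)$-colored $K_p$ already lives inside $V_2$. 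The genuinely delicate case is when $V_2$ is small but color-rich: here one must construct the $K_p$ by hand, extracting a deeply color-deficient sub-clique of $V_2$ via Theorems~\ref{th:q=p-2}–\ref{th:q=p-3} and attaching the remaining vertices one at a time through Lemma~\ref{le:Gallai}, each new vertex adding at most one color, exactly as in the endgame of the proof of Theorem~\ref{th:q=p-3}. Making this uniform in $c$ — and guaranteeing that the allocation $\sum p_i=p$ respects all thresholds $p_i\ge p_0(e_i)$ simultaneously across deficiencies $d_i\le c+1$ — is where I expect the real difficulty to concentrate; the rapidly growing thresholds signalled by the bound $2(8+c)^{c+1}$ of Theorem~\ref{th:q=k-1} and the star-counting estimate of Lemma~\ref{le:Gallai-n-c} are precisely what must be controlled for the outer induction to close.
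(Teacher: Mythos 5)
This statement is a conjecture in the paper, not a theorem: the authors explicitly say they were unable to find the threshold, and they supply only the lower-bound construction $g^k_{p-c}(p)\geq k+c$ together with the partial confirmations $c\in\{2,3\}$ (Theorems~\ref{th:q=p-2} and~\ref{th:q=p-3}) and $k=p-c+1$ for $p\geq 2(8+c)^{c+1}-1$ (Theorem~\ref{th:q=k-1}). So there is no proof in the paper to compare against, and your proposal --- as you yourself concede in the final paragraph --- does not close the conjecture either. What you do establish is correct and coincides with the paper: after relabeling (your $u_1,\dots,u_c$ playing the role of $v_k,\dots,v_{k+c-1}$), your coloring of $K_{k+c-1}$ is exactly the construction the paper records immediately after the conjecture, and your count showing every $K_p$ receives at least $p-c+1$ colors is sound.

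On the upper-bound sketch, one step you present as routine is in fact where the approach breaks for $c\geq 4$: you assert that ``the standard disjointness claims --- each between-color is absent inside every part, and the palettes $C(V_i)$ are pairwise disjoint, since a violation saves a color --- give $\sum_i d_i=c+|S|$.'' But a single violation saves exactly one color, so for $m=2$ it only produces a $K_p$ with at most $p-2$ colors --- enough when $c=2$, yet $c-2$ colors short in general. Already at $c=3$ the paper must work much harder: palette disjointness (Claim~\ref{cl:q=p-3 4}) consumes the bulk of the proof of Theorem~\ref{th:q=p-3}, combining the one saved color with deficient sub-cliques extracted via Theorem~\ref{th:q=p-2} and the vertex-attachment endgame through Lemma~\ref{le:Gallai}. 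For general $c$ one must accumulate $c-1$ independent savings, and those can only come from the deficiency-controlled sub-cliques that your outer induction is supposed to supply --- so the disjointness claims sit \emph{inside} the induction, not before it, and the identity $\sum_i d_i=c+|S|$ is not available at the point you invoke it. Combined with the threshold-uniformity issue you correctly flag (all $p_i\geq p_0(e_i)$ must hold simultaneously while $\sum_i p_i=p$, with the only quantitative handle being the rapidly growing bounds behind Theorem~\ref{th:q=k-1} and Lemma~\ref{le:Gallai-n-c}), this makes your proposal a reasonable program consistent with how the known cases are proved, but not a proof; the statement remains open, exactly as the paper presents it.
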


The following construction shows that $g^k_{p-c}(p)\geq k+c$. Let $G$ be a copy of $K_{k+c-1}$ with vertex set $\{v_1, v_2, \ldots, v_{k+c-1}\}$. For every $1\leq i\leq k$ and $i< j\leq k+c-1$, we color the edge $v_iv_j$ using color $i$, and we color all the remaining edges with color $k$. In the case $c\in \{2, 3\}$, Theorems~\ref{th:q=p-2} and \ref{th:q=p-3} confirm Conjecture~\ref{conj:c}. For $c\geq 4$, Theorem~\ref{th:q=k-1} shows that $g^k_{p-c}(p)=k+c$ for $k=p-c+1$ and sufficiently large $p$.

Theorems \ref{th:q=sqrt} and \ref{th:q=log} imply that $g(n, p, \lfloor\sqrt{p-1}\rfloor)$ and $g(n,p,\lfloor\log_2(p-1)\rfloor+1)$ are at most $O(n^{1/2})$ and $O(\log n)$, respectively. We know that $g(n,p,p-1)$ is linear in $n$, $g(n, p, 2)$ is logarithmic in $n$, and $g(n, p, q)\geq g(n, p, q-1)$. Thus for any fixed $p$ (where $p$ is large enough), there exists a value $q$ such that $g(n, p, q)$ is polynomial in $n$ and $g(n, p, q-1)$ is subpolynomial in $n$. Another natural problem is to find the smallest $q$ such that $g(n, p, q)=\Theta(n^c)$ for some constant $0<c<1$, and the largest $q$ such that $g(n, p, q)=\Theta(\log n)$.

Recently, Krueger \cite{Kru} studied the minimum number of colors in an edge-coloring of $K_n$ such that every $P_m$ receives at least $q$ colors. For a general fixed graph $H$, one can also study the minimum number of colors in a Gallai-coloring of $K_n$ such that every $H$ receives at least $q$ colors. This problem generalizes the concept of Gallai-Ramsey numbers of graphs. For recent results on Gallai-Ramsey theory, we refer the interested reader to \cite{LSSZ,LMSSS,LiCh}.

\section*{Acknowledgement}

The authors are grateful to the anonymous referees for valuable comments, suggestions and corrections which improved the presentation of this paper.

\appendix

\section*{Appendix}

\section{Proof of $g^4_3(6)=8$}
\label{sec:A}

We first show that $g^4_3(6)>7$ by construction. Taking a copy of $K_7$ with vertex set $U\cup \{x, y, z\}$, where $U=\{u, v, w, s\}$, we color the edges such that $c(uv)=c(vw)=c(ws)=1$, $c(vs)=c(su)=c(uw)=2$, $c(x, U)=c(U, z)=c(zy)=3$ and $c(U, y)=c(yx)=c(xz)=4$. It is easy to check that the resulting coloring is a Gallai-4-coloring of $K_7$ without a 3-colored $K_6$.

Next we prove that $g^4_3(6)\leq 8$. For a contradiction, suppose that $G$ is a Gallai-4-colored $K_8$ containing no 3-colored $K_6$ and $V(G)=\{u_1, u_2, \ldots, u_8\}$. For the proof of Claim \ref{cl:A_1} below, we will use K\"{o}nig's Theorem \cite{Kon} which states that the size of a minimum covering is the same as  the size of a maximum matching in a bipartite graph, where a covering of a graph $F$ is a subset $V\subseteq V(F)$ such that every edge of $F$ has at least one end in $V$.

\begin{claim}\label{cl:A_1} There is no vertex $u\in V(G)$ such that $|C(u, V(G-u))|=1$.
\end{claim}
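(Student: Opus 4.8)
The plan is to argue by contradiction: suppose some $u\in V(G)$ has $C(u,V(G-u))=\{1\}$, and write $W=V(G-u)$, so $|W|=7$ and every edge from $u$ to $W$ has color $1$. Throughout I would lean on the fact that, because $|V(G)|=8$, any monochromatic $K_4$ or any $2$-colored $K_5$ in $G$ extends to a $3$-colored $K_6$: one adjoins new vertices one at a time, each adding at most one new color by Lemma~\ref{le:Gallai}, and there are always enough spare vertices ($8-4\ge 2$, respectively $8-5\ge 1$). Hence $G$ contains \emph{no} monochromatic $K_4$ and \emph{no} $2$-colored $K_5$; these two exclusions are the workhorses of the argument.

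First I would record the structure of color $1$ inside $W$. Let $G_1$ be the subgraph of $G[W]$ formed by the color-$1$ edges. A color-$1$ triangle in $W$ would, together with $u$, give a monochromatic $K_4$; a $C_4$ in $W$ with three color-$1$ edges would, once the Gallai constraint forces its diagonals, give with $u$ a $2$-colored $K_5$; and an odd color-$1$ cycle forces a color-$1$ triangle by the usual chord argument. These are exactly the three steps carried out in the proof of Theorem~\ref{th:p=5}, and together they show that $G_1$ is bipartite. This bipartiteness is precisely what makes K\"onig's Theorem applicable.

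Next comes the reduction of the target. A $3$-colored $K_6$ arises as soon as either (a) some five vertices of $W$ span a $K_5$ that omits a color of $\{2,3,4\}$ — for then $\{u\}$ together with these five uses color $1$ and at most two further colors — or (b) some six vertices of $W$ span a $K_6$ using at most three colors. Applying K\"onig's Theorem to the bipartite graph $G_1$, its minimum vertex cover has size $\tau(G_1)=\nu(G_1)$, the maximum matching size. If $\nu(G_1)\le 1$, then a cover of size at most $1$ leaves at least six vertices of $W$ spanning a color-$1$-free clique; these use only colors from $\{2,3,4\}$, so we obtain a $3$-colored $K_6$ as in (b). This disposes of the cases $\nu(G_1)\in\{0,1\}$ at once.

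The main obstacle is the remaining range $\nu(G_1)\in\{2,3\}$, where the largest color-$1$-free set has only $7-\nu(G_1)\in\{4,5\}$ vertices, so (b) no longer follows for free. Here I would fix a minimum cover (a set of $\nu(G_1)$ vertices meeting every color-$1$ edge) and study the color-$1$-free $K_5$ or $K_4$ it leaves behind: being color-$1$-free and containing no $2$-colored $K_5$, it must use all of $\{2,3,4\}$. Combining this with the anti-Ramsey bound of Theorem~\ref{th:antiR} applied to the small side of the bipartition (a color-$1$-free $K_{\le 3}$ uses at most two colors) and with the exclusion of monochromatic $K_4$'s, one locates either the five-set of (a) or the six-set of (b). Completing this step is a short but genuine case analysis on the value of $\nu(G_1)$ and on how the colors $2,3,4$ distribute across the two parts and the non-color-$1$ cross edges; this is where essentially all of the remaining work lies.
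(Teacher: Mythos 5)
Your framework is the same as the paper's: pass to the color-$1$ subgraph of $G-u$, prove it is bipartite (no color-$1$ triangle, no $C_4$ with exactly three color-$1$ edges, hence no odd cycle), invoke K\"onig's Theorem, and split on the size of a minimum cover. Your preliminary observations (no monochromatic $K_4$, no $2$-colored $K_5$, via Lemma~\ref{le:Gallai}), the bipartiteness argument, and the disposal of cover size $\le 1$ are all correct, and the bound $\nu\le 3$ that you use implicitly does follow from bipartiteness on $7$ vertices.

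However, there is a genuine gap: the cases $\nu\in\{2,3\}$, which you explicitly defer as ``a short but genuine case analysis,'' are not routine bookkeeping --- they are the core of the proof, and the hints you give would not complete them. First, your claim that the color-$1$-free clique left by the cover ``must use all of $\{2,3,4\}$'' is only valid for the $K_5$ (case $\nu=2$); for the $K_4$ (case $\nu=3$) it is unjustified, since a $K_4$ trivially contains no $2$-colored $K_5$ and may well be $2$-colored. Second, your sketch never uses the maximum \emph{matching} that K\"onig's Theorem provides, whereas the paper's argument in both hard cases is anchored on it. For $\nu=3$, the paper takes a color-$1$ matching $u_1u_4, u_2u_5, u_3u_6$ between the cover $S$ and $T$, notes that the triangle $\{u_4,u_5,u_6\}$ uses at most two colors (Theorem~\ref{th:antiR}), forces all $S$--$T$ edges into $\{1,2,3\}$ by rainbow-freeness, deduces that $S$ must be a monochromatic triangle in color $4$ (else six of the vertices plus $u_8$ already contain a $3$-colored $K_6$), and then concludes $c(S,\{u_4,u_5,u_6\})=1$, giving the final $3$-colored $K_6$. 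For $\nu=2$, the paper needs Theorem~\ref{th:q=p-2} in the form $g^3_2(4)=5$ to extract a $2$-colored $K_4$ inside the $5$-vertex color-$1$-free set $T$, and then a matching edge together with Lemma~\ref{le:Gallai} to reach the contradiction; nothing in your proposal plays the role of this appeal to $g^3_2(4)=5$. Without these two arguments the proof does not close.
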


\begin{proof} Suppose that there exists a vertex $u\in V(G)$ such that $|C(u, V(G-u))|=1$, say $c(u_8, V(G-u_8))=1$. Let $F$ be the spanning subgraph of $G-u_8$ consisting of all the edges in color 1. Let $S\subseteq V(G-u_8)$ be a minimum covering of $F$ and $T=V(F)\setminus S=V(G)\setminus (S\cup \{u_8\})$. In order to avoid a 3-colored $K_6$, we have $\left|S\right|\geq 2$.

We next show that $F$ is a bipartite graph. First, $F$ contains no $C_3$; otherwise $G$ contains a monochromatic $K_4$ in color 1, so there is a 3-colored $K_6$ by Lemma \ref{le:Gallai}. Second, suppose that $W$ is a copy of $C_5$ in $F$. It is easy to check that $|C(V(W))\cap \{2,3,4\}|\leq 2$ in order to avoid a rainbow triangle, so $W\cup \{u_8\}$ forms a 3-colored $K_6$, a contradiction. Thus $F$ contains no $C_5$. Finally, suppose that $F$ contains a copy of $C_7$, say $u_1u_2\cdots u_7u_1$. Since $F$ contains no $C_3$, we may assume that $c(u_1u_3)=2$ without loss of generality. In order to avoid a rainbow triangle in $G$ or a $C_5$ in $F$, we have $c(u_1u_4)=2$. Then $c(u_2u_4)=2$ in order to avoid a rainbow triangle in $G$ or a $C_3$ in $F$. Now $\{u_8, u_1, u_2, u_3, u_4\}$ forms a 2-colored $K_5$. By Lemma \ref{le:Gallai}, there is a 3-colored $K_6$ in $G$. This contradiction implies that $F$ contains no odd cycle and thus $F$ is a bipartite graph.

Since $F$ is bipartite, we have $|S|\leq \left\lfloor\left|V(G-u_8)\right|/2\right\rfloor=3$. If $\left|S\right|=3$, then $\left|T\right|=4$, say $S=\{u_1, u_2, u_3\}$ and $T=\{u_4, u_5, u_6, u_7\}$. By K\"{o}nig's Theorem, there is a matching of size 3 in $F$, i.e., $G-u_8$ contains three pairwise nonadjacent edges in color 1. It is easy to check that these three edges must appear  between $S$ and $T$. Without loss of generality, we may assume that $c(u_1u_4)=c(u_2u_5)=c(u_3u_6)=1$. Note that $1\notin C(T)$, so we may further assume that $C(\{u_4, u_5, u_6\})\subseteq \{2, 3\}$. Then $C(S, \{u_4, u_5, u_6\})\subseteq \{1, 2, 3\}$ in order to avoid a rainbow triangle. Now $S$ forms a monochromatic $K_3$ in color 4; otherwise there is a 3-colored $K_6$ within $S\cup \{u_8, u_4, u_5, u_6\}$. Since $G$ is a Gallai-coloring, it is easy to see that $c(S, \{u_4, u_5, u_6\})=1$. Then $S\cup \{u_8, u_4, u_5\}$ forms a 3-colored $K_6$, a contradiction. Therefore, we have $\left|S\right|=2$ and $\left|T\right|=5$, say $S=\{u_1, u_2\}$ and $T=\{u_3, u_4, \ldots, u_7\}$. Note that $C(T)\subseteq \{2, 3, 4\}$, so there is a 2-colored $K_4$ within $T$ by Theorem \ref{th:q=p-2}. Without loss of generality, we may assume that $C(T')\subseteq\{2, 3\}$, where $T'=\{u_3, u_4, u_5, u_6\}$. Moreover, we may assume that $c(u_1u_3)=1$ by K\"{o}nig's Theorem. Then $C(T'\cup \{u_1\})=\{1, 2, 3\}$ by Lemma \ref{le:Gallai}, which implies that $T'\cup \{u_8, u_1\}$ forms a 3-colored $K_6$, a contradiction.
\end{proof}

Let $V_1, V_2, \ldots, V_m$ be a Gallai-partition of $V(G)$ such that $m$ is minimum. If there is only one color between these parts, then we assume that color 1 is this color, and if there are two colors between these parts, then we assume that colors 1 and 2 are these two colors.

\begin{claim}\label{cl:A_2} $m\leq 3$.
\end{claim}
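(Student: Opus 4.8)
The plan is to assume $m\ge 4$ and produce a $3$-colored $K_6$, contradicting the choice of $G$. The organizing observation is that it suffices to exhibit a \emph{$2$-colored} $K_5$: if $V\subseteq V(G)$ induces a $K_5$ with $|C(V)|\le 2$, then since $|V(G)|=8$ we may pick any $v\notin V$, and Lemma~\ref{le:Gallai} gives $|C(v,V)\setminus C(V)|\le 1$, so $V\cup\{v\}$ is a $3$-colored $K_6$. Thus throughout I hunt either for a $2$-colored $K_5$ or directly for a $3$-colored $K_6$. First I dispose of $m\ge 5$: by Theorem~\ref{th:Gallai} at most two colors appear between the parts, so choosing one vertex from each of five parts yields a $K_5$ all of whose edges are between-part edges, hence a (possibly monochromatic) $2$-colored $K_5$. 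This forces $m\le 4$.

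It remains to rule out $m=4$, where the minimality of $m$ is used twice. If only one color occurred between the parts, grouping the four parts into two blocks would give a Gallai-partition into two parts, contradicting minimality; hence exactly two colors, say $1$ and $2$, occur between the parts. Now if some part contains an internal edge of color $1$ or $2$, its two endpoints together with one vertex from each of the other three parts span a $K_5$ whose edges all lie in $\{1,2\}$, a $2$-colored $K_5$; and if some part has at least five vertices, then either it contains such an internal edge (previous case) or its internal coloring uses only colors in $\{3,4\}$ and is therefore itself a $2$-colored $K_5$. So I may assume every part has at most four vertices and every internal edge is colored $3$ or $4$; in particular colors $1,2$ occur only between parts and colors $3,4$ only inside parts.

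Under these assumptions the four part-sizes form a partition of $8$, namely one of $(5,1,1,1),(4,2,1,1),(3,3,1,1),(3,2,2,1),(2,2,2,2)$, the first being already excluded. When two parts $V_i,V_j$ satisfy $|V_i|+|V_j|\ge 6$ — which covers $(4,2,1,1)$ and $(3,3,1,1)$ — the set $V_i\cup V_j$ uses only the single between-color $c(V_i,V_j)$ together with internal colors from $\{3,4\}$, hence at most three colors, and any six of its vertices give a $3$-colored $K_6$. For the low-spread distributions $(3,2,2,1)$ and $(2,2,2,2)$ the plan differs: using all four parts the two between-colors both appear, so instead I choose six vertices whose \emph{internal} edges are monochromatic, giving the bound $2+1=3$. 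Concretely, I delete two vertices so as to destroy all internal edges of the less frequent of the two colors $3,4$; a short check shows that in $(2,2,2,2)$ two parts must share an internal color by pigeonhole on $\{3,4\}$, while in $(3,2,2,1)$ the size-$3$ part is a $2$-colored triangle, each of whose colors is covered by a single vertex, so two deletions always suffice.

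I expect the main obstacle to be precisely this balanced case $m=4$ with internal palette $\{3,4\}$ and sizes $(3,2,2,1)$ or $(2,2,2,2)$: there the crude transversal and union bounds yield only a $3$-colored $K_5$ or a $4$-colored $K_6$, and one must genuinely exploit the freedom in choosing which six of the eight vertices to retain, trimming the two internal colors down to one. The pigeonhole on $\{3,4\}$ together with the small vertex-cover argument for the size-$3$ part is what makes this step go through, and assembling it into the required $3$-colored $K_6$ (equivalently, a $2$-colored $K_5$) is the crux of the claim.
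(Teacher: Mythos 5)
Your proof is correct, but it takes a genuinely different route from the paper's. Both arguments open identically: a transversal of five parts gives a $2$-colored $K_5$ (hence $m=4$ via Lemma~\ref{le:Gallai}), colors $1,2$ cannot appear inside a part, and no part has five or more vertices. From there, however, the paper never enumerates part-size distributions. It instead proves two further structural facts: each of the internal colors $3$ and $4$ is used in \emph{exactly one} part (two parts sharing an internal color, plus one vertex from each of the other two parts, give a $K_6$ on colors $\{1,2,3\}$), and no part contains a monochromatic $K_3$ or a $2$-colored $K_5$. These force the part containing both internal colors (if any) to have at most $4$ vertices, a part carrying a single internal color to have at most $2$, and every other part to be a singleton, whence $|V(G)|\leq\max\{4+1+1+1,\,2+2+1+1\}=7<8$, a contradiction. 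You replace this one-line count by a case split over the partitions $(4,2,1,1),(3,3,1,1),(3,2,2,1),(2,2,2,2)$, handling the first two by taking the union of two parts of total size at least $6$ (one between-color plus internal colors from $\{3,4\}$), and the last two by the pigeonhole/deletion trick. Both work; the paper's confinement-of-colors observation buys a shorter, purely counting conclusion, while your version is more constructive --- it exhibits the six vertices explicitly --- at the cost of the case analysis.

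One sentence of yours needs patching: in the $(3,2,2,1)$ case you assert that the size-$3$ part \emph{is} a $2$-colored triangle, but nothing you have established rules out a monochromatic triangle (the paper excludes this outright via its mono-$K_3$ argument). Fortunately your stated strategy of destroying the less frequent internal color still covers that sub-case: if the triangle is monochromatic, the other internal color occupies at most the two edges inside the size-$2$ parts, so deleting one endpoint of each (at most two deletions) again leaves six vertices whose internal edges carry a single color, giving the desired $3$-colored $K_6$.
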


\begin{proof} Suppose $m\geq 4$. In order to avoid a 3-colored $K_6$, the following statements hold: (1) $m=4$; (2) $1, 2\notin \bigcup^{m}_{i=1}C(V_i)$; (3) color 3 (resp., color 4) is used in exactly one of these parts; (4) there is neither a 2-colored $K_5$ nor a monochromatic $K_3$ within each part. Thus we have $\sum^{m}_{i=1}\left|V_i\right|\leq \max\{4+1+1+1, 2+2+1+1\}=7< \left|V(G)\right|$, a contradiction.
\end{proof}

By Claim \ref{cl:A_2} and the minimality of $m$, we have $m=2$. Note that at most one of $1\in C(V_1)$ and $1\in C(V_2)$ holds; otherwise $G$ contains a monochromatic $K_4$ and thus $G$ contains a 3-colored $K_6$ by Lemma \ref{le:Gallai}. First, we consider the case that $1\notin C(V_1)$ and $1\notin C(V_2)$, i.e., $C(V_i)\subseteq \{2, 3, 4\}$ for $i\in [2]$. In this case, we have $\left|V_1\right|\leq 5$ and $\left|V_2\right|\leq 5$, so $\left|V_1\right|\geq 3$ and $\left|V_2\right|\geq 3$. We claim that there is no 2-colored $K_4$ within each $V_i$ for $i\in [2]$. Indeed, if there is a 2-colored copy $K$ of $K_4$ in some $V_i$, say $i=1$ and $C(K)\subseteq \{2, 3\}$, then $C(V_2)=\{4\}$ in order to avoid a 3-colored $K_6$. If $\left|V_2\right|=4$, then it is easy to find a 3-colored $K_6$ in $G$. If $\left|V_2\right|=3$, then $\left|V_1\right|=5$. Now $G$ also contains a 3-colored $K_6$ no matter whether $4\in C(V_1)$ or $4\notin C(V_1)$. Hence, there is no 2-colored $K_4$ within each $V_i$ for $i\in [2]$. By Theorem \ref{th:q=p-2}, we have $g^3_2(4)=5$, so $\left|V_i\right|\leq 4$ for $i\in [2]$, which implies that $\left|V_1\right|=\left|V_2\right|=4$ and moreover, $C(V_1)=C(V_2)=\{2, 3, 4\}$. For any two distinct colors $c_1, c_2\in \{2, 3, 4\}$, we say that $c_1$ and $c_2$ are adjacent if there are two adjacent edges $e$ and $f$ such that $c(e)=c_1$ and $c(f)=c_2$. Note that if $c_1$ and $c_2$ are adjacent, then there is a 2-colored $K_3$ with colors $c_1$ and $c_2$. Without loss of generality, we may assume that colors 2 and 3 are adjacent in $G[V_1]$.  Then colors 2 and 3 are nonadjacent in $G[V_2]$, and thus color 4 is adjacent to both color 2 and color 3 in $G[V_2]$. But then color 4 is adjacent to neither color 2 nor color 3 in $G[V_1]$, a contradiction.

Finally, we consider the case that exactly one of $1\in C(V_1)$ and $1\in C(V_2)$ holds, say $1\in C(V_1)$ and $1\notin C(V_2)$.

\begin{claim}\label{cl:A_3} The following statements hold:
\begin{itemize}
  \item[{\rm (1)}] there is no monochromatic $K_3$ in color 1 within $V_1$;
  \item[{\rm (2)}] $C(V_1)\cap C(V_2)=\emptyset$.
\end{itemize}
\end{claim}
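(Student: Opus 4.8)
The plan is to show that a monochromatic triangle in color $1$ inside $V_1$ already forces a monochromatic $K_4$, and then to inflate this to a $3$-colored $K_6$. Concretely, suppose $\{u,v,w\}\subseteq V_1$ with $c(uv)=c(vw)=c(uw)=1$. Since $c(V_1,V_2)=1$ and $V_2\neq\emptyset$, picking any $x\in V_2$ makes the three edges $xu,xv,xw$ all have color $1$, so $\{u,v,w,x\}$ induces a monochromatic $K_4$. As $|V(G)|=8$, there remain four vertices outside this $K_4$; adding any two of them one at a time and applying Lemma~\ref{le:Gallai} at each step introduces at most one new color per vertex. Hence the resulting $K_6$ uses at most $1+2=3$ colors, contradicting the assumption that $G$ has no $3$-colored $K_6$.

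\textbf{Part (2), setup.} For the second statement I would argue by contradiction, assuming some color $c_0\in C(V_1)\cap C(V_2)$; note $c_0\neq 1$ because $1\notin C(V_2)$. Choosing an edge of color $c_0$ inside $V_1$ and an edge of color $c_0$ inside $V_2$ and taking the four endpoints yields a copy $K$ of $K_4$ whose six edges use only color $1$ (the four cross edges) and color $c_0$ (the two within-part edges); thus $K$ is a $2$-colored $K_4$. The problem then reduces to extending $K$ to a $3$-colored $K_6$, i.e.\ adjoining two further vertices so that together they introduce at most one new color.

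\textbf{Part (2), the extension.} By Lemma~\ref{le:Gallai}, each vertex outside $K$ adds at most one new color when appended to $K$, and since $1$ and $c_0$ are already present this new color must lie in the two-element set $\{2,3,4\}\setminus\{c_0\}$, say $\{\alpha,\beta\}$. If some outside vertex adds no new color, then $K$ together with it is a $2$-colored $K_5$ and any further vertex completes a $3$-colored $K_6$; so I may assume that every one of the four outside vertices is \emph{tagged} by exactly one of $\alpha,\beta$. A pigeonhole step produces two equally tagged vertices, say both tagged $\alpha$; if the edge between them avoids $\beta$, then $K$ plus these two is a $3$-colored $K_6$. The remaining, genuinely tight, configurations are those in which all mutual edges among same-tagged vertices are forced to the opposite color. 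Here a short no-rainbow-triangle analysis is decisive: for two $\alpha$-tagged vertices $w_1,w_2$ with $c(w_1w_2)=\beta$, every triangle $w_1w_2u$ with $u\in K$ has its two remaining edges in $\{1,c_0,\alpha\}$, so these must coincide, forcing $w_1$ and $w_2$ to carry \emph{identical} colors to each vertex of $K$; in particular four vertices tagged alike yield a monochromatic $\beta$-colored $K_4$, hence a $3$-colored $K_6$ exactly as in Part (1).

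\textbf{Main obstacle.} I expect Part (1) to be routine; the difficulty is concentrated in the tight configurations of Part (2) with an uneven ($3$--$1$) or balanced ($2$--$2$) split of the tags, where neither a monochromatic $K_4$ nor an immediately cheap $2$-colored $K_4$ is handed to us. Resolving these will require combining the identical-color observation above with a careful choice of which three vertices of $K$ to retain (so that the two $c_0$-edges are not both kept, keeping the within-$K$ palette small), together with Claim~\ref{cl:A_1} to exclude degenerate parts. This bookkeeping, rather than any single clever idea, is the crux.
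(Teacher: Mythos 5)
Your Part (1) is correct and is exactly the paper's argument: a color-$1$ triangle inside $V_1$ together with any vertex of $V_2$ is a monochromatic $K_4$, and two applications of Lemma~\ref{le:Gallai} turn it into a $3$-colored $K_6$.

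Part (2) has a genuine gap, and you have named it yourself: the tight $3$--$1$ and $2$--$2$ tag configurations are never resolved, only deferred to ``bookkeeping.'' This deferral is not bookkeeping; it is where the entire difficulty of your route sits. What you actually establish is: (i) if some outside vertex adds no new color to $K$ you are done; (ii) same-tagged vertices joined by an edge avoiding the opposite color finish the job; (iii) in the tight situation same-tagged vertices carry identical colors to $K$, which disposes of the $4$--$0$ split. Nothing in the sketch shows that ``retaining a careful triple of $K$'' closes the rest. For instance, in the tight $3$--$1$ case where the three $\alpha$-tagged vertices send color $\alpha$ to \emph{both} $V_1$-vertices $x,y$ of $K$, the natural $3$-colored $K_5$'s you can form (e.g.\ the tagged triple with $x,y$, colored by $\{\alpha,\beta,c_0\}$) can only be completed by the fourth outside vertex, and your argument controls none of that vertex's edges to the tagged triple; one is forced into a further rainbow-triangle analysis that also needs to know which Gallai part each outside vertex lies in. The balanced $2$--$2$ case requires yet another analysis. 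So, as written, the proof is incomplete.

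The reason you get stuck is that you never use the hypothesis $1\in C(V_1)$, which is the lever of the paper's proof. The paper asks whether colors $1$ and $c_0$ appear on adjacent edges inside $V_1$. If they do, rainbow-freeness gives a $K_3$ in $V_1$ colored by $\{1,c_0\}$; joined to a $c_0$-edge of $V_2$ by cross edges of color $1$, this is a $2$-colored $K_5$, and Lemma~\ref{le:Gallai} finishes. If they do not, then a color-$1$ edge and a color-$c_0$ edge of $V_1$ are disjoint, the four edges between them must all carry one further color (again by rainbow-freeness), and this $3$-colored $K_4$ inside $V_1$ together with a $c_0$-edge of $V_2$ is already a $3$-colored $K_6$. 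Either way the contradiction is immediate, with no case analysis over outside vertices at all. Your construction of the $2$-colored $K_4$ from the two $c_0$-edges across the partition discards the color-$1$ edge inside $V_1$, and that is precisely why the case analysis balloons beyond what your sketch resolves.
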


\begin{proof} If (1) does not hold, then $G$ contains a monochromatic $K_4$. By Lemma \ref{le:Gallai}, there is a 3-colored $K_6$. This contradiction proves (1). We next prove (2). Suppose that $C(V_1)\cap C(V_2)\neq \emptyset$, say $2\in C(V_1)\cap C(V_2)$. If colors 1 and 2 are adjacent in $G[V_1]$, then $G$ contains a 2-colored $K_5$. By Lemma \ref{le:Gallai}, there is a 3-colored $K_6$, a contradiction. Thus colors 1 and 2 are nonadjacent in $G[V_1]$, and we may assume that $u_1, u_2, u_3, u_4\in V_1$ such that $c(u_1u_2)=1$, $c(u_3u_4)=2$ and $1, 2\notin C(\{u_1, u_2\}, \{u_3, u_4\})$. Since $G$ is a Gallai-coloring, we have $\left|C(\{u_1, u_2\}, \{u_3, u_4\})\right|=1$. Assume that $u_5u_6$ is an edge with color 2 in $G[V_2]$. Then $\{u_1, u_2, \ldots, u_6\}$ forms a 3-colored $K_6$, a contradiction. This completes the proof of (2).
\end{proof}

In order to avoid a 3-colored $K_6$, we have $\left|V_2\right|\leq 5$, so $\left|V_1\right|\geq 3$. Then $\left|C(V_1)\right|\geq 2$ by Claim \ref{cl:A_3} (1).
If $\left|C(V_1)\right|=2$, then $\left|C(V_2)\right|=2$ by Claim \ref{cl:A_3} (2). In this case, we have $\left|V_1\right|\leq 3$ (resp., $\left|V_2\right|\leq 4$) in order to avoid a 3-colored $K_6$. Then $\left|V_1\right|+\left|V_2\right|\leq 3+4<\left|V(G)\right|$, a contradiction.
If $\left|C(V_1)\right|=3$, then $\left|C(V_2)\right|=1$ by Claim \ref{cl:A_3} (2). In this case, we have $\left|V_1\right|\leq 4$ (resp., $\left|V_2\right|\leq 3$) in order to avoid a 3-colored $K_6$. Then $\left|V_1\right|+\left|V_2\right|\leq 4+3<\left|V(G)\right|$, a contradiction.
If $\left|C(V_1)\right|=4$, then $\left|C(V_2)\right|=0$ by Claim \ref{cl:A_3} (2). In this case, we have $\left|V_2\right|=1$, contradicting to Claim \ref{cl:A_1}. This contradiction completes the proof.

\section{Proof of $g^5_3(6)=10$}
\label{sec:B}

We first show that $g^5_3(6)>9$ by construction. Taking a copy of $K_9$ with vertex set $U\cup V\cup \{x, y\}$, where $U=\{r, s, t\}$ and $V=\{u, v, w, z\}$, we color the edges such that $c(\{x,y\}, U\cup V)=1$, $c(xy)=c(\{u, z\}, \{v, w\})=2$, $c(rs)=c(st)=c(vw)=3$, $c(rt)=c(uz)=4$ and $c(U, V)=5$. Let $G'$ be the resulting coloring. It is easy to check that $G'$ is a Gallai-coloring. Moreover, for any two distinct colors $i,j\in[5]$, we need to delete at least four vertices such that there is neither color $i$ nor color $j$ on edges of the remaining graph. Thus $G'$ is a Gallai-5-coloring of $K_9$ without a 3-colored $K_6$.

Next we prove that $g^5_3(6)\leq 10$. For a contradiction, suppose that $G$ is a Gallai-5-colored $K_{10}$ containing no 3-colored $K_6$ and $V(G)=\{u_1, u_2, \ldots, u_{10}\}$. Let $V_1, V_2, \ldots, V_m$ be a Gallai-partition of $V(G)$ such that $m$ is minimum. If there is only one color between these parts, then we assume that color 1 is this color, and if there are two colors between these parts, then we assume that colors 1 and 2 are these two colors.

If $m\geq 4$, then in order to avoid a 3-colored $K_6$, the following statements hold: (1) $m=4$; (2) $1, 2\notin \bigcup^{m}_{i=1}C(V_i)$; (3) for each $i\in \{3, 4, 5\}$, color $i$ is used in exactly one of these parts; (4) there is neither a 2-colored $K_5$ nor a monochromatic $K_3$ within each part. Thus we have $\sum^{m}_{i=1}\left|V_i\right|\leq \max\{5+3\cdot 1, 4+2+2\cdot 1, 2+2+2+1\}=8< \left|V(G)\right|$, a contradiction. Hence, $2\leq m\leq 3$. By the minimality of $m$, we have $m=2$. Note that at most one of $1\in C(V_1)$ and $1\in C(V_2)$ holds; otherwise $G$ contains a monochromatic $K_4$ and thus $G$ contains a 3-colored $K_6$ by Lemma \ref{le:Gallai}. We divide the rest of the proof into two cases.

\medskip\noindent
{\bf Case 1.} $1\notin C(V_1)$ and $1\notin C(V_2)$.
\vspace{0.05cm}

\noindent
In this case, we have $C(V_i)\subseteq \{2, 3, 4, 5\}$ for $i\in [2]$. Since $g^4_3(6)=8$, we have $\left|V_i\right|\leq 7$ and thus $\left|V_{3-i}\right|\geq 3$ for each $i\in [2]$. We claim that there is no 2-colored $K_4$ within each $V_i$ for $i\in [2]$. Indeed, if there is a 2-colored copy $K$ of $K_4$ in some $V_i$, say $i=1$ and $C(K)\subseteq \{2, 3\}$, then $C(V_2)\subseteq \{4, 5\}$ in order to avoid a 3-colored $K_6$, and thus $\left|V_2\right|\leq 4$ for the same reason. If $\left|V_2\right|=4$, then $4, 5\notin C(V_1)$, which implies that $G[V_1]$ is a 2-colored $K_6$, a contradiction. If $\left|V_2\right|=3$, then $\left|V_1\right|=7$. In order to avoid a 3-colored $K_6$, we have $C(V_1)=\{2, 3, 4, 5\}$, colors 4 and 5 are nonadjacent in $G[V_1]$, and $G[V_1]$ contains no monochromatic $K_3$ in color 4 or 5. If $G[V_1]$ contains a monochromatic $2K_2$ in color 4 and a monochromatic $2K_2$ in color 5, then $\left|V_1\right|\geq 8$, a contradiction. Thus we may assume that $G[V_1]$ contains no monochromatic $2K_2$ in color 4. Then there is a vertex $u\in V_1$ such that $G[V_1\setminus \{u\}]$ contains no edge in color 4, which implies that $G[V_1\setminus \{u\}]$ is a 3-colored $K_6$, a contradiction. Hence, there is no 2-colored $K_4$ within each $V_i$ for $i\in [2]$.

By Theorem \ref{th:q=p-2}, we have $g^4_2(4)=6$, so $\left|V_i\right|\leq 5$ for $i\in [2]$, which implies that $\left|V_1\right|=\left|V_2\right|=5$ and moreover, $C(V_1)=C(V_2)=\{2, 3, 4, 5\}$ since $g^3_2(4)=5$. Without loss of generality, we may assume that colors 2 and 3 are adjacent in $G[V_1]$. Recall that if two distinct colors $c_1$ and $c_2$ are adjacent, then there is a 2-colored $K_3$ with colors $c_1$ and $c_2$. Then colors 2 and 3 are nonadjacent in $G[V_2]$. We may assume that $u_1, u_2, u_3, u_4\in V_2$ such that $c(u_1u_2)=2$ and $c(u_3u_4)=3$. Then $C(\{u_1, u_2\}, \{u_3, u_4\})\subseteq \{4, 5\}$. In order to avoid a rainbow triangle, there is exactly one color on the edges between $\{u_1, u_2\}$ and $\{u_3, u_4\}$, say color 4. Then color 4 is adjacent to neither color 2 nor color 3 in $G[V_1]$, so color 4 is adjacent to color 5 in $G[V_1]$. Since $\left|V_2\right|=5$ and $G[V_2]$ contains a monochromatic $K_{2,2}$ in color 4, we have that colors 4 and 5 are adjacent in $G[V_2]$. Then $G$ contains a 3-colored $K_6$ with colors 1, 4 and 5, a contradiction.

\medskip\noindent
{\bf Case 2.} Exactly one of $1\in C(V_1)$ and $1\in C(V_2)$ holds.
\vspace{0.05cm}

\noindent
Without loss of generality, we may assume that $1\in C(V_1)$ and $1\notin C(V_2)$. By the same argument as in the proof of  Claim \ref{cl:A_3}, we have $C(V_1)\cap C(V_2)=\emptyset$. Thus $\left|C(V_1)\right|+\left|C(V_2)\right|=5$. In order to avoid a 3-colored $K_6$ and since $g^4_3(6)=8$, we have the following inequality:
\begin{equation*}
\begin{aligned}
\left|V_1\right|+\left|V_2\right| &~ \leq
\begin{cases}
2+7, &~ \text{if }\left|C(V_1)\right|=1\text{ and }\left|C(V_2)\right|=4,\\
3+5, &~ \text{if }\left|C(V_1)\right|=2\text{ and }\left|C(V_2)\right|=3,\\
4+4, &~ \text{if }\left|C(V_1)\right|=3\text{ and }\left|C(V_2)\right|=2,\\
6+2, &~ \text{if }\left|C(V_1)\right|=4\text{ and }\left|C(V_2)\right|=1,
\end{cases}
\end{aligned}
\end{equation*}
contradicting  the fact that $\left|V(G)\right|=10$. Thus it suffices to consider the case $\left|C(V_1)\right|=5$ and $\left|C(V_2)\right|=0$. In this case, we have $\left|V_2\right|=1$, say $V_2=\{u_{10}\}$. Let $F$ be the spanning subgraph of $G[V_1]$ consisting of all the edges in color 1. Let $S\subseteq V_1$ be a minimum covering of $F$ and $T=V_1\setminus S$. Note that $C(T)\subseteq \{2, 3, 4, 5\}$. Since $g^4_3(6)=8$, we have $\left|T\right|\leq 7$ and thus $\left|S\right|\geq 2$.

We next show that $F$ is a bipartite graph. By similar arguments as in the proof of Claim \ref{cl:A_1}, there is no $C_{\ell}$ in $F$ for $\ell\in\{3, 5, 7\}$. We now show that $F$ contains no $C_9$. Suppose for a contradiction that $u_1u_2\cdots u_9u_1$ is a cycle in $F$. Since $F$ contains no $C_3$, we may assume that $c(u_1u_3)=2$ without loss of generality. In order to avoid a rainbow triangle in $G$ or a $C_7$ in $F$, we have $c(u_1u_4)=2$. Then $c(u_2u_4)=2$ in order to avoid a rainbow triangle in $G$ or a $C_3$ in $F$. Now $\{u_{10}, u_1, u_2, u_3, u_4\}$ forms a 2-colored $K_5$. By Lemma \ref{le:Gallai}, there is a 3-colored $K_6$ in $G$. This contradiction implies that $F$ contains no odd cycle and thus $F$ is a bipartite graph.

Since $F$ is bipartite, we have $|S|\leq \left\lfloor\left|V_1\right|/2\right\rfloor=4$.

\begin{claim}\label{cl:B_1} $\left|S\right|=2$ and $\left|T\right|=7$.
\end{claim}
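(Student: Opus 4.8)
The plan is to use what is already in place, namely that $F$ is bipartite (so $|S|\le\lfloor|V_1|/2\rfloor=4$) and that $C(T)\subseteq\{2,3,4,5\}$ together with $g^4_3(6)=8$ forces $|T|\le 7$, hence $|S|\ge 2$; thus $|S|\in\{2,3,4\}$ and it remains to rule out $|S|=3$ and $|S|=4$. Throughout I would lean on two gadgets coming from $u_{10}$, which is joined to all of $V_1$ in colour $1$. First, $G$ contains no $2$-coloured $K_5$: any such, plus a sixth vertex (which adds at most one colour by Lemma~\ref{le:Gallai}), would be a $3$-coloured $K_6$. Second, if $T'\subseteq V_1$ induces a $2$-coloured $K_4$ with colour set $\{c_1,c_2\}\subseteq\{2,3,4,5\}$ and some vertex $w\ne u_{10}$ sends a colour-$1$ edge into $T'$, then $T'\cup\{u_{10},w\}$ is a $3$-coloured $K_6$; indeed $c(w,u_{10})=1$, and by Lemma~\ref{le:Gallai} the only colour $w$ can add to $\{c_1,c_2\}$ on $T'$ is $1$ itself, so all colours stay in $\{1,c_1,c_2\}$. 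I would also invoke K\"onig's Theorem~\cite{Kon}: a maximum matching of $F$ has size $|S|$, and since $T$ is independent in $F$ while $S$ is a minimum cover, this matching saturates $S$ and pairs each $s_i\in S$ with a distinct $t_i\in T$ along a colour-$1$ edge.

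For $|S|=3$ we have $|T|=6$. As $T$ is a $K_6$ with $C(T)\subseteq\{2,3,4,5\}$, avoiding a $3$-coloured $K_6$ forces $|C(T)|=4$, and then $g^4_2(4)=6$ (Theorem~\ref{th:q=p-2}) produces a $2$-coloured $K_4$ $T'\subseteq T$. By the second gadget, no $s_i$ may send a colour-$1$ edge into $T'$, so every partner satisfies $t_i\notin T'$; but there are three distinct $t_i$ and only $|T\setminus T'|=2$ vertices outside $T'$, a contradiction.

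For $|S|=4$ we have $|T|=5$ with $C(T)\subseteq\{2,3,4,5\}$ and $|C(T)|\in\{3,4\}$ (the lower bound from the no-$2$-coloured-$K_5$ gadget). If $|C(T)|=3$, then $g^3_2(4)=5$ (Theorem~\ref{th:q=p-2}) again yields a $2$-coloured $K_4$ $T'\subseteq T$, and the same counting finishes the case since now $|T\setminus T'|=1<4$. The sub-case $|C(T)|=4$ is, I expect, the main obstacle: here $T$ may be a $4$-coloured Gallai $K_5$ possessing neither a monochromatic triangle nor a $2$-coloured $K_4$, so the two uniform gadgets do not directly apply.

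To clear this last sub-case I would first record two further restrictions forced by $u_{10}$ and the four colour-$1$ edges $s_it_i$. A monochromatic triangle $t_1t_2t_3$ of $T$ (colour $c$) meeting a matched vertex is impossible, since $\{u_{10},s_1,t_1,t_2,t_3\}$ would be a $2$-coloured $K_5$ in colours $\{1,c\}$ (the edges $s_1t_2,s_1t_3$ lie in $\{1,c\}$ by the Gallai condition). Likewise, for each pair $i\ne j$ the set $\{u_{10},s_i,t_i,s_j,t_j\}$ has all its colours in $\{1,\,c(t_it_j),\,c(s_is_j)\}$, so avoiding a $2$-coloured $K_5$ forces $c(s_is_j)\notin\{1,c(t_it_j)\}$. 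Analysing a minimum Gallai-partition $T=W_1\cup W_2$ of the exceptional $K_5$ and the placement of the saturating matching, I would then feed short configurations such as $\{u_{10},s_i,t_i,u,v\}$ with $u,v\in T$ back into the second gadget: in each colour pattern of $S$ compatible with the pairwise constraints this should expose either a $2$-coloured $K_4$ meeting colour $1$ (hence a $2$-coloured $K_5$ with $u_{10}$) or an outright $3$-coloured $K_6$. This casework on the internal colouring of $S$ is the tedious but decisive step; its completion rules out $|S|=4$ and leaves only $|S|=2$, $|T|=7$.
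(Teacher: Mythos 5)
Your reductions for $|S|=3$ and for $|S|=4$ with $|C(T)|=3$ are correct and clean: the saturating matching from K\"{o}nig's Theorem \cite{Kon}, your second gadget (a $2$-colored $K_4$ in $T$ plus a color-$1$ edge into it plus $u_{10}$ gives a $3$-colored $K_6$ via Lemma~\ref{le:Gallai}), and the values $g^4_2(4)=6$, $g^3_2(4)=5$ from Theorem~\ref{th:q=p-2} do kill those configurations by your counting argument. However, the proof is not complete: the sub-case $|S|=4$, $|C(T)|=4$ is never actually settled. You record two correct auxiliary facts (no monochromatic triangle of $T$ through a matched vertex, and $c(s_is_j)\notin\{1,c(t_it_j)\}$ for matched pairs), but then only outline a plan (``I would then feed short configurations \ldots this should expose \ldots its completion rules out $|S|=4$''). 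This is a genuine gap rather than a routine verification, because the exceptional object really exists: color $K_5$ on $\{a,b,c,d,e\}$ by $c(ab)=4$, $c(cd)=5$, $c(ce)=c(de)=3$, and give all edges from $\{a,b\}$ to $\{c,d,e\}$ color $2$. This is a Gallai-$4$-coloring of $K_5$ (a substitution construction) with no monochromatic triangle and no $2$-colored $K_4$, so neither of your gadgets has a foothold on $T$, and the deferred casework on the coloring of $S$ is precisely the hard content of the claim.

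For comparison, the paper avoids splitting on $|C(T)|$ and treats $|S|\in\{3,4\}$ uniformly, needing only three matched pairs $s_1t_1$, $s_2t_2$, $s_3t_3$: the triangle $t_1t_2t_3$ is $2$-colored (Gallai condition and $1\notin C(T)$), say with colors $2,3$; every edge $s_it_j$ then lies in $\{1,2,3\}$ (no rainbow triangle with the color-$1$ matching edges); hence $C(\{s_1,s_2,s_3\})\subseteq\{4,5\}$, since any edge of $\{s_1,s_2,s_3\}$ colored in $\{1,2,3\}$ would make $\{s_i,s_j,t_1,t_2,t_3,u_{10}\}$ a $3$-colored $K_6$; this in turn forces every $s_it_j$ edge to have color $1$. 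Now a fourth vertex $u\in T$ (which exists since $|T|\geq 5$) finishes the argument either way: if some $c(ut_j)\in\{4,5\}$, then $\{s_1,s_2,s_3,t_j,u,u_{10}\}$ is $3$-colored, and otherwise $C(u,\{t_1,t_2,t_3\})\subseteq\{2,3\}$ and $\{s_1,t_1,t_2,t_3,u,u_{10}\}$ is $3$-colored. Replacing your unfinished sub-case with an argument of this type would repair the proposal; as written, the claim is not proved.
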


\begin{proof} If $\left|S\right|=3$ and $\left|T\right|=6$ (resp., $\left|S\right|=4$ and $\left|T\right|=5$), then $F$ contains a matching of size 3 (resp., 4) by K\"{o}nig's Theorem. It is easy to check that this matching must appear between $S$ and $T$. Without loss of generality, let $u_1, u_2, u_3\in S$, $u_4, u_5, \ldots, u_8\in T$, and $c(u_1u_4)=c(u_2u_5)=c(u_3u_6)=1$. Note that $1\notin C(T)$, so we may further assume that $C(\{u_4, u_5, u_6\})\subseteq \{2, 3\}$. Then $C(\{u_1, u_2, u_3\}, \{u_4, u_5, u_6\})\subseteq \{1, 2, 3\}$ in order to avoid a rainbow triangle. Now $1, 2, 3 \notin C(\{u_1, u_2, u_3\})$; otherwise there is a 3-colored $K_6$ in $G[\{u_1, u_2, \ldots, u_6, u_{10}\}]$. Thus $C(\{u_1, u_2, u_3\})\subseteq \{4, 5\}$. Then $c(\{u_1, u_2, u_3\},$ $\{u_4, u_5, u_6\})=1$ in order to avoid a rainbow triangle. Note that $1\notin C(T)$. If $C(u_7, \{u_4, u_5, u_6\})\cap\{4, 5\}\neq \emptyset$, say $c(u_7u_4)=4$, then $C(u_7, \{u_1, u_2, u_3\})\subseteq \{1,4\}$, which implies that $\{u_1, u_2, u_3,$ $u_4, u_7, u_{10}\}$ forms a 3-colored $K_6$, a contradiction. Thus $C(u_7, \{u_4, u_5, u_6\})\subseteq \{2, 3\}$. Then $c(u_1u_7)\in\{1,2, 3\}$, which implies that $\{u_1, u_4, u_5, u_6, u_7, u_{10}\}$ forms a 3-colored $K_6$, a contradiction.
\end{proof}

By Claim \ref{cl:B_1} and K\"{o}nig's Theorem, we may assume that $S=\{u_1, u_2\}$, $T=\{u_3, u_4, \ldots,$ $u_9\}$ and $c(u_1u_3)=c(u_2u_4)=1$ without loss of generality. Since $1\notin C(T)$, we may further assume that $c(u_3u_4)=2$. Then $c(u_1u_4), c(u_2u_3)\in \{1,2\}$, so $c(u_1u_2)\notin \{1,2\}$; otherwise $\{u_1, u_2, u_3, u_4, u_{10}\}$ forms a 2-colored $K_5$ and thus $G$ contains a 3-colored $K_6$ by Lemma \ref{le:Gallai}. Without loss of generality, let $c(u_1u_2)=3$. Then $c(u_1u_4)=c(u_2u_3)=1$ in order to avoid a rainbow triangle.

\begin{claim}\label{cl:B_2} For any vertex $u\in T\setminus \{u_3, u_4\}$, we have $4\in C(u, \{u_3, u_4\})\subseteq \{2, 4\}$ or $5\in C(u, \{u_3, u_4\})\subseteq \{2, 5\}$.
\end{claim}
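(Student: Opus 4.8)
The plan is to write $a := c(uu_3)$ and $b := c(uu_4)$, so that $C(u,\{u_3,u_4\}) = \{a,b\}$, and to pin down exactly which color sets $\{a,b\}$ are admissible. The target is to show $\{a,b\}$ must be one of $\{4\}$, $\{2,4\}$, $\{5\}$, $\{2,5\}$, which is precisely the assertion. First I would record the two constraints that come for free from the structure already built up. Since $u,u_3,u_4\in T$ and $T$ is an independent set of the color-$1$ subgraph $F$, neither $uu_3$ nor $uu_4$ has color $1$, so $a,b\in\{2,3,4,5\}$. Next, because $c(u_3u_4)=2$, the Gallai condition applied to the triangle $\{u,u_3,u_4\}$ forbids $\{a,b,2\}$ from being three distinct colors; hence $a=b$ or $2\in\{a,b\}$, and in particular the mixed pair $\{a,b\}=\{4,5\}$ is ruled out immediately.

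The remaining configurations that would violate the claim while still respecting these two constraints are exactly those with $a,b\in\{2,3\}$, i.e.\ $\{a,b\}\in\{\{2\},\{3\},\{2,3\}\}$. To eliminate these I would pass to the six-set $W:=\{u_{10},u_1,u_2,u_3,u_4,u\}$. Recall that the induced $K_5$ on $\{u_{10},u_1,u_2,u_3,u_4\}$ already uses only colors $\{1,2,3\}$: the vertex $u_{10}\in V_2$ sends color $1$ to all of $V_1$, the four edges from $\{u_1,u_2\}$ to $\{u_3,u_4\}$ have color $1$, while $c(u_3u_4)=2$ and $c(u_1u_2)=3$. Moreover $c(uu_{10})=1$. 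If $a,b\in\{2,3\}$, then applying the Gallai condition to the triangles $\{u,u_1,u_3\}$ and $\{u,u_2,u_3\}$, where $c(u_1u_3)=c(u_2u_3)=1$, forces $c(uu_1),c(uu_2)\in\{1,a\}\subseteq\{1,2,3\}$. Thus every edge of $G[W]$ receives a color from $\{1,2,3\}$, so $W$ induces a $3$-colored $K_6$, contradicting the standing assumption on $G$.

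Combining the two steps leaves precisely $\{a,b\}\in\{\{4\},\{2,4\},\{5\},\{2,5\}\}$, which is the claim. The one point needing care is checking that the case split is genuinely exhaustive: every way the claim can fail must either make $\{u,u_3,u_4\}$ rainbow (this disposes of $\{4,5\}$ and of any pair containing $3$ alongside a color in $\{4,5\}$) or collapse the edges from $u$ into colors $\{1,2,3\}$ (this disposes of $\{2\}$, $\{3\}$, $\{2,3\}$). I would double-check in particular that the forced membership $c(uu_1),c(uu_2)\in\{1,a\}$ is read off correctly from the non-rainbow condition, and that both $a$ and $b$ lie in $\{2,3\}$ in each bad case so that $c(uu_3)$ and $c(uu_4)$ also stay inside $\{1,2,3\}$; everything else is a routine comparison against the already-fixed color pattern.
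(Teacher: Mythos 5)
Your proof is correct and follows essentially the same route as the paper: both work with the six-set $\{u_{10},u_1,u_2,u_3,u_4,u\}$, whose induced $K_5$ on the first five vertices uses only colors $1,2,3$, and both combine the no-rainbow-triangle condition (which kills the pairs $\{4,5\}$, $\{3,4\}$, $\{3,5\}$) with the no-$3$-colored-$K_6$ assumption (which kills $\{2\}$, $\{3\}$, $\{2,3\}$ after the Gallai condition forces $c(uu_1),c(uu_2)\in\{1,2,3\}$). The only difference is bookkeeping: the paper first excludes color $3$ and then excludes $C(u,\{u_3,u_4\})\subseteq\{2\}$, whereas you enumerate the admissible color pairs directly, which makes the exhaustiveness of the case split more transparent.
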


\begin{proof} First, we have $3\notin C(u, \{u_3, u_4\})$; otherwise it is easy to check that $C(\{u, u_1, u_2, u_3, u_4,$ $u_{10}\})=\{1, 2, 3\}$ in order to avoid a rainbow triangle, which is a contradiction. Second, we have $C(u, \{u_3, u_4\})\cap \{4, 5\}\neq \emptyset$; otherwise if $c(uu_3)=c(uu_4)=2$, then we also have $C(\{u, u_1, u_2, u_3, u_4,$ $u_{10}\})=\{1, 2, 3\}$. This contradiction completes the proof of Claim \ref{cl:B_2}.
\end{proof}

By Claim \ref{cl:B_2} and the pigeonhole principle, we may assume that $4\in C(u_i, \{u_3, u_4\})\subseteq \{2, 4\}$ for $i\in \{5, 6, 7\}$.

\begin{claim}\label{cl:B_3} For each $i\in \{5, 6, 7\}$, we have $c(u_i, \{u_3, u_4\})=4$.
\end{claim}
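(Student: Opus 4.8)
The plan is to argue by contradiction, assuming Claim~\ref{cl:B_3} fails for at least one index in $\{5,6,7\}$. Because $c(u_1u_3)=c(u_1u_4)=c(u_2u_3)=c(u_2u_4)=1$ and $c(u_3u_4)=2$, the whole configuration is invariant under interchanging $u_3$ and $u_4$, so after relabeling I may take the offending vertex to be $u_5$ with $c(u_5u_3)=2$ and $c(u_5u_4)=4$ (recall $4\in C(u_5,\{u_3,u_4\})\subseteq\{2,4\}$, so failure of the claim means exactly one of these two edges has color $2$). The first step is to locate the edges from $u_5$ to $S=\{u_1,u_2\}$: the rainbow-triangle-free condition applied to $u_1u_3u_5$ (colors $1,2,*$) and to $u_1u_4u_5$ (colors $1,4,*$) yields $c(u_1u_5)\in\{1,2\}\cap\{1,4\}=\{1\}$, and symmetrically $c(u_2u_5)=1$.

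With $c(u_1u_5)=c(u_2u_5)=1$, the set $\{u_1,u_2,u_3,u_5,u_{10}\}$ becomes a $3$-colored $K_5$ spanning exactly $\{1,2,3\}$ --- a faithful copy of the base configuration $\{u_1,u_2,u_3,u_4,u_{10}\}$ in which the color-$2$ edge $u_3u_5$ replaces $u_3u_4$. Since $c(u_4u_5)=4$, the vertex $u_4$ cannot be appended to this $K_5$ without introducing color $4$, so I would instead recruit a second special vertex, say $u_6$; the pigeonhole step supplies three such vertices $u_5,u_6,u_7$, guaranteeing a spare, and $u_6$ again satisfies $4\in C(u_6,\{u_3,u_4\})\subseteq\{2,4\}$.

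The heart of the argument is then a short case analysis driven by two recurring mechanisms: (a) extending a $3$-colored $K_5$ by a single vertex to produce a forbidden $3$-colored $K_6$, and (b) exhibiting a $2$-colored $K_5$ and invoking Lemma~\ref{le:Gallai} (a $2$-colored $K_5$ together with any additional vertex spans at most $3$ colors, hence a $3$-colored $K_6$). For example, when $c(u_6u_3)=2$ we have $c(u_6u_4)=4$, and the triangles $u_1u_3u_6$, $u_2u_3u_6$ give $c(u_1u_6),c(u_2u_6)\in\{1,2\}$; I then split on $c(u_5u_6)$. If $c(u_5u_6)\in\{1,2,3\}$ the set $\{u_1,u_2,u_3,u_5,u_6,u_{10}\}$ is a $3$-colored $K_6$; if $c(u_5u_6)=4$ then $\{u_4,u_5,u_6\}$ is monochromatic of color $4$, the rainbow triangle $u_1u_5u_6$ forces $c(u_1u_6)=1$, and $\{u_1,u_4,u_5,u_6,u_{10}\}$ is a $2$-colored $K_5$ in colors $\{1,4\}$; the remaining value $c(u_5u_6)=5$ is dispatched analogously, now reading off a $3$-colored $K_5$ on $\{u_1,u_3,u_5,u_6,u_{10}\}$ in colors $\{1,2,5\}$ and calling on the third special vertex $u_7$ as the extender. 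The symmetric possibility $c(u_6u_3)=4$ is treated the same way, first using rainbow-freeness on $u_3u_5u_6$ to constrain $c(u_5u_6)$.

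I expect the bookkeeping in this final case analysis to be the main obstacle. The colors of the internal edges $u_5u_6$ (and, where the third vertex is needed, $u_5u_7$, $u_6u_7$) and of their edges to $u_3,u_4$ are not pinned down in advance, so one must run methodically through the admissible patterns permitted by Claim~\ref{cl:B_2} and the rainbow-triangle-free condition. The reassuring point is that each branch closes via one of the two standard contradictions above, and the pigeonhole guarantee of three special vertices $u_5,u_6,u_7$ means an unused vertex is always on hand to play the role of extender.
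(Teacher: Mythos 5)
Your opening matches the paper exactly: the same reduction (by the $u_3\leftrightarrow u_4$ symmetry) to $c(u_5u_3)=2$, $c(u_5u_4)=4$, and the same forcing of $c(u_1u_5)=c(u_2u_5)=1$ by rainbow-freeness through $u_3$ and through $u_4$. Two of your branches are also correct: if $c(u_6u_3)=2$ and $c(u_5u_6)\in\{1,2,3\}$ then $\{u_1,u_2,u_3,u_5,u_6,u_{10}\}$ is indeed a $3$-colored $K_6$, and if $c(u_5u_6)=4$ the $2$-colored $K_5$ on $\{u_1,u_4,u_5,u_6,u_{10}\}$ plus Lemma~\ref{le:Gallai} works. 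But the proof is not closed, and the one concrete device you offer for the remaining branches fails. In the sub-case $c(u_6u_3)=2$, $c(u_6u_4)=4$, $c(u_5u_6)=5$, you propose extending the $3$-colored $K_5$ on $\{u_1,u_3,u_5,u_6,u_{10}\}$ (colors $\{1,2,5\}$) by $u_7$. Nothing controls $u_7$'s edges into this set: Claim~\ref{cl:B_2} only gives $c(u_7u_3)\in\{2,4\}$, and the assignment $c(u_7u_3)=2$, $c(u_7u_4)=4$, $c(u_7u_5)=c(u_7u_6)=3$, $c(u_1u_7)=c(u_2u_7)=1$ is consistent with every rainbow-triangle constraint, yet makes your extended set use the four colors $\{1,2,3,5\}$. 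A contradiction does exist in that configuration, but only via the entirely different set $\{u_{10},u_1,u_2,u_5,u_6,u_7\}$ (colors $\{1,3,5\}$), which your outline never produces. Similarly, the branch $c(u_6u_3)=4$ is only gestured at; none of the $5$- or $6$-sets you name contains $u_3$ and $u_4$ simultaneously, and without such a set that branch does not close.

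The idea you are missing is the one the paper leads with: if $c(u_iu_3)=4$ for some $i\in\{6,7\}$, then the six vertices $\{u_1,u_3,u_4,u_5,u_i,u_{10}\}$ span only the colors $\{1,2,4\}$ (all edges from $u_{10}$ are $1$; $u_1$'s edges are $1$ except $u_1u_i\in\{1,4\}$; $u_3u_4=u_3u_5=2$, $u_4u_5=4$, $u_3u_i=4$; and $u_4u_i,u_5u_i\in\{2,4\}$ by rainbow-freeness through $u_3$), a contradiction. The crucial feature is that this trap contains $u_3$ \emph{and} $u_4$ together with $u_1,u_{10}$ and two special vertices. This forces all three of $u_5,u_6,u_7$ to carry the pattern $c(u_iu_3)=2$, $c(u_iu_4)=4$, whence $c(\{u_1,u_2\},\{u_5,u_6,u_7\})=1$, and only the three internal edges of $\{u_5,u_6,u_7\}$ remain free. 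If one of them has color $2$ or $4$, the same $\{1,2,4\}$-trap inside $\{u_{10},u_1,u_3,u_4,u_5,u_6,u_7\}$ gives a $3$-colored $K_6$; otherwise all three lie in $\{3,5\}$ and $\{u_{10},u_1,u_2,u_5,u_6,u_7\}$ is a $3$-colored $K_6$ with colors $\{1,3,5\}$. Your vertex-by-vertex branching can in principle be completed along these lines, but as written the branches left open are precisely the heart of the claim.
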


\begin{proof} For a contradiction, suppose that $C(u_i, \{u_3, u_4\})=\{2, 4\}$ for some $i\in \{5, 6, 7\}$, say $c(u_5u_3)=2$ and $c(u_5u_4)=4$. Then $c(u_1u_5)=c(u_2u_5)=1$. If $c(u_6u_3)=4$, then $C(u_6, \{u_1, u_4, u_5\})\subseteq \{1, 2, 4\}$, which implies that $G[\{u_1, u_3, u_4, u_5, u_6, u_{10}\}]$ is a 3-colored $K_6$, a contradiction. Thus $c(u_6u_3)=2$, so $c(u_6u_4)=4$ since $4\in C(u_6, \{u_3, u_4\})$. By symmetry, we have $c(u_7u_3)=2$ and $c(u_7u_4)=4$. Then $c(\{u_1, u_2\}, \{u_6, u_7\})=1$. If $C(\{u_5, u_6, u_7\})\cap \{2, 4\}\neq \emptyset$, then $G[\{u_{10}, u_1, u_3, u_4, \ldots, u_7\}]$ contains a 3-colored $K_6$, a contradiction. Thus $C(\{u_5, u_6, u_7\})\subseteq \{3,5\}$. Then $G[\{u_{10}, u_1, u_2, u_5, u_6, u_7\}]$ is a 3-colored $K_6$, a contradiction.
\end{proof}

By Claim \ref{cl:B_3}, we have $c(\{u_5, u_6, u_7\}, \{u_3, u_4\})=4$. In order to avoid a rainbow triangle, we have $C(\{u_1, u_2\}, \{u_5, u_6, u_7\})\subseteq \{1, 4\}$. If $C(\{u_5, u_6, u_7\})\cap \{2, 4\}\neq \emptyset$ (resp., $3\in C(\{u_5, u_6, u_7\})$), then $G[\{u_{10}, u_1, u_3, u_4, \ldots, u_7\}]$ (resp., $G[\{u_{10}, u_1, u_2, u_4, u_5, u_6, u_7\}]$) contains a 3-colored $K_6$, a contradiction. Thus $c(\{u_5, u_6, u_7\})=5$. Then $G[\{u_{10}, u_{1}, u_4, u_5, u_6, u_7\}]$ is a 3-colored $K_6$. This contradiction completes the proof.


\begin{thebibliography}{99}

\bibitem{Axe} M. Axenovich, {\it A generalized Ramsey problem}, Discrete Math. {\bf 222} (2000), 247--249.

\bibitem{AxFM} M. Axenovich, Z. F\"{u}redi and D. Mubayi, {\it On generalized Ramsey theory: the bipartite case}, J. Combin. Theory, Ser. B {\bf 79} (2000), 66--86.

\bibitem{BeBu} G. Beam and M. Budden, {\it Weakened Gallai-Ramsey numbers}, Surv. Math. Appl. {\bf 13} (2018), 131--145.

\bibitem{BrES} W.G. Brown, P. Erd\H{o}s and V.T. S\'{o}s, {\it Some extremal problems on $r$-graphs}, in: New directions in the theory of graphs (Proc. Third Ann Arbor Conf., Univ. Michigan, Ann Arbor, Mich., 1971), Academic Press, New York, (1973), 53--63.

\bibitem{ChGr} F.R.K. Chung and R.L. Graham, {\it Edge-colored complete graphs with precisely colored subgraphs}, Combinatorica {\bf 3} (1983), 315--324.

\bibitem{CFLS1} D. Conlon, J. Fox, C. Lee and B. Sudakov, {\it On the grid Ramsey problem and related questions}, Int. Math. Res. Not. {\bf 2015} (2015), 8052--8084.

\bibitem{CFLS2} D. Conlon, J. Fox, C. Lee and B. Sudakov, {\it The Erd\H{o}s-Gy\'{a}rf\'{a}s problem on generalized Ramsey numbers}, Proc. London Math. Soc. {\bf 110} (2015), 1--18.

\bibitem{CoFS} D. Conlon, J. Fox, and B. Sudakov, {\it Recent developments in graph Ramsey theory}, in: Surveys in Combinatorics 2015, Cambridge Univ. Press, (2015), 49--118.

\bibitem{Erd1} P. Erd\H{o}s, {\it Problems and results on finite and infinite graphs}, in: Recent advances in graph theory (Proc. Second Czechoslovak Sympos., Prague, 1974), Academia, Prague, (1975), 183--192.

\bibitem{Erd2} P. Erd\H{o}s, {\it Solved and unsolved problems in combinatorics and combinatorial number theory}, Congr. Numer. {\bf 32} (1981), 49--62.

\bibitem{ErGy} P. Erd\H{o}s and A. Gy\'{a}rf\'{a}s, {\it A variant of the classical Ramsey problem}, Combinatorica {\bf 17} (1997), 459--467.

\bibitem{ErHR} P. Erd\H{o}s, A. Hajnal and R. Rado, {\it Partition relations for cardinal numbers}, Acta Math. Acad. Sci. Hungar. {\bf 16} (1965), 93--196.

\bibitem{ErLo} P. Erd\H{o}s and L. Lov\'{a}sz, {\it Problems and results on 3-chromatic hypergraphs and some related questions}, in: Infinite and Finite Sets (A. Hajnal, R. Rado, V.T. S\'{o}s eds.), Colloq. Math. Soc. J. Bolyai, North-Holland, (1975), 609--627.

\bibitem{ErSS} P. Erd\H{o}s, A. Simonovits and V.T. S\'{o}s, {\it Anti-Ramsey theorems}, in: Infnite and Finite Sets (A. Hajnal, R. Rado, V.T. S\'{o}s eds.), Colloq. Math. Soc. J. Bolyai, North-Holland, (1975), 633--643.

\bibitem{FiPS} S. Fish, C. Pohoata and A. Sheffer, {\it Local properties via color energy graphs and forbidden configurations}, SIAM J. Discrete Math. {\bf 34} (2020), 177--187.

\bibitem{FoGP} J. Fox, A. Grinshpun and J. Pach, {\it The Erd\H{o}s-Hajnal conjecture for rainbow triangles}, J. Combin. Theory, Ser. B {\bf 111} (2015), 75--125.

\bibitem{FoSu} J. Fox and B. Sudakov, {\it Ramsey-type problem for an almost monochromatic $K_4$}, SIAM J. Discrete Math. {\bf 23} (2009), 155--162.

\bibitem{Gallai} T. Gallai, {\it Transitiv orientierbare Graphen}, Acta Math. Acad. Sci. Hungar. {\bf 18} (1967), 25--66.

\bibitem{GrGl} R.E. Greenwood and A.M. Gleason, {\it Combinatorial relations and chromatic graphs}, Canadian J. Math. {\bf 7} (1955), 1--7.

\bibitem{GSSS} A. Gy\'{a}rf\'{a}s, G.N. S\'{a}rk\"{o}zy, A. Seb\H{o} and S. Selkow, {\it Ramsey-type results for Gallai colorings}, J. Graph Theory {\bf 64} (2010), 233--243.

\bibitem{GySi} A. Gy\'{a}rf\'{a}s and G. Simonyi, {\it Edge colorings of complete graphs without tricolored triangles}, J. Graph Theory {\bf 46} (2004), 211--216.
    
\bibitem{Kon} D. K\"{o}nig, {\it Gr\'{a}fok \'{e}s M\'{a}trixok}, Matematikai \'{e}s Fizikai Lapok {\bf 38} (1931), 116--119.

\bibitem{Kru} R.A. Krueger, {\it Generalized Ramsey numbers: forbidding paths with few colors}, Electron. J. Combin. {\bf 27} (2020), \#P1.44.

\bibitem{LSSZ} H. Lei, Y.T. Shi, Z.-X. Song and J.M. Zhang, {\it Gallai-Ramsey numbers of $C_{10}$ and $C_{12}$}, Australas. J. Combin. {\bf 79} (2021), 380--400.

\bibitem{LMSSS} H. Liu, C. Magnant, A. Saito, I. Schiermeyer and Y.T. Shi, {\it Gallai-Ramsey number for $K_4$}, J. Graph Theory {\bf 94} (2020), 192--205.

\bibitem{LiCh} Y.C. Liu and Y.J. Chen, {\it Gallai and $\ell$-uniform Ramsey numbers of complete bipartite graphs}, Discrete Appl. Math. {\bf 301} (2021), 131--139.

\bibitem{Mub} D. Mubayi, {\it Edge-coloring cliques with three colors on all 4-cliques}, Combinatorica {\bf 18} (1998), 293--296.

\bibitem{PoSh} C. Pohoata and A. Sheffer, {\it Local properties in colored graphs, distinct distances, and difference sets}, Combinatorica {\bf 39} (2019), 705--714.

\bibitem{Ros} V. Rosta, {\it Note on Gy. Elekes's conjectures concerning unavoidable patterns in proper colorings}, Electron. J. Combin. {\bf 7} (2000), \#N3.

\bibitem{SaSe1} G.N. S\'{a}rk\"{o}zy and S. Selkow, {\it On edge colorings with at least $q$ colors in every subset of $p$ vertices}, Electron. J. Combin. {\bf 8} (2001), \#R9.

\bibitem{SaSe2} G.N. S\'{a}rk\"{o}zy and S. Selkow, {\it An application of the regularity lemma in generalized Ramsey theory}, J. Graph Theory {\bf 44} (2003), 39--49.

\bibitem{Spe} J. Spencer, {\it Asymptotic lower bounds for Ramsey functions}, Discrete Math. {\bf 20} (1977), 69--76.

\bibitem{WMLC} Z. Wang, Y.P. Mao, H.Z. Li and S.P. Cui, {\it Asymptotic lower bounds for Gallai-Ramsey functions and numbers}, arXiv:2007.04895v1 (2020).

\end{thebibliography}
\end{document}